\theoremstyle{plain}
\newtheorem{theorem}{Theorem}[section]		
\newtheorem{lemma}[theorem]{Lemma}
\newtheorem{proposition}[theorem]{Proposition}
\newtheorem{corollary}[theorem]{Corollary}
\newtheorem{conjecture}[theorem]{Conjecture}
\newtheorem{problem}[theorem]{Problem}
\newtheorem{definition}[theorem]{Definition}
\theoremstyle{remark}
\def\FF{\mathcal{F}}
\def\HH{\mathcal{H}}
\def\GG{\mathcal{G}}
\def\E{\mathbb{E}}
\newcommand{\eps}{\ensuremath{\varepsilon}}
\let\emptyset\varnothing
\let\originalleft\left
\let\originalright\right
\renewcommand{\left}{\mathopen{}\mathclose\bgroup\originalleft}
\renewcommand{\right}{\aftergroup\egroup\originalright}
\def\imod#1{\allowbreak\mkern10mu({\operator@font mod}\,\,#1)}
\title{Tiling with monochromatic bipartite graphs of bounded maximum degree}
\author{Ant\'onio Gir\~ao}
\thanks{AG: Institut f\"ur Informatik, Universit\"at Heidelberg, Germany. E-mail: {\nolinkurl{a.girao@informatik.uni-heidelberg.de}}. Research supported by Deutsche Forschungsgemeinschaft (DFG, German Research Foundation) under Germany’s Excellence Strategy EXC-2181/1 - 390900948 (the Heidelberg STRUCTURES Cluster of Excellence)}
\author{Oliver Janzer}
\thanks{OJ: Department of Mathematics, ETH Z\"urich, Switzerland. Email:  {\nolinkurl{oliver.janzer@math.ethz.ch}}.
Research supported by an ETH Z\"urich Postdoctoral Fellowship 20-1 FEL-35.}
\begin{document}

\maketitle

\begin{abstract}
    We prove that for any $r\in \mathbb{N}$, there exists a constant $C_r$ such that the following is true. Let $\mathcal{F}=\{F_1,F_2,\dots\}$ be an infinite sequence of bipartite graphs such that $|V(F_i)|=i$ and $\Delta(F_i)\leq \Delta$ hold for all~$i$. Then in any $r$-edge coloured complete graph $K_n$, there is a collection of at most $\exp(C_r\Delta)$ monochromatic subgraphs, each of which is isomorphic to an element of $\mathcal{F}$, whose vertex sets partition $V(K_n)$. This proves a conjecture of Corsten and Mendon\c{c}a in a strong form and generalizes results on the multicolour Ramsey numbers of bounded-degree bipartite graphs.
\end{abstract}

\section{Introduction}

The problem of partitioning the vertex set of edge-coloured complete graphs into a small number of certain monochromatic pieces has a very rich history; see \cite{Gyar16} for a recent survey. An early example of a problem of this kind is Lehel's conjecture \cite{Ayel79}. The conjecture states that any 2-edge coloured complete graph $K_n$ contains a red and a blue cycle whose vertex sets partition $V(K_n)$. Here, the empty graph, singletons and edges are regarded as cycles. This conjecture was proved by \L uczak, R\"odl and Szemer\'edi \cite{LRSz98} for large $n$. Later, Allen~\cite{All08} significantly improved the bound on $n$. Finally, Bessy and Thomass\'e \cite{BT10} proved Lehel's conjecture for all $n$.

Regarding more colours, Erd\H os, Gy\'arf\'as and Pyber \cite{EGyP91} proved that the vertex set of any $r$-edge coloured complete graph can be partitioned to $O(r^2\log r)$ monochromatic cycles and conjectured that in fact $r$ monochromatic cycles should be enough. However, this was disproved by Pokrovskiy \cite{Pok14}. The current best known upper bound is $O(r\log r)$, due to Gy\'arf\'as, Ruszink\'o, S\'ark\"ozy and Szemer\'edi \cite{GyRSSz06}.

Grinshpun and S\'ark\"ozy \cite{GS16} considered the more general problem where we want to partition the vertex set of the complete graph to graphs chosen from a fixed bounded-degree sequence. Let $\FF=\{F_1,F_2,\dots\}$ be a sequence of graphs. A \emph{monochromatic $\FF$-tiling} of size $s$ of an edge coloured complete graph $K_n$ is a collection of $s$ monochromatic subgraphs, each isomorphic to an element of $\mathcal{F}$, whose vertex sets partition $V(K_n)$. The \emph{$r$-colour tiling number} of $\FF$, denoted by $\tau_r(\FF)$, is the minimal $s$ such that every $r$-edge coloured complete graph has a monochromatic $\FF$-tiling of size at most $s$ (if no such $s$ exists, then we let $\tau_r(\FF)=\infty$). Call $\mathcal{F}=\{F_1,F_2,\dots\}$ a \emph{$\Delta$-bounded graph sequence} if $v(F_i)=i$ and $\Delta(F_i)\leq \Delta$ for all $i$. (Here and below, $v(G)$ denotes the number of vertices in $G$ and $\Delta(G)$ denotes the maximum degree of $G$.) Also, call $\mathcal{F}=\{F_1,F_2,\dots\}$ a \emph{bipartite $\Delta$-bounded graph sequence} if in addition each $F_i$ is bipartite.

Grinshpun and S\'ark\"ozy gave an upper bound for the $2$-colour tiling number of $\Delta$-bounded graph sequences.

\begin{theorem}[Grinshpun--S\'ark\"ozy \cite{GS16}] \label{thm:2-colour tiling}
    There exists an absolute constant $C$ such that for any $\Delta\geq 2$ and any $\Delta$-bounded graph sequence $\FF$, $\tau_2(\FF)\leq \exp(C\Delta \log \Delta)$.
\end{theorem}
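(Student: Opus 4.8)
The plan is to run the regularity method: combine Szemer\'edi's regularity lemma with the blow-up lemma to reduce the tiling problem to a structural statement about $2$-edge-coloured almost-complete graphs, and then invoke the linear upper bound on the Ramsey number of bounded-degree graphs to handle the exceptional vertices. I will treat an arbitrary $\Delta$-bounded sequence $\FF$; the only feature of $\FF$ that will matter is that $F_m$ has a proper colouring into at most $\Delta+1$ classes for every $m$.

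\textbf{Regularization.} I would apply the regularity lemma to the $2$-edge-coloured $K_n$ with a parameter $\e=\e(\Delta)$ small enough for the blow-up lemma, obtaining an exceptional set $V_0$ with $|V_0|\le \e n$ and clusters $V_1,\dots,V_k$, $k$ bounded in terms of $\e$. On the clusters I form a $2$-edge-coloured reduced graph $\RR$ by keeping a pair $V_iV_j$, coloured with a majority colour, precisely when it is $\e$-regular with density at least $2\e$ in that colour; then $\RR$ is a $2$-edge-coloured graph on $k$ vertices missing only $o(k^2)$ edges, and a monochromatic copy of $F_m$ sitting inside a suitably blown-up host in $\RR$ will lift, via the blow-up lemma, to a monochromatic copy of $F_{m'}$ in $K_n$ with $m'\approx m\cdot n/k$.

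\textbf{The structural decomposition (the crux).} The core is to show that $V(\RR)$ can be partitioned into $t=\exp(O(\Delta\log\Delta))$ parts $C_1,\dots,C_t$ so that each $C_j$ carries a \emph{monochromatic universal host}: a colour $c_j$ and a partition of $C_j$ into at most $\Delta+1$ sub-parts that are pairwise $\e$-regular with density at least $2\e$ in colour $c_j$ (a ``blow-up of $K_{\le\Delta+1}$'' in colour $c_j$), whose sub-part sizes, after passing back to vertices, can be tuned to equal the colour-class sizes of $F_{m_j}$ for a suitable $m_j$. I would try to prove this by repeated extraction: in the current set of clusters, if one colour is ``locally very dense'' on a constant fraction of it --- say the red subgraph on some $S'$ has minimum degree at least $\left(1-\tfrac1{\Delta+1}+\delta\right)|S'|$ --- then a second application of regularity together with a robust form of the Hajnal--Szemer\'edi theorem produces inside $S'$ a monochromatic (red) blow-up of $K_{\Delta+1}$ with flexible part sizes spanning $S'$, which I extract as one of the $C_j$; then I recurse on what remains. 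The essential difficulty, and the step I expect to be the main obstacle, is that density $\tfrac12$ in a colour does \emph{not} force any such dense spot (a quasirandom colouring is balanced on every large subset), so the extraction must be driven by a more delicate invariant; controlling how the colour statistics evolve, and in particular bounding the number of rounds --- hence $t$, hence the constant in the theorem --- by $\exp(O(\Delta\log\Delta))$ rather than by something depending on $k$ (which would be tower-type in $\Delta$), is where the real work lies. For the bipartite sub-case the hosts collapse to blow-ups of $K_2$, i.e.\ monochromatic super-regular pairs, which is a useful warm-up.

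\textbf{Embedding and cleanup.} Given the decomposition, for each $j$ I would delete a $2\e$-fraction from each sub-part of $C_j$ to upgrade $\e$-regularity to super-regularity, so that the union of the clusters in the $i$-th sub-part becomes a genuine part of a monochromatic super-regular $(\le\Delta+1)$-partite host in colour $c_j$ with the prescribed sizes; the blow-up lemma then embeds a monochromatic copy of $F_{m_j}$ spanning exactly the used vertices of $C_j$. The freedom to choose the $m_j$ and to shift a bounded number of clusters between parts lets me make the $m_j$ hit genuine indices of the sequence and sum to all but an $O(\e)$-fraction of $V(K_n)$. It then remains to cover the leftover --- $V_0$, clusters omitted by the decomposition, super-regularity slack and rounding errors, altogether of size $O(\e n)$. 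Here I would either reserve a random linear-sized set before regularizing and absorb the leftover into it using the linear bound on the Ramsey number of $\Delta$-bounded graphs (which is itself $\exp(O(\Delta\log\Delta))\,m$, conveniently of the right order), or keep a bounded number of the hosts $C_j$ --- some of each colour --- deliberately oversized, so that every leftover vertex, being dense in one colour towards an appropriately coloured host, can be swallowed into it and tiled along with it by a single larger member of $\FF$. Either way the cleanup costs $O(1)$ further pieces, and the total is $\exp(O(\Delta\log\Delta))$, as required.
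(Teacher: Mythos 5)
First, a point of attribution: the manuscript you are working against does not prove Theorem~\ref{thm:2-colour tiling}. It is stated as a quoted result of Grinshpun and S\'ark\"ozy \cite{GS16} and used only as background, so there is no in-paper proof to compare your argument against line by line. The only useful question is whether your sketch, on its own terms, amounts to a proof.

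It does not, and you yourself identify exactly where it breaks. The entire weight of your argument rests on the ``structural decomposition'' claim: that $V(\RR)$ can be partitioned into $t=\exp(O(\Delta\log\Delta))$ pieces, each carrying a monochromatic blow-up of $K_{\le\Delta+1}$ with tunable part sizes. Everything before (regularization) and after (blow-up embedding, cleanup via the Ramsey bound or via deliberately oversized hosts) is standard machinery once that decomposition is in hand. But the decomposition is never established. The extraction rule you propose---look for a colour that is locally very dense on a constant fraction of the current set---is precisely the one you then correctly observe cannot work in general, since a quasirandom $2$-colouring exhibits no such dense spot on any large subset. You label this ``the main obstacle'' and ``where the real work lies,'' which is accurate, but it means the proposal is missing its central lemma: a concrete invariant that drives the extraction and keeps the number of rounds at $\exp(O(\Delta\log\Delta))$, independent of the tower-type number of regularity clusters. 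Without that, what you have is a plausible strategic outline in the general spirit of how such tiling theorems are proved (regularity lemma, reduced graph, monochromatic multipartite hosts, blow-up lemma, Ramsey-type cleanup), not a proof of the statement.
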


They also gave a better bound for the $2$-colour tiling number of bipartite $\Delta$-bounded sequences.

\begin{theorem}[Grinshpun--S\'ark\"ozy \cite{GS16}] \label{thm:2-colour bipartite tiling}
    There exists an absolute constant $C$ such that for any bipartite $\Delta$-bounded graph sequence $\FF$, $\tau_2(\FF)\leq \exp(C\Delta)$.
\end{theorem}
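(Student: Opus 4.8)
The plan is to run the regularity method. First I would apply Szemer\'edi's regularity lemma to the $2$-edge-coloured $K_n$, obtaining an $\epsilon$-regular partition $V_1,\dots,V_k$ (with $k$ bounded and $\epsilon$ small, both chosen as functions of $\Delta$ alone) together with a small exceptional set $V_0$, and then build a reduced $2$-coloured (almost-)complete graph $R$ on the clusters by colouring the pair $ij$ with a colour in which $(V_i,V_j)$ is $\epsilon$-regular of density at least $\tfrac12$ (all but at most $\epsilon\binom{k}{2}$ pairs receive a colour). The aim is to cover all but a negligible proportion of the clusters of $R$ by a bounded family of monochromatic ``blocks'', each of which, after the relevant pairs are made super-regular, can host a bipartite graph of maximum degree $\Delta$ of essentially the same order; the leftover vertices (those in $V_0$, those discarded in super-regularisation, and those in uncovered clusters) are then absorbed either by slightly enlarging the blocks or by a reserved absorbing gadget.

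The heart of the argument -- and what I expect to be the main obstacle -- is the structural statement that the clusters of $R$ can be partitioned into $\exp(O(\Delta))$ parts, each monochromatic and ``blow-up-flexible'' for bipartite $\Delta$-bounded graphs. The subtlety is that, unlike the case of paths and cycles (which have bandwidth $1$, for which a monochromatic connected matching in $R$ suffices), a bipartite graph of maximum degree $\Delta$ may have bandwidth linear in its order, and such a graph does \emph{not} embed into the blow-up of a sparse or path-like structure with small clusters. The blocks must therefore be robustly dense: for instance, each block should contain a monochromatic super-regular pair spanning a positive fraction of it, or be non-bipartite with minimum degree close to half its order, so that the Koml\'os--S\'ark\"ozy--Szemer\'edi blow-up lemma applies with a single edge or a triangle of the block as the target of a homomorphism from the bipartite graph. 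To obtain a bounded family of such blocks covering almost all of $R$, I would iteratively extract from the still-uncovered part of $R$ a largest monochromatic robust block -- using that the majority colour is dense on any large vertex set, together with a stability/Ramsey-type increment guaranteeing that each extracted block covers a definite fraction -- and show that $\exp(O(\Delta))$ rounds suffice before only an $\epsilon$-fraction of clusters remains. This is where the exponential dependence on $\Delta$ enters, mirroring a bound of the form $r_2(F)\le 2^{O(\Delta)}v(F)$ for the $2$-colour Ramsey number of a bipartite $\Delta$-bounded graph $F$.

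Given the blocks, the remaining steps are comparatively standard. For each block I would pass to genuinely super-regular pairs and then distribute the exceptional and leftover vertices, appending each such vertex to a side of some block in which it has many neighbours in that block's colour (a simple averaging argument produces such a side), maintaining regularity. Since every bipartite graph admits a homomorphism onto a single edge, the blow-up lemma embeds into each (super-regularised, slightly enlarged) block a monochromatic bipartite $\Delta$-bounded graph of any prescribed order up to roughly the block's size and with part sizes matching the block's proportions -- splitting a block into a bounded number of such graphs when a single one does not fit the proportions. Finally, because $\FF$ contains a graph $F_i$ of \emph{every} order $i$, I can select the orders of the pieces greedily so that they sum to exactly $n$, which disposes of all divisibility issues; any small residual set that survives is swallowed by an absorbing structure reserved at the outset -- a bounded collection of monochromatic super-regular pairs of every ``colour profile'', engineered so that it can be re-tiled by elements of $\FF$ with or without any small compatible set of extra vertices. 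Summing the contributions of all phases, the number of monochromatic pieces used is $\exp(O(\Delta))$, which gives the theorem.
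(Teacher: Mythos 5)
This statement is cited from Grinshpun--S\'ark\"ozy \cite{GS16} and is not reproved in the paper; the paper instead establishes the $r$-colour generalization (Theorem~\ref{thm:tiling with bipartite sequence}) via an argument that deliberately avoids the regularity lemma. Your proposal therefore takes a genuinely different route from what the paper does: you run Szemer\'edi regularity plus the blow-up lemma, whereas the paper works directly in the coloured complete graph using dependent random choice (Lemmas~\ref{lem:dependent random choice}--\ref{lem:dependentrc for pairs}), a density Ramsey estimate (Proposition~\ref{prop:density ramsey}), hypergraph embedding with random images (Lemmas~\ref{lem:embed hypergraph}--\ref{lem:embed hypergraph carefully}), and a bespoke absorber built inside a single monochromatic copy of $F\in\FF$ (Definition~\ref{def:absorber} and Lemmas~\ref{lem:good partition absorbs}, \ref{lem:switch matching}). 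Avoiding regularity is exactly what lets the paper reach the single-exponential bound cleanly; your route is not automatically barred from reaching it, but the details become delicate.

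The most serious gap in your proposal is the absorption step. You correctly identify the central obstacle -- unlike paths and cycles, a bipartite graph of maximum degree $\Delta$ can have linear bandwidth, so a monochromatic connected matching in the reduced graph is useless -- but you dispose of the leftover vertices with ``a bounded collection of monochromatic super-regular pairs of every colour profile, engineered so that it can be re-tiled by elements of $\FF$ with or without any small compatible set of extra vertices.'' For general bounded-degree bipartite graphs this engineering is the heart of the problem and cannot be waved through: an absorber must be able to incorporate an arbitrary small residual set $Z$, whose colour structure toward the absorber you do not control, while producing a tiling by members of $\FF$ of the \emph{exact} residual union. The paper's solution is a switching mechanism: inside a single red copy of some $F=(X,Y)\in\FF$, one builds subsets $Y_1,\dots,Y_t\subset Y$ with the property (Definition~\ref{def:absorber}\,(\ref{cond:switching})) that any $y_0,y_3\in Y_i$ are joined by many disjoint ``red switching paths'' $(y_1,y_2)$, which allows $F$ to expel some of its own $Y$-vertices and swallow $Z$ in their place (Lemma~\ref{lem:switch matching}). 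Nothing analogous appears in your sketch, and a generic ``super-regular pair of each colour profile'' does not supply such a replacement mechanism.

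A second, lesser gap is the iterative extraction of ``robust blocks'' covering a $2^{-O(\Delta)}$ fraction of the remaining clusters per round. The assertion is plausible and in the right spirit -- it is the reduced-graph shadow of the Fox--Sudakov bound $R_2(H)\le 2^{O(\Delta)}v(H)$ -- but to actually locate, inside a $2$-coloured almost-complete graph, a large set on which one colour is dense and simultaneously robust enough for blow-up-lemma embedding (and to verify that the block proportions can be adjusted to match the parts of the target $F_i$), you would in effect need the same dependent-random-choice machinery the paper applies directly at the vertex level. At that point the regularity wrapper is not buying you anything, while the choice $\eps\approx 2^{-\Delta}$ forces a tower-type number of clusters that you must be careful never leaks into the count of tiles. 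In short: correct diagnosis of the difficulty, reasonable high-level plan, but the absorber is the crux and it is missing.
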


Moreover, they showed that Theorem \ref{thm:2-colour bipartite tiling} is tight.

\begin{theorem}[Grinshpun--S\'ark\"ozy \cite{GS16}] \label{thm:tiling lower bound}
    There is an absolute constant $c>0$ such that for any $\Delta$ there exists a bipartite $\Delta$-bounded graph sequence $\FF$ with $\tau_2(\FF)\geq \exp(c\Delta)$.
\end{theorem}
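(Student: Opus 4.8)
The plan is to construct, for each $\Delta$, a single bipartite $\Delta$-bounded sequence $\FF$ together with a $2$-colouring of a complete graph $K_n$ in which there is no monochromatic copy of any \emph{large} member of $\FF$; then any monochromatic $\FF$-tiling of $K_n$ is forced to use only tiles on fewer than $2\Delta$ vertices, and hence must have more than $n/(2\Delta)$ parts. To make a single colouring defeat the large tiles all at once, I would arrange that every $F_i$ with $i\ge 2\Delta$ contains a fixed ``Ramsey-hard'' bipartite graph $H$ with $\Delta(H)\le\Delta$ as a subgraph; for large $\Delta$ the natural choice is $H=K_{\Delta,\Delta}$, which the upper bound in Theorem~\ref{thm:2-colour bipartite tiling} identifies as the expected extremal example.

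In detail, fix $\Delta\ge\Delta_0$ for a suitable absolute constant $\Delta_0$ and set $n=\lfloor 2^{\Delta/3}\rfloor$. A first-moment bound over a uniformly random $2$-colouring of $E(K_n)$ gives expected number of monochromatic copies of $K_{\Delta,\Delta}$ at most $n^{2\Delta}\cdot 2^{\,1-\Delta^2}\le 2^{\,1-\Delta^2/3}<1$, so there is a $2$-colouring $\chi$ of $K_n$ with no monochromatic $K_{\Delta,\Delta}$. Now define $\FF=\{F_1,F_2,\dots\}$: for $i<2\Delta$ let $F_i$ be the edgeless graph on $i$ vertices, and for $i\ge 2\Delta$ let $F_i$ be a copy of $K_{\Delta,\Delta}$ together with $i-2\Delta$ isolated vertices. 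Each $F_i$ is bipartite on exactly $i$ vertices with maximum degree at most $\Delta$, so $\FF$ is a bipartite $\Delta$-bounded graph sequence.

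In $(K_n,\chi)$ there is no monochromatic subgraph isomorphic to any $F_i$ with $i\ge 2\Delta$, since such a subgraph contains a monochromatic copy of $K_{\Delta,\Delta}$. Therefore every tile of a monochromatic $\FF$-tiling of $(K_n,\chi)$ is isomorphic to some $F_i$ with $i<2\Delta$ and so spans at most $2\Delta-1$ vertices; as the tiles partition $V(K_n)$, their number exceeds $n/(2\Delta)\ge 2^{\Delta/3}/(2\Delta)$, which is at least $\exp(c\Delta)$ for a suitable absolute constant $c>0$ provided $\Delta$ is large enough. Hence $\tau_2(\FF)\ge\exp(c\Delta)$.

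Finally, for $\Delta<\Delta_0$, shrinking $c$ we may assume $\exp(c\Delta)<2$, so it is enough to exhibit for each such $\Delta$ a bipartite $\Delta$-bounded sequence with $\tau_2\ge 2$. Take $F_i$ edgeless for $i<4$ and, for $i\ge 4$, a copy of $2K_2$ together with $i-4$ isolated vertices; colour $K_4$ so that one colour class is a star $K_{1,3}$ and the other is a triangle, neither of which contains two independent edges. Then $K_4$ has no monochromatic tile on at least $4$ vertices and therefore needs at least two tiles, so $\tau_2\ge 2$; this works for every $\Delta\ge 1$, and $\Delta=0$ is trivial. I do not expect a genuine obstacle: the content of the theorem is the choice of $\FF$, and the only point requiring care — that one colouring must simultaneously rule out tiles of \emph{every} large size — is precisely what forcing each large $F_i$ to contain the same graph $K_{\Delta,\Delta}$ takes care of; the rest is the routine first-moment estimate above.
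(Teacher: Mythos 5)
The paper states Theorem~\ref{thm:tiling lower bound} as a cited result of Grinshpun and S\'ark\"ozy \cite{GS16} and gives no proof of its own, so there is nothing internal to compare against; I am therefore only assessing your argument on its merits. Your proof is correct, and it captures exactly the mechanism behind the original lower bound: pick a single bipartite graph $H$ on $2\Delta$ vertices with maximum degree $\Delta$ whose two-colour Ramsey number is exponential in $\Delta$, force every $F_i$ with $i\ge 2\Delta$ to contain $H$ (and hence to be unusable), and conclude that any tiling of an $H$-free colouring of $K_n$ with $n\approx 2^{c\Delta}$ must consist of tiles on $O(\Delta)$ vertices. Taking $H=K_{\Delta,\Delta}$ is the clean explicit choice; the first-moment estimate for $n=\lfloor 2^{\Delta/3}\rfloor$ is right (it needs $\Delta\ge 2$, which your $\Delta_0$ threshold absorbs), the sequence $\FF$ is a genuine bipartite $\Delta$-bounded sequence, and the small-$\Delta$ fallback via the $K_{1,3}$/triangle colouring of $K_4$ with $F_4\cong 2K_2$ closes the remaining cases. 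The one thing worth stating explicitly (you clearly understand it, but it is the crux) is why a single colouring suffices against infinitely many forbidden tiles: because all $F_i$ with $i\ge 2\Delta$ share $K_{\Delta,\Delta}$ as a subgraph, excluding monochromatic $K_{\Delta,\Delta}$ simultaneously excludes them all, which is why this works for a tiling lower bound and not merely a Ramsey lower bound for one graph.
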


Regarding more colours, they made the following conjecture.

\begin{conjecture}[Grinshpun--S\'ark\"ozy \cite{GS16}] \label{con:general tiling}
    For every positive integer $r$ there exists a constant $C_r$ such that for any $\Delta\geq 2$ and any $\Delta$-bounded graph sequence $\FF$, $\tau_r(\FF)\leq \exp(\Delta^{C_r})$.
\end{conjecture}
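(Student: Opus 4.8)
The plan is to run the multicolour regularity method, reduce Conjecture~\ref{con:general tiling} to a Ramsey-type structural statement about $r$-colourings of complete graphs, and then finish by absorption. Let me first note why one should not expect to invoke a bipartite tiling result as a black box: the clique $K_{\Delta+1}$ is itself $\Delta$-bounded but is as far from bipartite as possible, so some member of a $\Delta$-bounded sequence may force a monochromatic $K_{\Delta+1}$-blow-up into which no bipartite structure can be mapped; the general case therefore genuinely requires $(\Delta+1)$-chromatic host structures.

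Apply an $r$-colour version of Szemer\'edi's regularity lemma to $K_n$ to obtain $V(K_n)=V_0\cup V_1\cup\dots\cup V_N$ with $|V_0|=o(n)$, all $V_a$ ($a\geq1$) of a common size $m$, and every pair $\e$-regular in every colour; form the reduced $r$-coloured complete graph $R$ on $[N]$ by colouring $ab$ by a colour in which $(V_a,V_b)$ has density at least $1/r$. The combinatorial core is the claim I would aim to establish: there is $g_r$ with $g_r(\Delta)\leq\exp(\Delta^{C_r})$ such that the vertex set of any $r$-edge-coloured complete graph can be partitioned into at most $g_r(\Delta)$ parts $S_1,\dots,S_t$ so that each $S_j$ has a colour $c_j$ for which the colour-$c_j$ graph induced on $S_j$ is connected and \emph{robustly $K_{\Delta+1}$-tileable}: it carries a perfect fractional $K_{\Delta+1}$-tiling, every vertex lies in many copies of $K_{\Delta+1}$, and (the point of ``robustly'') these features pass to any sufficiently dense $\e$-regular blow-up. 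I expect this to be the main obstacle. The natural approach is induction on $r$: if a colour class is globally structured --- behaving like a blow-up of a $(\Delta+1)$-partite graph on few classes --- peel off the vertex classes witnessing this and delete that colour, decreasing $r$; otherwise each colour of density $\geq 1/r$ contains a robustly $K_{\Delta+1}$-tileable piece on a constant fraction of its support (here a monochromatic $K_{\Delta+1}$ must be produced, so a Ramsey factor $R_r(K_{\Delta+1})=\exp(O_r(\Delta))$ is paid), which one extracts before recursing on the remainder. Tracking the accumulated error and, crucially, keeping the $\Delta$-dependence of $g_r$ down to $\exp(\Delta^{C_r})$ is the delicate part; the random $r$-colouring --- where each colour class is pseudorandom of density $1/r$ and hence, by the Johansson--Kahn--Vu threshold for $K_{\Delta+1}$-factors lying far below any constant density, is already robustly $K_{\Delta+1}$-tileable on its own --- indicates that ``dense in some colour'' is the right winning condition and that $K_{\Delta+1}$ cannot be weakened.

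Granting this, put $W_j=\bigcup_{a\in S_j}V_a$. Using the fractional $K_{\Delta+1}$-tiling of $R[S_j]$ together with the proportional-splitting trick (refine the regularity partition so that cluster sizes realise the fractional weights), cover all but an $o(1)$-fraction of $W_j$ by vertex-disjoint monochromatic colour-$c_j$ blow-ups of $K_{\Delta+1}$ with nearly equal parts, linked via the connectivity of $R[S_j]$ into a single robust $(\Delta+1)$-partite-like monochromatic structure; into it I would embed one member of $\FF$ of the appropriate order by the blow-up lemma, using the equitable $(\Delta+1)$-colouring (Hajnal--Szemer\'edi) of that member to match its colour classes to the blow-up parts and using that $\FF$ contains a graph of every order. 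This spends $O(1)$ members of $\FF$ per $S_j$, at most $\exp(\Delta^{C_r})$ in all, and covers $(1-o(1))n$ vertices. The remaining $o(n)$ vertices ($V_0$, the uncovered fractions inside the $W_j$, and discarded clusters) are handled by a standard absorption argument: before the main cover, reserve a small absorbing subgraph inside one robustly $K_{\Delta+1}$-tileable monochromatic piece, built so that for every admissible small set $L$ the union of $L$ with the absorber admits a monochromatic $\FF$-tiling of size $O(1)$; feeding the leftover in at the end costs only $O(1)$ further members. (Merely iterating the cover would lose a $\log n$ factor, which is why the absorber is needed.) Together this yields $\tau_r(\FF)\leq\exp(\Delta^{C_r})$.
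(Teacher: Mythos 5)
\textbf{This statement is Conjecture~\ref{con:general tiling}, not a theorem of the paper.} The paper does not prove it; it remains open. The authors explicitly state that they prove only the \emph{bipartite} case (Theorem~\ref{thm:tiling with bipartite sequence}, giving $\exp(C_r\Delta)$ for bipartite $\Delta$-bounded sequences), and that the best known bound for general $\Delta$-bounded sequences is the triple-exponential $\exp\bigl(\exp\bigl(r^{Cr\Delta^3}\bigr)\bigr)$ of Corsten and Mendon\c{c}a (Theorem~\ref{thm:r-colour tiling}). There is therefore no proof in the paper against which to compare your sketch, and any correct argument you supplied would constitute a substantial new result.

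As for the sketch itself, it is a reasonable outline in the spirit of Corsten--Mendon\c{c}a (regularity, reduced graph, monochromatic tileable structure, absorption), but its entire weight rests on the unproved ``combinatorial core'': that the vertex set of every $r$-edge-coloured complete graph can be partitioned into at most $\exp(\Delta^{C_r})$ pieces, each connected and \emph{robustly $K_{\Delta+1}$-tileable} in a single colour. You acknowledge this as ``the main obstacle'' and then grant it, but it is precisely where all of the difficulty lives: the Corsten--Mendon\c{c}a argument does establish a partition of this general flavour, and the price they pay for it is a triple-exponential number of parts. Your proposed inductive scheme (peel off a structured colour or extract a dense robustly tileable piece) gives no mechanism for controlling the number of parts better than they do; the Ramsey factor $R_r(K_{\Delta+1})=\exp(\Theta_r(\Delta))$ you mention is not the bottleneck, the recursion depth and error accumulation are, and you do not address them. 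Likewise, ``a standard absorption argument'' that can swallow an arbitrary $o(n)$-sized leftover using $O(1)$ members of $\FF$ is exactly the technical content one must build from scratch (the paper's Subsections~\ref{sec:absorption process}--\ref{sec:good subgraph} are devoted to constructing such an absorber just for the bipartite case, and the construction does not transfer verbatim to $(\Delta+1)$-chromatic hosts). Your heuristic via Johansson--Kahn--Vu thresholds for random colourings suggests the \emph{target structure} is right, but gives no route to the quantitative bound. In short: the approach is not wrong-headed, but the two load-bearing steps are asserted rather than proved, and closing them would mean resolving an open conjecture that the paper deliberately leaves for future work.
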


Recently, Corsten and Mendon\c{c}a established the finiteness of $\tau_r(\FF)$ for $\Delta$-bounded graph sequences and proved a triple exponential upper bound in $\Delta$.

\begin{theorem}[Corsten--Mendon\c{c}a \cite{CM21}] \label{thm:r-colour tiling}
    There exists an absolute constant $C$ such that for every $\Delta$-bounded graph sequence $\FF$, $\tau_r(\FF)\leq \exp\left(\exp\big(r^{Cr\Delta^3}\big)\right)$.
\end{theorem}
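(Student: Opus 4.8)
The plan is to run the regularity method. Fix constants $1/M_0 \ll \eps \ll \gamma \ll 1/(r\Delta)$ and apply the $r$-colour Szemer\'edi regularity lemma (degree form) to the $r$-edge-coloured $K_n$, obtaining a partition $V(K_n)=V_0\cup V_1\cup\dots\cup V_M$ with $|V_0|\le\eps n$, equal cluster sizes $|V_1|=\dots=|V_M|=:L$, and all but at most $\eps M^2$ of the pairs $(V_i,V_j)$ being $\eps$-regular in each of the $r$ colours. I would build a reduced graph $R$ on $[M]$: keep the edge $ij$ when $(V_i,V_j)$ is $\eps$-regular in every colour, and give it a colour whose density on that pair is at least $1/r$. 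As $M\le M_0$ depends only on $r$ and $\Delta$, the theorem reduces to three tasks: (i) decompose all but a tiny fraction of $V(R)$ into at most $N(r,\Delta)$ monochromatic pieces of a shape ``rich enough'' to host spanning bounded-degree graphs after blow-up; (ii) turn each such piece into a tile of $K_n$; (iii) absorb $V_0$ together with the vertices lost in (i) and (ii) into a bounded number of further tiles, without letting their number grow with $M$.

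The heart of the matter is task (i). What I want is: the vertex set of any $r$-edge-coloured (almost) complete graph on $M$ vertices can be partitioned into at most $N(r,\Delta)$ classes $P_1,\dots,P_t$, each carrying a colour $c_i$ such that the colour-$c_i$ graph on $P_i$ is connected and, in fact, contains a spanning connected subgraph $H_i$ that is rich in the following sense: for every bipartite $G$ with $\Delta(G)\le\Delta$ and $v(G)=|P_i|\cdot L$ there is a homomorphism $G\to H_i$ with all fibres of size exactly $L$ (and, for non-bipartite members of $\FF$, that $H_i$ contains a spanning blow-up of $K_{\Delta+1}$). The natural approach is a recursion driven by Gy\'arf\'as's theorem that some colour class has a monochromatic component $D$ with $|D|\ge|V(R)|/(r-1)$: inside $D$ one either carves out such a rich piece covering all but a negligible part of $D$ and recurses on the rest -- whose induced almost-complete graph avoids the colour just used, so that the recursion has depth at most $r$ -- or $D$ is close to an extremal colouring and is handled by hand. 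Tracking the branching yields $N(r,\Delta)\le\exp\big(r^{Cr\Delta^3}\big)$; the dependence on $\Delta$ is forced because realising every bipartition ratio of a bipartite member of $\FF$ (these range from $1{:}1$ up to $\Delta{:}1$), and hosting non-bipartite members, requires monochromatic fans and cliques of size $\Theta(\Delta)$ inside $R$, whose existence costs a multicolour Ramsey number, and it is this Ramsey-type cost, applied inside a recursion of depth $r$, that makes the final bound doubly exponential.

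Tasks (ii) and (iii) are the routine part. For (ii): given a rich piece $P_i$ in colour $c_i$ with blow-up $W_i=\bigcup_{v\in P_i}V_v$, clean the relevant pairs to super-regular pairs, take the homomorphism $F_{|W_i|}\to H_i$ supplied above, and apply the blow-up lemma to embed a colour-$c_i$ copy of $F_{|W_i|}$ spanning $W_i$; the bipartite half here is modelled on Theorem~\ref{thm:2-colour bipartite tiling}. For (iii): the set $V_0$, the clusters meeting only irregular pairs, the vertices discarded during cleaning, and the vertices lost to rounding $|W_i|$ to a multiple of $L$ form a set of size at most $\gamma n$; rather than spend one tile per leftover vertex (which would be far too many), I would reserve a slack of $\gamma L$ vertices inside each rich piece and then absorb leftover vertices one at a time: each leftover vertex $v$ has at least $n/r$ edges of a single colour into some piece, and super-regularity together with the slack lets $v$ be inserted while keeping a spanning copy of the (slightly larger) appropriate member of $\FF$. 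What is left over at the very end, if anything, has bounded size and becomes $O(1)$ extra tiles. The resulting tiling has size $O(N(r,\Delta))\le\exp\big(\exp(r^{Cr\Delta^3})\big)$, with no dependence on $M$.

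The main obstacle is exactly the structural decomposition in task (i), and specifically getting its size bounded in terms of $r$ and $\Delta$ alone. Peeling off one monochromatic component at a time removes only a constant fraction of the still-uncovered vertices per round, so a naive argument needs $\Theta(\log n)$ rounds and produces unboundedly many pieces; the real content is to exploit that removing a monochromatic connected matching -- not merely a component -- leaves the remaining colouring rigidly structured, so that only boundedly many rounds are needed, and then to keep the branching of that process under control. With this method a doubly-exponential, $\Delta$-dependent bound appears to be unavoidable.
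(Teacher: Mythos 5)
The paper does not prove this statement: Theorem~\ref{thm:r-colour tiling} is cited from Corsten and Mendon\c{c}a \cite{CM21} and is used only as background motivation for the paper's own Theorem~\ref{thm:tiling with bipartite sequence}. So there is no in-paper proof for me to compare your attempt against.

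Evaluating your proposal on its own terms, it is an outline rather than a proof, and the central piece (your task~(i)) has a genuine gap. You claim a recursion of depth at most $r$ by repeatedly invoking Gy\'arf\'as's theorem: extract a large monochromatic connected piece $D$ in some colour and recurse on what remains, ``whose induced almost-complete graph avoids the colour just used.'' That is not true. If $D$ is a red connected component, then the red colour is absent only \emph{between} $D$ and its complement; it can reappear freely both inside $V(R)\setminus D$ and inside $D$ itself after you carve off a rich red piece. So the colour count does not drop along the recursion, and the depth is not bounded by $r$. Your own final paragraph essentially concedes this, noting that a naive peeling argument needs $\Theta(\log n)$ rounds, and you then appeal to ``removing a monochromatic connected matching leaves the remaining colouring rigidly structured'' without saying what that structure is or how it bounds the number of pieces in terms of $r$ and $\Delta$ only. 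That structural claim is precisely the theorem you are trying to prove, so as written the argument is circular at its most important step. Separately, your bound-tracking does not actually produce the stated double exponential: you assert $N(r,\Delta)\le\exp(r^{Cr\Delta^3})$ and a tiling of size $O(N(r,\Delta))$, which would give a single exponential; the inequality $O(N(r,\Delta))\le\exp(\exp(r^{Cr\Delta^3}))$ is then only a trivially wasteful weakening, not an explanation of why a second exponential is unavoidable. The absorption step (iii) also needs more care than ``routine'': inserting leftover vertices one at a time via slack and super-regularity must be shown to terminate in a bounded number of tiles rather than one per vertex, and that requires an absorbing structure of the kind the present paper builds carefully in Section~\ref{sec:absorption process}, not just a reserve of $\gamma L$ spare vertices.
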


They point out that their proof gives a double exponential upper bound for bipartite $\Delta$-bounded graph sequences. Moreover, they write that it would be very interesting to prove Conjecture \ref{con:general tiling} for bipartite $\Delta$-bounded graph sequences. In this paper, we prove such a result with a stronger bound.

\begin{theorem} \label{thm:tiling with bipartite sequence}
    For every positive integer $r$ there exists a constant $C_r$ such that for any $\Delta$ and any bipartite $\Delta$-bounded graph sequence $\FF$, $\tau_r(\FF)\leq \exp(C_r\Delta)$.
\end{theorem}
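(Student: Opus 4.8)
The plan is to follow the standard architecture for results of this type—reduce to a dense-structure problem via regularity, win locally on a positive-density substructure, remove a bounded number of vertices, and iterate—but to drive the whole argument with the *connected matching* / *monochromatic connected subgraph* paradigm that keeps the bound single-exponential in $\Delta$.

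First I would set up the reduced picture. Apply the multicolour Szemerédi regularity lemma to the $r$-edge-coloured $K_n$ to obtain a partition into a bounded number of parts, and form the $r$-edge-coloured \emph{reduced graph} $R$ on the clusters, where a cluster pair gets colour $i$ if the pair is $\eps$-regular and has density at least (say) $1/2r$ in colour $i$. A Ramsey-type / density argument on $R$ produces a monochromatic connected subgraph, say in colour $i$, spanning a constant fraction $\beta_r$ of the clusters, hence a set $W\subseteq V(K_n)$ with $|W|\geq \beta_r n/2$ together with a colour-$i$ connected ``skeleton'' on the clusters inside $W$. The key point is that on $W$ we are now in a quasi-two-colour situation: colour $i$ is dense and connected, so we should be able to embed an almost-spanning subgraph of $W$ using only colour $i$, provided the pieces we want to embed are bipartite of bounded degree.

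The heart of the argument is the local embedding step: given the colour-$i$ connected cluster-structure on $W$ and a target graph $F_k\in\FF$ with $v(F_k)=k$ and $\Delta(F_k)\le\Delta$, embed $F_k$ monochromatically in colour $i$ for every $k$ up to $(1-\delta)|W|$. Here bipartiteness is essential and is exactly where the single-exponential dependence comes from, mirroring Theorems~\ref{thm:2-colour bipartite tiling} and \ref{thm:tiling lower bound}: decompose $F_k$ along its bipartition, and embed it across a colour-$i$ ``connected blow-up'' of an even path/cycle in the reduced graph using a bipartite version of the blow-up lemma (or the Grinshpun--S\'ark\"ozy two-colour machinery applied colour-by-colour). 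Because $F_k$ is bipartite we only need to find, in the reduced graph, a long monochromatic connected structure containing an edge—equivalently a monochromatic connected matching of linear size—rather than (as in the non-bipartite case) a structure forcing odd cycles; this is what lets the number of clusters we must control, and hence the final constant, stay $\exp(C_r\Delta)$ rather than worse. I would isolate this as a lemma: \emph{in any $r$-edge-coloured complete graph on $m$ vertices there is a monochromatic subgraph isomorphic to $F$ for every bipartite $F$ with $v(F)\le m/\exp(C_r\Delta)$ and $\Delta(F)\le\Delta$}, proved via regularity + connected matchings + bipartite blow-up, with the constant tracked carefully; this is the natural common generalization of the multicolour Ramsey numbers of bounded-degree bipartite graphs alluded to in the abstract.

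Finally, iterate. Having covered all but a $\delta$-fraction of $W$ (and we may absorb the leftover of $W$ together with $V(K_n)\setminus W$ into the remainder), we have used one tile and reduced to an $r$-edge-coloured complete graph on at most $(1-\beta_r\delta'/2)n$ vertices; repeating gives a geometric decay, so after $O_r(\log n)$ steps we are down to a constant number of vertices, which we finish off with singletons (each singleton is $F_1$). A standard trick converts the $O_r(\log n)$ tiles into $\exp(C_r\Delta)$ tiles: instead of shaving a constant fraction, at each stage use the local embedding lemma to cover a $\big(1-1/\exp(C_r\Delta)\big)$-fraction of the \emph{current} vertex set in one tile, so that a bounded number—$\exp(C_r'\Delta)$—of tiles already brings us below any absolute constant, and the residual constant number of vertices costs only $O(1)$ further singleton tiles. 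The choice of which colour's connected structure to use can change from stage to stage, but that is harmless since each stage contributes just one tile. The main obstacle, and where the real work lies, is the embedding lemma with the correct $\exp(C_r\Delta)$-type control on how large $m$ must be relative to $v(F)$: one must show that a \emph{single} monochromatic connected matching of linear size in the reduced graph suffices to host an arbitrary bipartite bounded-degree $F$ of almost the full order, which requires a careful bipartite blow-up argument balancing the two colour classes of $F$ across the clusters and handling divisibility/defect vertices by a bounded amount of surgery.
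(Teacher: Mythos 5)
Your proposal is a genuinely different route from the paper's (which uses no regularity lemma at all), but it has a gap that would sink it.

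The paper proceeds via dependent random choice (\`a la Fox--Sudakov) to find a monochromatic bipartite copy $F=(X,Y)$ of some $F_k\in\FF$ with an additional \emph{switching} structure on $Y$ (Definition~\ref{def:absorber}), and then runs an absorption argument (Lemma~\ref{lem:good partition absorbs}) to mop up the leftover vertices exactly. There is no reduced graph, no connected matching, and no blow-up lemma; the single-exponential dependence on $\Delta$ comes from the dependent random choice bound for bipartite bounded-degree graphs plus a bounded (only $r$-dependent) number of absorption rounds.

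The gap in your proposal is in the last paragraph, where you claim that one can ``at each stage cover a $\bigl(1-1/\exp(C_r\Delta)\bigr)$-fraction of the current vertex set in one tile.'' That is the crux, and it is not available to you. The Ramsey bound $R_r(F_k)\le\exp(C_r\Delta)k$, and equally the regularity/connected-matching route you sketch, give a monochromatic copy of $F_k$ of order roughly $n/\exp(C_r\Delta)$ (or, in the connected-matching version, roughly $\beta_r n$ for some $\beta_r$ that shrinks with $r$); in neither case do you get a monochromatic copy covering all but a $1/\exp(C_r\Delta)$-fraction of the vertices. With the fraction you can actually remove per step, the greedy iteration produces $\Theta_r(\Delta r^\Delta\log n)$ tiles, not a constant number --- precisely the bound of the paper's Corollary~\ref{cor:mono cover}, which is only a preliminary tool there. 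Removing the $\log n$ is exactly what the absorption machinery is for, and your proposal has nothing playing that role: the phrase ``absorb the leftover of $W$ together with $V(K_n)\setminus W$ into the remainder'' is not a mechanism, it is the problem restated. Put differently, you would be proving a Ramsey-type embedding lemma (which is indeed a key ingredient, and corresponds to Proposition~\ref{prop:density ramsey} and Corollary~\ref{cor:big mono graph} in the paper) but then stopping short of the step that actually converts an $O(\log n)$ cover into a bounded partition. A secondary issue is that the multicolour regularity lemma itself seems ill-suited to single-exponential bounds in $r$ colours --- Corsten--Mendon\c{c}a's regularity-based proof of Theorem~\ref{thm:r-colour tiling} lands at triple-exponential --- whereas dependent random choice sidesteps that loss entirely.
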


By Theorem \ref{thm:tiling lower bound}, this result is tight up to the value of $C_r$.

\subsection{Connection to the Ramsey numbers of bounded-degree graphs}

The results just mentioned are closely related to the study of the Ramsey numbers of bounded-degree graphs. The research on these Ramsey numbers was initiated by Burr and Erd\H os \cite{BE75}. They conjectured that for any $\Delta$ there is a constant $c(\Delta)$ such that the Ramsey number of every graph $H$ with $n$ vertices and maximum degree at most $\Delta$ satisfies $R(H)\leq c(\Delta)n$. The conjecture was proved by Chv\'atal, R\"odl, Szemer\'edi and Trotter \cite{CRSzT83} using Szemer\'edi's regularity lemma. There has been plenty of research on improving the value of $c(\Delta)$ since. First, Eaton \cite{Eat98} showed that $c(\Delta)\leq 2^{2^{C\Delta}}$ for some absolute constant $C$. The bound was further improved by Graham, R\"odl and Ruci\'nski \cite{GRR00} who proved that $c(\Delta)\leq 2^{C\Delta \log^2 \Delta}$. Finally, Conlon, Fox and Sudakov \cite{CFS12} showed that $c(\Delta)\leq 2^{C\Delta \log \Delta}$, which is the current best bound, although it is conjectured that $c(\Delta)\leq 2^{C\Delta}$.

When $H$ is bipartite, better bounds are known. Improving on earlier work by Graham, R\"odl and Ruci\'nski~\cite{GRR01}, Conlon \cite{Con09}, and Fox and Sudakov \cite{FS09}, Conlon, Fox and Sudakov \cite{CFS16} showed that when $H$ is bipartite, we can take $c(\Delta)\leq 2^{\Delta+6}$. On the other hand, Graham, R\"odl and Ruci\'nski \cite{GRR00,GRR01} proved that for every $\Delta$ and sufficiently large $n$ there are bipartite graphs $H$ with $n$ vertices and maximum degree $\Delta$ for which $R(H)\geq 2^{c'\Delta}n$.

The best known upper bound for the $r$-colour Ramsey number of an $n$-vertex graph $H$ with maximum degree $\Delta$ is $R_r(H)\leq \exp(C_r\Delta^2)n$ (see \cite{CFS15}). Fox and Sudakov \cite{FS09} showed that if $H$ is also bipartite, then $R_r(H)\leq \exp(C_r\Delta)n$.

Note that our Theorem \ref{thm:tiling with bipartite sequence} can be viewed as a generalization of the last bound $R_r(H)\leq \exp(C_r\Delta)n$. Indeed, let $H$ be an $n$-vertex bipartite graph with maximum degree at most $\Delta$. Clearly, we can define a bipartite $\Delta$-bounded graph sequence $\FF=\{F_1,F_2,\dots\}$ where $F_n=H$ and $F_n$ is a subgraph of $F_m$ for every $m>n$. Now let $r\in \mathbb{N}$ and let $C_r$ be the constant provided by Theorem \ref{thm:tiling with bipartite sequence}. We claim that if $N\geq \exp(C_r\Delta)n$, then any $r$-edge colouring of $K_N$ contains a monochromatic copy of $H$. To see this, note that by Theorem~\ref{thm:tiling with bipartite sequence}, any such colouring has a monochromatic $\mathcal{F}$-tiling of size at most $\exp(C_r\Delta)$. In particular, it contains a monochromatic copy of $F_m$ for some $m\geq N/\exp(C_r\Delta)\geq n$. Any such $F_m$ contains $H$ as a subgraph, so the colouring contains a monochromatic copy of $H$.

Similarly, Theorems \ref{thm:2-colour tiling}, \ref{thm:2-colour bipartite tiling} and \ref{thm:r-colour tiling} generalize the corresponding upper bounds on Ramsey numbers. On the other hand, it is clear that the Ramsey bounds do not directly imply tiling results.

\subsection{Sketch of the proof of Theorem \ref{thm:tiling with bipartite sequence}}

Before we turn to the proof, we provide a brief outline. For each main step of the argument, we shall also give a reference to the corresponding subsection in the proof. As is common in tiling problems, we use the absorption method.

Let us give a rough sketch how the absorption is performed. Let $\FF=\{F_1,F_2,\dots\}$ be a bipartite $\Delta$-bounded graph sequence and assume that our $r$-edge coloured complete graph $G$ contains a monochromatic red subgraph $F$, isomorphic to an element of $\FF$. Let $(X,Y)$ be a bipartition of $F$ and let $Z$ be a set of vertices in $G$, disjoint from $X\cup Y$. Suppose that $Z$ is not much larger than $Y$ and that for each $z\in Z$ there are many $y\in Y$ such that all edges between $z$ and the set $N_F(y)$ are red. The key observation is that if the graph $F$ has a certain structure, then it can absorb $Z$; that is, $G$ has a small collection of pairwise vertex-disjoint monochromatic subgraphs, each isomorphic to an element of $\FF$, whose union is precisely $X\cup Y\cup Z$. Suppose that for any $u,y\in Y$, there are many disjoint choices for $(v,w)\in Y^2$ such that the edges between $v$ and $N_F(u)$, the edges between $w$ and $N_F(v)$ and the edges between $y$ and $N_F(w)$ are all red. By repeatedly using the upper bound for the multicolour Ramsey number of bounded-degree bipartite graphs (e.g. from \cite{FS09}), we can cover almost all of $Z$ by a small number of monochromatic subgraphs, each isomorphic to an element of $\FF$. Since $Z$ is not much larger than $Y$, the remaining uncovered subset $Z'\subset Z$ eventually becomes much smaller than $Y$. Let $Z'=\{z_1,\dots,z_k\}$. Since $Z'$ is much smaller than $Y$ and for every $z\in Z$ there are many $y\in Y$ such that all edges between $z$ and $N_F(y)$ are red, we can choose distinct $y_1,y_2,\dots,y_k\in Y$ such that for each $i\in [k]$, all edges between $z_i$ and $N_F(y_i)$ are red. Since $k=|Z'|$ is much smaller than $|Y|$, by the upper bound on Ramsey numbers there exists a monochromatic copy of $F_k$ in $G[Y]$. Let the vertex set of this copy be $\{u_1,u_2,\dots,u_k\}$. Using the assumed property of $Y$, we can choose distinct vertices $v_1,v_2,\dots,v_k,w_1,w_2,\dots,w_k$ such that for each $i\in [k]$, the edges between $v_i$ and $N_F(u_i)$, the edges between $w_i$ and $N_F(v_i)$ and the edges between $y_i$ and $N_F(w_i)$ are all red. Therefore replacing in $F$, for each $i\in [k]$, $u_i$ by $v_i$, $v_i$ by $w_i$, $w_i$ by $y_i$ and $y_i$ by $z_i$, we get a monochromatic red subgraph of $G$ isomorphic to $F$ with vertex set $X\cup (Y\setminus \{u_1,\dots,u_k\})\cup \{z_1,\dots,z_k\}$ (see Figure \ref{fig:switching} for an illustration of how the graph $F$ is modified). We had already found a monochromatic graph with vertex set $\{u_1,\dots,u_k\}$ and a small collection of monochromatic subgraphs partitioning $Z\setminus \{z_1,\dots,z_k\}$, so we have partitioned $X\cup Y\cup Z$ with a small number of monochromatic subgraphs, each isomorphic to an element of $\FF$.

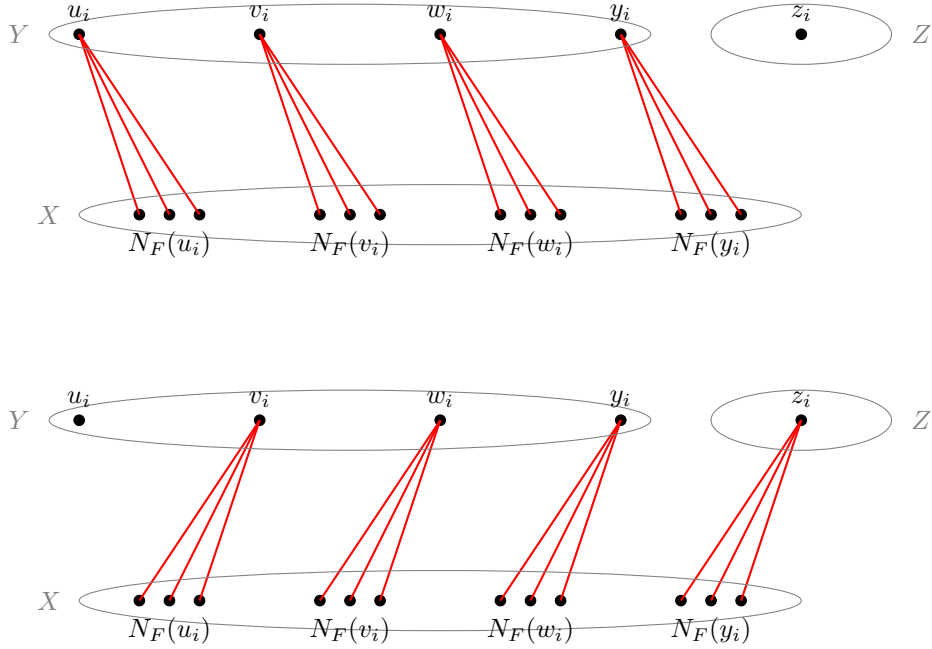
\begin{figure}
    \centering
    \begin{tikzpicture}[scale=0.4]
        \draw[fill=black](2,0)circle(5pt);
        \draw[fill=black](3,0)circle(5pt);
        \draw[fill=black](4,0)circle(5pt);
        \draw[fill=black](8,0)circle(5pt);
        \draw[fill=black](9,0)circle(5pt);
        \draw[fill=black](10,0)circle(5pt);
        \draw[fill=black](14,0)circle(5pt);
        \draw[fill=black](15,0)circle(5pt);
        \draw[fill=black](16,0)circle(5pt);
        \draw[fill=black](20,0)circle(5pt);
        \draw[fill=black](21,0)circle(5pt);
        \draw[fill=black](22,0)circle(5pt);
        
        \draw[fill=black](0,6)circle(5pt);
        \draw[fill=black](6,6)circle(5pt);
        \draw[fill=black](12,6)circle(5pt);
        \draw[fill=black](18,6)circle(5pt);
        \draw[fill=black](24,6)circle(5pt);
        
        \draw[thick,red](0,6)--(2,0);
        \draw[thick,red](0,6)--(3,0);
        \draw[thick,red](0,6)--(4,0);
        \draw[thick,red](6,6)--(8,0);
        \draw[thick,red](6,6)--(9,0);
        \draw[thick,red](6,6)--(10,0);
        \draw[thick,red](12,6)--(14,0);
        \draw[thick,red](12,6)--(15,0);
        \draw[thick,red](12,6)--(16,0);
        \draw[thick,red](18,6)--(20,0);
        \draw[thick,red](18,6)--(21,0);
        \draw[thick,red](18,6)--(22,0);
        
        \draw[rotate around={90:(12,0)},gray] (12,0) ellipse (1 and 12);
        \draw[rotate around={90:(9,6)},gray] (9,6) ellipse (1 and 10);
        \draw[rotate around={90:(24,6)},gray] (24,6) ellipse (1 and 3);
        
        \node at (3,-1) {$N_F(u_i)$};
        \node at (9,-1) {$N_F(v_i)$};
        \node at (15,-1) {$N_F(w_i)$};
        \node at (21,-1) {$N_F(y_i)$};
        \node[gray] at (-1,0) {$X$};
        \node[gray] at (-2,6) {$Y$};
        \node[gray] at (28,6) {$Z$};
        \node at (0,6.7) {$u_i$};
        \node at (6,6.7) {$v_i$};
        \node at (12,6.7) {$w_i$};
        \node at (18,6.7) {$y_i$};
        \node at (24,6.7) {$z_i$};
    \end{tikzpicture}
    
    \vspace{15mm}
    
    \begin{tikzpicture}[scale=0.4]
        \draw[fill=black](2,0)circle(5pt);
        \draw[fill=black](3,0)circle(5pt);
        \draw[fill=black](4,0)circle(5pt);
        \draw[fill=black](8,0)circle(5pt);
        \draw[fill=black](9,0)circle(5pt);
        \draw[fill=black](10,0)circle(5pt);
        \draw[fill=black](14,0)circle(5pt);
        \draw[fill=black](15,0)circle(5pt);
        \draw[fill=black](16,0)circle(5pt);
        \draw[fill=black](20,0)circle(5pt);
        \draw[fill=black](21,0)circle(5pt);
        \draw[fill=black](22,0)circle(5pt);
        
        \draw[fill=black](0,6)circle(5pt);
        \draw[fill=black](6,6)circle(5pt);
        \draw[fill=black](12,6)circle(5pt);
        \draw[fill=black](18,6)circle(5pt);
        \draw[fill=black](24,6)circle(5pt);
        
        \draw[thick,red](6,6)--(2,0);
        \draw[thick,red](6,6)--(3,0);
        \draw[thick,red](6,6)--(4,0);
        \draw[thick,red](12,6)--(8,0);
        \draw[thick,red](12,6)--(9,0);
        \draw[thick,red](12,6)--(10,0);
        \draw[thick,red](18,6)--(14,0);
        \draw[thick,red](18,6)--(15,0);
        \draw[thick,red](18,6)--(16,0);
        \draw[thick,red](24,6)--(20,0);
        \draw[thick,red](24,6)--(21,0);
        \draw[thick,red](24,6)--(22,0);
        
        \draw[rotate around={90:(12,0)},gray] (12,0) ellipse (1 and 12);
        \draw[rotate around={90:(9,6)},gray] (9,6) ellipse (1 and 10);
        \draw[rotate around={90:(24,6)},gray] (24,6) ellipse (1 and 3);
        
        \node at (3,-1) {$N_F(u_i)$};
        \node at (9,-1) {$N_F(v_i)$};
        \node at (15,-1) {$N_F(w_i)$};
        \node at (21,-1) {$N_F(y_i)$};
        \node[gray] at (-1,0) {$X$};
        \node[gray] at (-2,6) {$Y$};
        \node[gray] at (28,6) {$Z$};
        \node at (0,6.7) {$u_i$};
        \node at (6,6.7) {$v_i$};
        \node at (12,6.7) {$w_i$};
        \node at (18,6.7) {$y_i$};
        \node at (24,6.7) {$z_i$};
    \end{tikzpicture}
    
    \caption{Modification of the subgraph $F$, resulting in an isomorphic red subgraph with vertex set $X\cup (Y\setminus \{u_1,\dots,u_k\})\cup \{z_1,\dots,z_k\}$}
    \label{fig:switching}
\end{figure}

More generally, we can use a similar argument if there exist large pairwise disjoint subsets $Y_1,\dots,Y_t\subset Y$ which together almost cover $Y$ and which have the property that for each $j\in [t]$ and every $u,y\in Y_j$ there are many disjoint choices for $(v,w)\in Y_j^2$ such that the edges between $v$ and $N_F(u)$, the edges between $w$ and $N_F(v)$ and the edges between $y$ and $N_F(w)$ are all red.

The precise description of the absorption is given in Subsection \ref{sec:absorption process}. Subsections \ref{sec:construct absorber} and \ref{sec:good subgraph} are devoted to constructing the absorbers. More specifically, the construction of the subsets $Y_1,\dots,Y_t$ with the above key property will rely on the results in Subsection~\ref{sec:construct absorber}, while the construction of the subgraph $F$ will be presented in Subsection~\ref{sec:good subgraph}.

For the latter, we build upon the technique of \cite{FS09} and \cite{Con09} that was used for finding a large red subgraph isomorphic to some $F\in \FF$ in a host graph $G$ with many red edges. The idea was to use dependent random choice to find a large subset $U\subset V(G)$ with the property that all but a tiny proportion of the sets of size $\Delta$ in $U$ have many common red neighbours. Using the condition that $F$ is bipartite and has maximum degree at most $\Delta$, this suffices for finding a red copy of $F$ in $G$ with parts $X\subset U$ and $Y$. For our results, we will need to tweak their argument slightly as we need to find a copy of $F$ which also satisfies some additional properties, such as that for every vertex $w\in V(G)$ which has many red neighbours in $U$, there should be many vertices $y\in Y$ such that all edges between $w$ and $N_F(y)$ are red. Fortunately, there are many choices to find a red copy of $F$ with $X\subset U$, so we will be able to argue that a random embedding will possess the required properties. The precise statement is given in Lemma \ref{lem:find good subgraph}.

In Subsection \ref{sec:completing the proof}, we complete the proof of our main result, Theorem \ref{thm:tiling with bipartite sequence}.

\noindent \textbf{Notation and remarks.} We will use the standard asymptotic notations $O$ and $\Omega$. For positive functions $f$ and $g$, $f=O(g)$ (respectively, $f=\Omega(g)$) means that there exists a positive absolute constant $C$ such that $f\leq Cg$ (respectively, $f\geq Cg$). Also, $f=O_r(g)$ and $f=\Omega_r(g)$ mean that the same inequalities hold for some positive $C$ which only depends on $r$.

Very occasionally, we treat real numbers as integers when doing so makes no significant difference in the argument.

For the whole Section \ref{sec:proof}, we fix positive integers $r$ and $\Delta$ and a $\Delta$-bounded bipartite graph sequence $\FF=\{F_1,F_2,\dots\}$ (and we do not define them in each lemma).

\section{The proof of Theorem \ref{thm:tiling with bipartite sequence}} \label{sec:proof}

\subsection{Preliminaries} \label{sec:preliminaries}

In this subsection we present a few preliminary lemmas which will be used in our proofs. The first one is a density version of the upper bound on the multicolour Ramsey numbers of bipartite bounded-degree graphs.

\begin{proposition}[Fox--Sudakov \cite{FS09}] \label{prop:density ramsey}
    Let $F$ be a bipartite graph with $k$ vertices and maximum degree $\Delta\geq 1$. If $\eps > 0$ and $G$ is a graph with $n \geq 32\Delta \eps^{-\Delta}k$ vertices and at least $\eps \binom{n}{2}$ edges, then $F$ is a subgraph of $G$.
\end{proposition}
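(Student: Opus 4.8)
The plan is to prove this by dependent random choice. We may assume $\eps\le 1$ (otherwise no $n$-vertex graph has at least $\eps\binom n2$ edges), and fix a bipartition $(A,B)$ of $F$. The argument runs in three stages: first we use dependent random choice to find a large set $U\subseteq V(G)$ almost all of whose $\Delta$-subsets have at least $k$ common neighbours in $G$; then we embed $A$ into $U$ so that the image of $N_F(b)$ has at least $k$ common neighbours for every $b\in B$; and finally we extend the embedding to $B$ greedily.

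For the first stage, pick vertices $v_1,\dots,v_\Delta$ of $G$ independently and uniformly at random and set $U=N_G(v_1)\cap\dots\cap N_G(v_\Delta)$. Then $\E\abs U=\sum_{w}(\Deg(w)/n)^\Delta$, which by convexity is at least $n(\bar d/n)^\Delta$, where $\bar d\ge\eps(n-1)$ is the average degree of $G$; using $n\ge 32\Delta\eps^{-\Delta}k$ and $\eps\le 1$ this gives $\E\abs U\ge 31\Delta k$. Call a $\Delta$-element set $S\subseteq V(G)$ \emph{bad} if fewer than $k$ vertices of $G$ are adjacent to all of $S$; for bad $S$ we have $\Pr[S\subseteq U]<(k/n)^\Delta$, so the expected number $Z$ of bad $\Delta$-subsets of $U$ satisfies $\E Z<\binom n\Delta(k/n)^\Delta\le k^\Delta/\Delta!$. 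Considering $\E[\abs U-\lambda Z]$ for an appropriate $\lambda$ then produces an outcome of $U$ with $\abs U\ge 15\Delta k$ and $Z\le\abs U\cdot k^{\Delta-1}/(16\Delta\cdot\Delta!)$; in particular bad $\Delta$-subsets form a tiny fraction of all $\Delta$-subsets of $U$.

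The second stage is the technical heart. Fix a linear order on $A$, and for $b\in B$ let $P_{b,j}$ be the set of the first $j$ vertices of $N_F(b)$ in this order. We embed the vertices of $A$ one at a time in the fixed order, maintaining the invariant that whenever a prefix $P_{b,j}$ has already been embedded, its image lies in at most $y_j$ bad $\Delta$-sets, where $y_0,y_1,\dots,y_{\Delta-1}$ is a decreasing geometric sequence with $y_0=Z$ and common ratio $y_j/y_{j-1}\approx(\Delta/Z)^{1/\Delta}$, chosen so as to balance the two obstructions below. When a vertex $a$ is placed, only the at most $\Delta$ sets $N_F(b)$ containing $a$ are affected, and for each such $b$: if placing $a$ completes $N_F(b)$ then at most $y_{\Delta-1}$ choices of image make $\phi(N_F(b))$ bad, and otherwise at most $\Delta\cdot\max_j(y_{j-1}/y_j)$ choices violate the invariant for the new prefix. (Here one first reduces, at the cost of a constant factor in the hypothesis on $n$, to the case where $F$ is $\Delta$-regular so that completion always occurs at level $\Delta$; the low-degree vertices of $B$ can instead be treated separately.) Each of these two quantities is bounded via Markov's inequality applied to the invariant for the previous prefix together with the bound on $Z$ from the first stage, and both are small enough that the total number of vertices forbidden at a step — including the at most $\abs A<k$ already-used ones — stays below $\abs U$. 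Hence the greedy embedding never gets stuck and yields an injection $\phi\colon A\to U$ with $\bigl|\bigcap_{a\in N_F(b)}N_G(\phi(a))\bigr|\ge k$ for every $b\in B$.

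For the final stage, process the vertices of $B$ in an arbitrary order; when we reach $b$, the set $\phi(N_F(b))$ has at least $k$ common neighbours in $G$ and at most $k-1$ vertices have been used so far, so $b$ can be placed at an unused common neighbour. This completes a copy of $F$ in $G$. I expect the main difficulty to lie in the bookkeeping of the second stage — choosing the geometric sequence $y_0,\dots,y_{\Delta-1}$ and checking that the crude count $Z\le k^\Delta/\Delta!$ together with $\abs U=\Omega(\Delta k)$ and $\Delta(F)\le\Delta$ keeps the number of forbidden vertices below $\abs U$ at every step; this balancing is precisely where the factor $32\Delta\eps^{-\Delta}$ in the hypothesis is used.
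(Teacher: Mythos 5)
The paper does not actually prove Proposition~\ref{prop:density ramsey}; it is cited verbatim from Fox--Sudakov \cite{FS09}. The closest thing to a proof in this paper is Lemma~\ref{lem:embed hypergraph}, which is (by the authors' own remark) a light modification of the embedding lemma that Fox and Sudakov use to derive exactly this statement, so it is fair to compare against that.

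Your overall architecture (dependent random choice to get a set $U$ in which all but few $\Delta$-subsets have $\geq k$ common neighbours; then a vertex-by-vertex embedding of $A$ that tracks how many bad $\Delta$-sets contain each partial image of a neighbourhood $N_F(b)$; then a greedy extension to $B$) is the right one and matches the source. Stage~1 is essentially correct as stated, and the ``consider $\E[\abs U - \lambda Z]$'' step does deliver both $\abs U \geq 15\Delta k$ and $Z \leq \abs U k^{\Delta-1}/(16\Delta\cdot\Delta!)$ simultaneously. The soft spot is in stage~2, specifically the parenthetical remark that you ``first reduce to the case where $F$ is $\Delta$-regular at the cost of a constant factor.'' That reduction is not free: a bipartite graph with max degree $\Delta$ does not in general embed into a $\Delta$-regular bipartite graph on the same (or even $(1+o(1))k$) vertex set --- e.g., $K_{1,\Delta}$ has $\Delta+1$ vertices but the smallest $\Delta$-regular bipartite supergraph has $2\Delta$ --- so the natural reduction replaces $k$ by up to $2k$ and therefore does \emph{not} recover the exact constant $32\Delta\eps^{-\Delta}$ in the statement unless you explicitly re-do the slack accounting. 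More to the point, the reduction is unnecessary: for a vertex $b$ with $\deg_F(b)=d<\Delta$ it suffices that $\phi(N_F(b))$, a $d$-set, be contained in \emph{some} good $\Delta$-subset of $U$, i.e.\ that $y_d < \binom{\abs U-d}{\Delta-d}$, and that inequality does follow from the bounds $Z\le\abs U k^{\Delta-1}/(16\Delta\cdot\Delta!)$ and $\abs U\geq 15\Delta k$ (since $\log y_d - \log\binom{\abs U-d}{\Delta-d}$ is convex in $d$, one only needs to check $d=0$ and $d=\Delta-1$). You should state and check this instead of appealing to the reduction. This is precisely where the Fox--Sudakov (and the paper's Lemma~\ref{lem:embed hypergraph}) choice of richness threshold $y_j = \lambda^{\Delta-j}\binom{\abs U-j}{\Delta-j}$ is cleaner than your $y_j = Z(\Delta/Z)^{j/\Delta}$: with their threshold, a rich set of any size $\leq\Delta$ is automatically contained in a hyperedge (here, a good $\Delta$-set), so low-degree vertices of $B$ require no special casework, and the ``number of choices ruining richness'' bound comes out uniformly as $\lambda\abs U$ rather than as $\Delta\max_j(y_{j-1}/y_j)$, which keeps the arithmetic in the final step to one line.
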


\begin{corollary} \label{cor:big mono graph}
    If $n\geq 32\Delta r^{\Delta} k$, then any $r$-edge colouring of $K_n$ contains a monochromatic copy of $F_k$.
\end{corollary}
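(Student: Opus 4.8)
The plan is to derive this directly from Proposition~\ref{prop:density ramsey} by a pigeonhole argument. Given an $r$-edge colouring of $K_n$, the $\binom{n}{2}$ edges are distributed among the $r$ colour classes, so by the pigeonhole principle at least one colour class $G$ satisfies $e(G) \geq \frac{1}{r}\binom{n}{2}$. I would then apply Proposition~\ref{prop:density ramsey} to $G$ with density parameter $\eps = 1/r$ and with $F = F_k$; note that $F_k$ is bipartite, has exactly $k$ vertices, and has maximum degree at most $\Delta$. Choosing $\eps = 1/r$ is the one thing to get right, since then $\eps^{-\Delta} = r^\Delta$, matching the constant in the statement.

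A minor technical point needs attention: Proposition~\ref{prop:density ramsey} is stated for a graph whose maximum degree is some $\Delta \geq 1$, whereas $F_k$ only satisfies $\Delta(F_k) \leq \Delta$. If $\Delta(F_k) = 0$, then $F_k$ has no edges and is trivially a subgraph of $G$. Otherwise, set $d := \Delta(F_k) \in \{1,\dots,\Delta\}$ and invoke the proposition with maximum-degree parameter $d$: its hypothesis then requires $|V(G)| = n \geq 32 d\, \eps^{-d} k = 32 d r^d k$. Since $1 \leq d \leq \Delta$ and $r \geq 1$, the function $x \mapsto 32 x r^x k$ is nondecreasing on $[1,\infty)$, so $32 d r^d k \leq 32\Delta r^\Delta k \leq n$, and the hypothesis holds. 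The proposition then gives $F_k \subseteq G$, i.e.\ a monochromatic copy of $F_k$ in the original colouring.

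There is essentially no genuine obstacle here — the statement is a routine consequence of the density version of the Ramsey bound, and the only care required is the bookkeeping with the maximum-degree parameter and the choice $\eps = 1/r$.
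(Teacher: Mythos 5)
Your proof is correct and follows the same route as the paper: pigeonhole to find a colour class with at least $\frac{1}{r}\binom{n}{2}$ edges, then apply Proposition~\ref{prop:density ramsey} with $\eps=1/r$. The extra care you take over the degree parameter (handling $\Delta(F_k)<\Delta$ via monotonicity of $32xr^xk$) is a sound, if not strictly necessary, piece of bookkeeping that the paper glosses over.
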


\begin{proof}
    There is a colour which appears on at least $\frac{1}{r}\binom{n}{2}$ edges, and we can apply Proposition \ref{prop:density ramsey} with $\eps=1/r$ for the graph with these edges.
\end{proof}

By repeatedly finding large monochromatic subgraphs, we can cover almost all vertices with a small number of monochromatic subgraphs from $\FF$.

\begin{corollary} \label{cor:mono cover}
    For any $0<t\leq n$ and any $r$-edge colouring of $K_n$, there exists a collection of at most $64\Delta r^{\Delta}(\log (n/t)+2)$ pairwise vertex-disjoint monochromatic subgraphs, each of which is isomorphic to an element of $\mathcal{F}$, whose union covers more than $n-t$ vertices.
    
    In particular, any $r$-edge colouring of $K_n$ has a monochromatic $\FF$-tiling of size at most $64\Delta r^{\Delta}(\log n+2)$.
\end{corollary}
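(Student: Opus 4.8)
The plan is to prove the first statement by iterating Corollary~\ref{cor:big mono graph}, peeling off a large monochromatic subgraph from $\FF$ at each step, and then deduce the ``in particular'' clause by taking $t=1$. Concretely, suppose at some stage we have already removed some vertices and $m$ vertices remain, with $m>t$. Apply Corollary~\ref{cor:big mono graph} to the complete graph on these $m$ vertices: since $\FF$ is $\Delta$-bounded, $F_k$ has exactly $k$ vertices, so there is a monochromatic copy of $F_k$ as long as $m\geq 32\Delta r^{\Delta}k$. Taking $k=\lfloor m/(32\Delta r^{\Delta})\rfloor$ (recalling the remark that we may treat such quantities as integers), we obtain a monochromatic subgraph isomorphic to $F_k\in\FF$ covering at least $m/(64\Delta r^{\Delta})$ vertices, say, for $m$ not too small; remove its vertex set and continue. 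Each step multiplies the number of remaining vertices by a factor of at most $1-\tfrac{1}{64\Delta r^{\Delta}}$, so after $s$ steps at most $n\left(1-\tfrac{1}{64\Delta r^{\Delta}}\right)^{s}\leq n\exp\!\left(-\tfrac{s}{64\Delta r^{\Delta}}\right)$ vertices remain.

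Next I would solve for when this drops below $t$: it suffices that $\exp\!\left(-\tfrac{s}{64\Delta r^{\Delta}}\right)\leq t/n$, i.e. $s\geq 64\Delta r^{\Delta}\log(n/t)$. So after $s=\lceil 64\Delta r^{\Delta}\log(n/t)\rceil$ steps we have covered all but at most $t$ vertices, and have used $s$ monochromatic subgraphs from $\FF$; absorbing the ceiling and the lower-order effects (from the $m$ possibly being small near the end, where $F_k$ might be a single vertex or even the empty graph, but those are still elements of $\FF$) into the constant and the additive $+2$ gives the claimed bound of $64\Delta r^{\Delta}(\log(n/t)+2)$ subgraphs covering more than $n-t$ vertices. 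The bookkeeping to justify the exact constant is the only slightly delicate point, and I expect that to be the main thing to get right: one must check that the geometric-decay estimate still works when $m$ is small (so that $\lfloor m/(32\Delta r^{\Delta})\rfloor$ could be $0$), which is handled by noting that once $m\leq 64\Delta r^{\Delta}$ say, we need at most $O(\Delta r^{\Delta})$ further singleton subgraphs — or more cleanly, once $m \le t$ we simply stop, and if $t$ itself is small the $+2$ slack covers the regime where the per-step multiplicative bound degrades.

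Finally, for the ``in particular'' statement I would apply the first part with $t=1$: this yields a collection of at most $64\Delta r^{\Delta}(\log n+2)$ pairwise vertex-disjoint monochromatic subgraphs from $\FF$ covering more than $n-1$ vertices, hence covering all $n$ vertices (since the number of uncovered vertices is a nonnegative integer strictly less than $1$). Thus the vertex sets of these subgraphs partition $V(K_n)$, giving a monochromatic $\FF$-tiling of size at most $64\Delta r^{\Delta}(\log n+2)$, as required. \hfill$\qed$
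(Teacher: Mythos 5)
Your proof is correct and essentially reproduces the paper's argument: greedily peel off monochromatic copies of $F_k$ with $k\approx m/(32\Delta r^{\Delta})$ via Corollary~\ref{cor:big mono graph}, so the uncovered set shrinks by a $(1-\tfrac{1}{64\Delta r^{\Delta}})$-factor per step as long as at least $64\Delta r^{\Delta}$ vertices remain, and then switch to singletons. The paper makes that singleton fallback explicit (this is what accounts for the $+2$ slack), whereas you leave it as one of two loosely sketched alternatives, but the underlying approach and the deduction of the second assertion by taking $t=1$ are the same.
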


\begin{proof}
We find suitable subgraphs by the following algorithm. Let us assume that we have already covered all but at most $s$ vertices of $K_n$. If $s\leq 64\Delta r^{\Delta}$, then cover these vertices by singletons (note that the unique $1$-vertex graph belongs to $\mathcal{F}$); this gives a cover of all vertices. Else, there exists a positive integer $\ell$ such that $\frac{s}{64\Delta r^{\Delta}}\leq \ell \leq \frac{s}{32\Delta r^{\Delta}}$. Since there are at least $s\geq 32\Delta r^{\Delta} \ell$ vertices not yet covered, by Corollary~\ref{cor:big mono graph} there exists a monochromatic copy of $F_\ell$ whose vertices were not yet covered. Add this to the collection of covering subgraphs.

Eventually, we will have covered all but fewer than $t$ vertices, and then we stop. We claim that we have used at most $64\Delta r^{\Delta}(\log (n/t)+2)$ subgraphs. To prove this, it suffices to show that before we terminate the process or get to the stage where $s\leq 64\Delta r^{\Delta}$, we used at most $64\Delta r^{\Delta}(\log (n/t)+1)$ subgraphs. Suppose that we used $k$ subgraphs to get to this stage. Then, since each time we covered at least $\frac{1}{64\Delta r^{\Delta}}$ proportion of the yet uncovered vertices, it follows that $t\leq (1-\frac{1}{64\Delta r^{\Delta}})^{k-1}n$. Using the inequality $1-x\leq e^{-x}$, we obtain that $k\leq 64\Delta r^{\Delta}\log (n/t)+1$, completing the proof of the first assertion. The second assertion follows from the first one by taking $t=1$.
\end{proof}

The next lemma is a version of dependent random choice.

\begin{lemma}\label{lem:dependent random choice}
Let $k,t\in \mathbb{N}$ and $0<\eps,\delta,\gamma<1$ such that $\delta\eps^{kt}\geq 2\gamma^t$. Let $G=(A,B)$ be a bipartite graph with $e(G)\geq \eps |A||B|$.
Then there is a set $S\subset A$ of size at least $\frac{1}{2}\eps^t|A|$ such that all but at most $\delta |S|^{k}$ sets of $k$ vertices in $S$ have at least $\gamma|B|$ common neighbours. 
\end{lemma}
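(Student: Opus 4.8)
The plan is to run the standard dependent random choice argument, sampling a small random set of vertices from $B$ and letting $S$ be their common neighbourhood in $A$. Concretely, pick $t$ vertices $b_1,\dots,b_t$ uniformly and independently (with replacement) from $B$, and set $T=\{b_1,\dots,b_t\}$ and $S=\bigcap_{i=1}^{t}N(b_i)\subseteq A$. The two quantities to control are the expected size of $S$ and the expected number of ``bad'' $k$-subsets of $S$, where a $k$-set $K\subseteq A$ is called \emph{bad} if $|N_G(K)\cap B|<\gamma|B|$ (here $N_G(K)$ denotes the common neighbourhood). A vertex $a\in A$ lies in $S$ precisely when all of $b_1,\dots,b_t$ fall inside $N(a)$, which happens with probability $(|N(a)|/|B|)^t$; hence by convexity (Jensen applied to $x\mapsto x^t$) and the density hypothesis $e(G)\ge \eps|A||B|$,
\[
\E[|S|] \;=\; \sum_{a\in A}\left(\frac{|N(a)|}{|B|}\right)^t \;\ge\; |A|\left(\frac{1}{|A||B|}\sum_{a\in A}|N(a)|\right)^t \;\ge\; \eps^t|A|.
\]
Similarly, a fixed $k$-set $K\subseteq A$ is entirely contained in $S$ with probability $(|N_G(K)\cap B|/|B|)^t$, so the expected number of bad $k$-subsets of $S$ is at most $\binom{|A|}{k}\cdot(\gamma)^t \le |A|^k\gamma^t$, using that a bad $K$ contributes at most $\gamma^t$.

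Combining the two bounds, consider the random variable $Z := |S| - \frac{1}{\delta|A|^{k-1}}\cdot\#\{\text{bad }k\text{-subsets of }S\}$ — more cleanly, I would bound $\E\big[\#\text{bad }k\text{-subsets of }S\big]\le |A|^k\gamma^t$ and compare it with $\tfrac12\E[|S|]$. Since $\delta\eps^{kt}\ge 2\gamma^t$, we get $\E[|S|]\ge \eps^t|A|$, and the expected number of bad $k$-sets is at most
\[
|A|^k\gamma^t \;\le\; \frac{\delta\eps^{kt}}{2}|A|^k \;\le\; \frac{\delta}{2}\left(\eps^t|A|\right)^k \;\le\; \frac{\delta}{2}\,\E[|S|]^k.
\]
This last step uses $\eps^{kt}|A|^k = (\eps^t|A|)^k$ but I need it in expectation against $\E[|S|]^k$, not $\E[|S|^k]$, so the honest way is to work with the difference $W := |S| - \tfrac{2}{\delta}\,|A|^{-(k-1)}\!\cdot\#\{\text{bad }k\text{-subsets of }S\}$: we have $\E[W] \ge \eps^t|A| - \tfrac{2}{\delta}|A|^{-(k-1)}|A|^k\gamma^t = \eps^t|A| - \tfrac{2}{\delta}|A|\gamma^t \ge \eps^t|A|(1 - \tfrac{2\gamma^t}{\delta\eps^t}) \ge \tfrac12\eps^t|A|$, where the final inequality uses $\delta\eps^{kt}\ge 2\gamma^t$ (and $\eps^t\ge\eps^{kt}$). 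Hence there is an outcome with $W\ge \tfrac12\eps^t|A|$; for that outcome, $|S|\ge\tfrac12\eps^t|A|$ and, since $W\le |S|$, the number of bad $k$-subsets is at most $\tfrac{\delta}{2}|A|^{k-1}|S| \le \delta|S|^k$ provided $|S|^{k-1}\ge\tfrac12|A|^{k-1}$, i.e. provided $|S|\ge|A|/2$.

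The one genuine wrinkle — and the step I expect to need the most care — is that the bound ``number of bad $k$-sets $\le \delta|S|^k$'' is demanded in terms of $|S|^k$, not $|A|^k$, and $S$ can be much smaller than $A$. I would handle this exactly as in the classical dependent random choice lemma: instead of the linear combination above, observe that $\E\big[\#\text{bad }k\text{-subsets of }S\big]\le |A|^k\gamma^t$ while $\E[|S|^k]$ is not directly useful; the right move is to fix the outcome maximizing $W$ as defined, and note bad $k$-sets are counted relative to $|A|^{k-1}|S|$, then replace the factor $|A|^{k-1}$ by $|S|^{k-1}$ at the cost of passing to a subset — alternatively, and more simply, since we only need \emph{existence} of $S$, we may first discard from consideration any outcome with $|S|<\tfrac12\eps^t|A|$ and argue on the conditional distribution, or delete one vertex from each bad $k$-set (there are few of them) to clean up $S$ while keeping $|S|\ge\tfrac12\eps^t|A|$, since $\delta|A|^{k-1}|S|/\binom{|S|}{k-1}$-type bookkeeping shows the deletions remove at most a $\delta$-fraction. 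Since $\delta<1$ and the number of deleted vertices is at most the number of bad $k$-sets which is an $o(|S|)$ quantity under our hypotheses, this cleanup costs a negligible constant factor that is absorbed into the stated $\tfrac12\eps^t|A|$ (re-deriving with a slightly larger initial constant). Everything else is routine first-moment computation.
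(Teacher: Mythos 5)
Your high-level setup is the same as the paper's: sample $t$ vertices of $B$ with replacement, let $S$ be their common neighbourhood in $A$, compute $\E[|S|]\ge \eps^t|A|$ by Jensen and $\E[X]\le |A|^k\gamma^t$ where $X$ is the number of bad $k$-subsets. The problem is in how you combine the two, and the fix you sketch does not work.

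You consider a random variable of the form $W=|S|-\tfrac{2}{\delta}|A|^{-(k-1)}X$, which is linear in $|S|$. Then, as you correctly notice, a good outcome for $W$ gives a bound on $X$ of the shape $\tfrac{\delta}{2}|A|^{k-1}|S|$, which is $\delta|S|^k$ only if $|S|\gtrsim |A|$. Since the lemma only promises $|S|\ge\tfrac12\eps^t|A|$ with $\eps$ potentially tiny, there is a genuine gap of a factor $(|A|/|S|)^{k-1}$. Your proposed repairs do not close it. In particular, deleting one vertex from each bad $k$-set is hopeless: the number of bad $k$-sets in the outcome you selected is about $\tfrac{\delta}{2}|A|^{k-1}|S|$, which for $k\ge2$ is enormously larger than $|S|$ itself, so this is certainly not ``an $o(|S|)$ quantity,'' and after the deletions nothing would remain of $S$. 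Conditioning on the event $\{|S|\ge\tfrac12\eps^t|A|\}$ also doesn't obviously control $\E[X\mid |S|\ge\tfrac12\eps^t|A|]$ in the right way. (Also, your inequality $\E[W]\ge\tfrac12\eps^t|A|$ is itself off by a factor $2$ when $k=1$, since $\delta\eps^{kt}\ge 2\gamma^t$ only yields $\gamma^t\le\delta\eps^t/2$, not $\delta\eps^t/4$; but this is minor compared to the structural issue.)

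The missing idea, and what the paper does, is to make the linear combination homogeneous of degree $k$ in $|S|$: work with $\delta|S|^k-X$ rather than $|S|-cX$. By convexity of $x\mapsto x^k$, one has $\E[|S|^k]\ge \E[|S|]^k\ge \eps^{kt}|A|^k$, so
$\E[\delta|S|^k-X]\ge \delta\eps^{kt}|A|^k-\gamma^t|A|^k\ge\tfrac12\delta\eps^{kt}|A|^k>0$.
Fixing an outcome attaining this, the positivity gives immediately both $X\le\delta|S|^k$ (taking the nonnegative term $\delta|S|^k$ alone) and $\delta|S|^k\ge\tfrac12\delta\eps^{kt}|A|^k$, hence $|S|\ge 2^{-1/k}\eps^t|A|\ge\tfrac12\eps^t|A|$. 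No cleanup, no size threshold on $|S|$ versus $|A|$. I'd encourage you to revisit the statement of dependent random choice in Fox--Sudakov style: the comparison is always between $\E[|S|^k]$ (not $\E[|S|]$) and the expected number of bad $k$-sets, precisely so that the two quantities have matching degree.
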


\begin{proof}
Choose $t$ vertices from $B$ at random with replacement and call the set of these vertices $T$. Let $S=N(T)$, the common neighbourhood of the vertices in $T$.

Now
$$\E[|S|]=\sum_{v\in A} (d(v)/|B|)^t\geq |A|(\bar{d}/|B|)^t\geq |A|\eps^t,$$
where $\bar{d}$ denotes the average degree of the vertices in $A$. Hence, $\E[|S|^{k}]\geq \E[|S|]^{k}\geq \eps^{k t}|A|^{k}$.

Write $X$ for the number of sets of $k$ vertices in $S$ which have fewer than $\gamma|B|$ common neighbours.
Note that $\E[X]\leq \binom{|A|}{k}\gamma^t\leq |A|^{k}\gamma^t$. Hence,
$$\E[\delta |S|^{k}-X]\geq \delta \eps^{kt}|A|^k-\gamma^{t} |A|^k.$$

In particular, there exists an outcome with $\delta |S|^{k}-X\geq \delta \eps^{kt}|A|^k-\gamma^{t} |A|^k$. Since $\delta\eps^{kt}\geq 2\gamma^t$, we get $\delta |S|^{k}-X\geq \frac{1}{2}\delta \eps^{kt}|A|^k$. This means on the one hand that $X\leq \delta|S|^k$, so at most $\delta|S|^k$ sets of $k$ vertices in $S$ have fewer than $\gamma|B|$ common neighbours. On the other hand, $\delta|S|^k\geq \frac{1}{2}\delta \eps^{kt}|A|^k$, so $|S|\geq \frac{1}{2^{1/k}}\eps^t|A|\geq \frac{1}{2}\eps^t|A|$.
\end{proof}

The next two results are specializations of the previous one and are suited specifically to our needs.

\begin{lemma} \label{lem:k-set drc}
    There is a constant $C=C(r)$ such that the following is true. Let $k\in \mathbb{N}$ and $0<\delta<1/2$. Let $G=(A,B)$ be a bipartite graph with $e(G)\geq \frac{1}{r} |A||B|$. Then there is a set $S\subset A$ of size at least $\delta^{C}|A|$ such that all but at most $\delta |S|^{k}$ sets of $k$ vertices in $S$ have at least $\frac{(1/r)^k}{2}|B|$ common neighbours.
\end{lemma}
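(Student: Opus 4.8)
The plan is to obtain Lemma \ref{lem:k-set drc} as a direct consequence of Lemma \ref{lem:dependent random choice}, by feeding in a careful choice of the free parameters $t,\eps,\gamma$ while keeping $k$ and $\delta$ the same. The natural choice is $\eps = 1/r$, which is legitimate because $e(G)\ge\frac1r|A||B|$, together with $\gamma = \frac{(1/r)^k}{2}$, which is exactly the common-neighbourhood threshold we want in the conclusion; these lie in $(0,1)$ once we observe that the case $r=1$ is trivial (then $G$ is complete bipartite and $S=A$ works), so we may assume $r\ge 2$.

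First I would determine how large $t$ must be. The hypothesis of Lemma \ref{lem:dependent random choice} requires $\delta\eps^{kt}\ge 2\gamma^t$, and with the above substitutions this simplifies to $\delta\ge 2^{1-t}$; hence $t = \lceil 1+\log_2(1/\delta)\rceil$ suffices, and this is a positive integer since $\delta<1/2$. Applying Lemma \ref{lem:dependent random choice} with these parameters then produces a set $S\subset A$ with $|S|\ge\frac12\eps^t|A| = \frac{1}{2r^t}|A|$ such that all but at most $\delta|S|^k$ of the $k$-subsets of $S$ have at least $\gamma|B| = \frac{(1/r)^k}{2}|B|$ common neighbours — which is the conclusion, except that the size bound is not yet written in the form $\delta^C|A|$.

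The remaining step is the routine estimate converting $\frac{1}{2r^t}$ into a power of $\delta$. Since $t\le 2+\log_2(1/\delta)$ and $r^{-\log_2(1/\delta)} = \delta^{\log_2 r}$, we get $\frac{1}{2r^t}\ge\frac{1}{2r^2}\delta^{\log_2 r}$, and then, using $\delta<1/2$, the choice $C = C(r) := 1+3\log_2 r$ gives $\delta^{\,C-\log_2 r}\le\frac{1}{2r^2}$, so that $\delta^C\le\frac{1}{2r^t}$ as required. I do not expect any genuine obstacle here: the whole argument is just a specialization of Lemma \ref{lem:dependent random choice}, and the only thing to be careful about is the bookkeeping that guarantees the hypothesis $\delta\eps^{kt}\ge 2\gamma^t$ while simultaneously keeping $t$ small enough that $\frac12\eps^t|A|$ remains at least $\delta^{C(r)}|A|$.
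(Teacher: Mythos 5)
Your proposal is correct and takes essentially the same route as the paper: set $\eps=1/r$ and $\gamma=\frac{(1/r)^k}{2}$, choose $t\approx\log_2(1/\delta)+1$ so that the hypothesis $\delta\eps^{kt}\geq 2\gamma^t$ of Lemma \ref{lem:dependent random choice} reduces to $\delta\geq 2^{1-t}$, and then observe that $\frac{1}{2}\eps^t\geq\delta^{O_r(1)}$. The only difference is cosmetic: the paper picks $t$ via $2^{t-2}\leq 1/\delta<2^{t-1}$ and leaves the exponent implicit as $O_r(1)$, whereas you take $t=\lceil 1+\log_2(1/\delta)\rceil$ and compute an explicit $C=1+3\log_2 r$, both of which check out.
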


\begin{proof}
Let $\eps=\frac{1}{r}$, $\gamma=\frac{(1/r)^k}{2}$ and choose a positive integer $t$ such that $2^{t-2}\leq 1/\delta<2^{t-1}$. Then it is easy to see that $\delta\eps^{kt}\geq 2\gamma^t$. Hence, by Lemma \ref{lem:dependent random choice}, there exists a set $S\subset A$ of size at least $\frac{1}{2}\eps^t|A|$ such that all but at most $\delta |S|^{k}$ sets of $k$ vertices in $S$ have at least $\frac{(1/r)^k}{2}|B|$ common neighbours. Since $\frac{1}{2}\eps^t\geq \delta^{O_r(1)}$, the proof is complete.
\end{proof}

\begin{lemma} \label{lem:dependentrc for pairs}
    Let $0<\eps,\delta<1$ such that $\delta\geq 2\eps^4$. Let $G=(A,B)$ be a bipartite graph with $e(G)\geq \eps |A||B|$. Then there is a set $S\subset A$ of size at least $\frac{1}{2}\eps^4|A|$ such that all but at most $\delta |S|^2$ pairs of vertices in $S$ have at least $\eps^3 |B|$ common neighbours. 
\end{lemma}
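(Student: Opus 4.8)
The plan is to derive this directly from the dependent random choice lemma, Lemma \ref{lem:dependent random choice}, by choosing the parameters appropriately. The statement of Lemma \ref{lem:dependentrc for pairs} is precisely the case $k=2$, $t=4$ of that lemma, with the same $\eps$ and $\delta$ and with the auxiliary parameter $\gamma$ set to $\eps^3$.

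Concretely, I would proceed as follows. Set $k=2$, $t=4$, and $\gamma=\eps^3$, while keeping $\eps$ and $\delta$ as given. The only hypothesis of Lemma \ref{lem:dependent random choice} that needs checking is the inequality $\delta\eps^{kt}\geq 2\gamma^t$; substituting the chosen values this reads $\delta\eps^{8}\geq 2\eps^{12}$, which is equivalent to $\delta\geq 2\eps^4$, exactly the assumption we are given (and $0<\eps,\delta<1$ is also assumed). Applying Lemma \ref{lem:dependent random choice} with these parameters then produces a set $S\subset A$ with $|S|\geq\frac{1}{2}\eps^t|A|=\frac{1}{2}\eps^4|A|$ such that all but at most $\delta|S|^k=\delta|S|^2$ of the $k$-sets, i.e.\ pairs, of vertices in $S$ have at least $\gamma|B|=\eps^3|B|$ common neighbours. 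This is precisely the conclusion required.

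Since the argument is a pure specialization, there is essentially no substantive obstacle: the only thing to verify is the elementary inequality $\delta\eps^{8}\geq 2\eps^{12}$, which follows immediately from $\delta\geq 2\eps^4$ after dividing by $\eps^8>0$. Thus the proof is short, and the bulk of the work has already been done in establishing Lemma \ref{lem:dependent random choice}.
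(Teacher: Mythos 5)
Your proof is correct and is essentially identical to the paper's own argument, which also sets $k=2$, $t=4$, $\gamma=\eps^3$ and invokes Lemma \ref{lem:dependent random choice} after noting the inequality $\delta\eps^{kt}\geq 2\gamma^t$. You simply spell out the verification of that inequality in slightly more detail.
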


\begin{proof}
Let $k=2$, $t=4$ and $\gamma=\eps^3$. Then it is easy to see that $\delta\eps^{kt}\geq 2\gamma^t$. Hence, the result follows directly from Lemma \ref{lem:dependent random choice}.
\end{proof}

Finally, we will use the following version of the Chernoff bound (see, e.g., Theorem A.1.13 in \cite{alonspencer}).

\begin{lemma} \label{lem:chernoff}
    Let $X=\sum_{i=1}^n X_i$, where $X_i=1$ with probability $p_i$ and $X_i=0$ with probability $1-p_i$, and all $X_i$ are independent. Let $\mu=\mathbb{E}[X]=\sum_{i=1}^n p_i$. Then for any $0<\delta<1$,
    $$\mathbb{P}(X\leq (1-\delta)\mu)\leq e^{-\mu \delta^2/2}.$$
\end{lemma}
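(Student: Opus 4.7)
This is a standard Chernoff-type lower-tail bound, usually proved by the moment-generating-function method (Bernstein's trick), so my plan is to give the classical derivation and then do the small amount of real-variable analysis needed to arrive at the clean form $e^{-\mu\delta^2/2}$.

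First, I would introduce a free parameter $t>0$ and apply Markov's inequality to the non-negative random variable $e^{-tX}$:
\[
\mathbb{P}(X\leq (1-\delta)\mu)=\mathbb{P}\bigl(e^{-tX}\geq e^{-t(1-\delta)\mu}\bigr)\leq e^{t(1-\delta)\mu}\,\mathbb{E}[e^{-tX}].
\]
By independence of the $X_i$, the moment generating function factors:
\[
\mathbb{E}[e^{-tX}]=\prod_{i=1}^n\mathbb{E}[e^{-tX_i}]=\prod_{i=1}^n\bigl(1-p_i+p_ie^{-t}\bigr)=\prod_{i=1}^n\bigl(1-p_i(1-e^{-t})\bigr).
\]
Applying the elementary inequality $1-x\leq e^{-x}$ factor by factor,
\[
\mathbb{E}[e^{-tX}]\leq \exp\!\left(-(1-e^{-t})\sum_{i=1}^n p_i\right)=\exp\bigl(-\mu(1-e^{-t})\bigr).
\]
Combining the two displays gives, for every $t>0$, the bound
\[
\mathbb{P}(X\leq (1-\delta)\mu)\leq \exp\bigl(\mu\bigl[t(1-\delta)-(1-e^{-t})\bigr]\bigr).
\]

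Next I would optimize in $t$. Differentiating the bracket and setting the derivative to zero yields $e^{-t}=1-\delta$, i.e.\ $t=-\ln(1-\delta)>0$, and substituting back gives
\[
\mathbb{P}(X\leq (1-\delta)\mu)\leq \exp\bigl(-\mu\bigl[\delta+(1-\delta)\ln(1-\delta)\bigr]\bigr).
\]
It then remains to verify the inequality $\delta+(1-\delta)\ln(1-\delta)\geq \delta^2/2$ for $\delta\in(0,1)$. Setting $f(\delta)=\delta+(1-\delta)\ln(1-\delta)-\delta^2/2$, I would compute $f(0)=0$, $f'(\delta)=-\ln(1-\delta)-\delta$ with $f'(0)=0$, and $f''(\delta)=\tfrac{1}{1-\delta}-1=\tfrac{\delta}{1-\delta}\geq 0$ on $[0,1)$. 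Hence $f'$ is non-decreasing from $0$, so $f'\geq 0$, so $f\geq 0$, giving the desired bound.

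There is really no hard step here; the only place where one can err is in the optimization and in signing the convexity inequality correctly, which is why I would write out $f$ and its derivatives explicitly rather than appeal to a one-line Taylor estimate.
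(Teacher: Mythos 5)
Your derivation is correct: the Markov/MGF step, the optimization $e^{-t}=1-\delta$, and the convexity verification of $\delta+(1-\delta)\ln(1-\delta)\geq\delta^2/2$ are all sound. The paper itself does not prove this lemma but simply cites Theorem A.1.13 of Alon--Spencer, whose proof is exactly this standard moment-generating-function argument, so your proposal matches the intended approach.
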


\subsection{The absorption process} \label{sec:absorption process}

We now define our absorbing structure.

\begin{definition} \label{def:absorber}
	Let $G$ be an edge-coloured complete graph and let $F$ be a bipartite subgraph with parts $X$ and $Y$. We say that $F$ is $(\eta,\theta)$-good if there exist pairwise disjoint sets $Y_1,\dots,Y_t\subset Y$ such that the following hold.
	\begin{enumerate}
		\item $F$ is monochromatic.
		\item $F$ is isomorphic to an element of $\mathcal{F}$.
		\item For every $i\in [t]$, $|Y_i|\geq \eta |Y|$.
		\item $|Y\setminus \bigcup_{i\in [t]} Y_i|\leq \theta |Y|$. \label{cond:almost cover}
		\item For any $i\in [t]$ and any distinct $y_0,y_3\in Y_i$, there exist at least $\eta|Y_i|$ pairwise disjoint pairs $(y_1,y_2)\in Y_i^2$ of distinct vertices such that the edges between $y_0$ and $N_F(y_1)$, the edges between $y_1$ and $N_F(y_2)$ and the edges between $y_2$ and $N_F(y_3)$ are all of the same colour as $F$. \label{cond:switching}
	\end{enumerate}
\end{definition}

The key result of this subsection is that good subgraphs can indeed be used to absorb certain sets.

\begin{lemma} \label{lem:good partition absorbs}
	Let $\eta,\theta,K,C>0$ satisfy $\eta\geq \exp(-C\Delta)$, $\theta\geq \exp(-\exp(C\Delta))$, $K\leq \exp(\exp(C\Delta))$. Then there exists $C'=C'(C,r)$ such that the following holds.
	
	Let $G$ be an $r$-edge coloured complete graph and let $F=(X,Y)$ be an $(\eta,\theta)$-good subgraph in colour red. Let $Z$ be a subset of $V(G)$ such that $|Z|\leq K|Y|$ and for every $z\in Z$, the number of $y\in Y$ with $N_F(y)\subset N_{\textrm{red}}(z)$ is at least $2\theta |Y|$.
	
	Then $G[X\cup Y\cup Z]$ has a monochromatic $\FF$-tiling of size at most $\exp(C'\Delta)$.
\end{lemma}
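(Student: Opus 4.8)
The plan is to follow the absorption strategy sketched in the introduction, carried out in two phases: first cover all but a tiny sliver of $Z$ by a bounded number of monochromatic subgraphs from $\FF$ using the multicolour Ramsey bound, and then use the switching structure of the good subgraph $F$ to absorb the remaining small piece of $Z$ together with all of $Y$ into a single red copy of $F$. Throughout, $C'$ will be chosen large (depending on $C$ and $r$) at the end.

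\textbf{Phase 1: shrinking $Z$.} Set $Z_0 = Z$. As long as the current uncovered set $Z_i$ satisfies $|Z_i| \geq \theta|Y|$, I would like to pull a large monochromatic subgraph out of $Z_i$ and add it to the tiling; the point is that after $O_r\big(\Delta + \log(K/\theta)\big) = \exp(O_r(\Delta))$ steps (using $\theta \geq \exp(-\exp(C\Delta))$ and $K \leq \exp(\exp(C\Delta))$, so $\log(K/\theta) \leq \exp(O_r(\Delta))$) the uncovered set has size less than $\theta|Y|$. The only subtlety is that $G[Z_i]$ alone may be small relative to $\Delta$; but $|Z_i| \geq \theta|Y|$ and, crucially, we may instead work inside $G[Y \cup Z_i]$ or simply observe that we only need to shrink $Z$ by a constant factor each time $|Z_i|$ is comparable to $|Y|$, and handle the regime $|Z_i| \le |Y|$ by a single application of Corollary~\ref{cor:mono cover} to $G[Z_i]$ with $t = \theta|Y|$: this costs $64\Delta r^\Delta(\log(1/\theta) + 2) = \exp(O_r(\Delta))$ subgraphs (using $\log(1/\theta) \le \exp(C\Delta)$, which is absorbed into the exponent $\exp(O_r(\Delta))$ — here I am using that $\exp(O_r(\exp(C\Delta)))$ would be too large, so one must be slightly careful: in fact $64\Delta r^\Delta \log(1/\theta) \le \exp(O_r(\Delta))\cdot \exp(C\Delta) = \exp(O_{C,r}(\Delta))$, which is fine). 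Thus after at most $\exp(O_{C,r}(\Delta))$ monochromatic subgraphs, we are left with an uncovered set $Z' \subset Z$ with $|Z'| < \theta|Y| =: k$... wait, I should keep $k = |Z'| \le \theta |Y|$.

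\textbf{Phase 2: absorbing $Z' \cup Y$ into a modified copy of $F$.} Let $Z' = \{z_1,\dots,z_k\}$ with $k \le \theta|Y|$. Since each $z_j$ has at least $2\theta|Y| > k$ vertices $y \in Y$ with $N_F(y) \subseteq N_{\mathrm{red}}(z_j)$, we may greedily pick \emph{distinct} $y_1^{(3)},\dots,y_k^{(3)} \in Y$ so that all edges between $z_j$ and $N_F(y_j^{(3)})$ are red. Now I distribute the indices $j \in [k]$ among the parts $Y_1,\dots,Y_t$ according to which $Y_i$ contains $y_j^{(3)}$ (discarding the $\le \theta|Y|$ indices whose $y_j^{(3)}$ lies in the uncovered remainder $Y \setminus \bigcup Y_i$ — actually these also need handling; one avoids this by first choosing the $y_j^{(3)}$ inside $\bigcup_i Y_i$, which is possible since $|Y \setminus \bigcup Y_i| \le \theta|Y| < 2\theta|Y| - k$... let me just say: since $2\theta|Y| - \theta|Y| = \theta|Y| \ge k$, each $z_j$ has $\ge k$ valid partners inside $\bigcup_i Y_i$, so we can pick the $y_j^{(3)}$ there). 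Within each $Y_i$, for each index $j$ assigned to it I repeatedly apply Condition~\eqref{cond:switching} of Definition~\ref{def:absorber}: I first find a monochromatic copy of $F_k$ inside $G\big[\bigcup_i Y_i\big]$ (legal since $\eta|Y| \ge \exp(-C\Delta)|Y|$ and $k \le \theta|Y|$ is tiny, so $|\bigcup Y_i| \ge \eta|Y| \cdot$ something... more carefully, apply Corollary~\ref{cor:big mono graph} inside the largest $Y_i$, or inside $Y_i$ once the indices are distributed, checking $|Y_i| \ge 32\Delta r^\Delta \cdot (\text{number of indices in } Y_i)$), giving vertices $u_1,\dots,u_k$; then for each $i$ and each $j$ in $Y_i$, I use Condition~\eqref{cond:switching} with $y_0 = u_j$ and $y_3 = y_j^{(3)}$ to choose a disjoint pair $(v_j, w_j) \in Y_i^2$ — the disjointness guarantee of $\eta|Y_i|$ pairs lets me do this greedily for all $j$ assigned to $Y_i$ simultaneously, since the number of such $j$ is at most $k \le \theta|Y| \le \theta\eta^{-1}|Y_i| \ll \eta|Y_i|$ (using $\theta \le \exp(-\exp(C\Delta))$ and $\eta \ge \exp(-C\Delta)$, so $\theta/\eta$ is minuscule). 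Finally, I perform the substitution of Figure~\ref{fig:switching}: replace $u_j \mapsto v_j \mapsto w_j \mapsto y_j^{(3)} \mapsto z_j$ in $F$, which yields a red subgraph of $G$ isomorphic to $F$ (hence to an element of $\FF$) with vertex set $X \cup \big(Y \setminus \{u_1,\dots,u_k\}\big) \cup Z'$. Combined with the monochromatic copy of $F_k$ on $\{u_1,\dots,u_k\}$ and the Phase-1 subgraphs covering $Z \setminus Z'$, this is an $\FF$-tiling of $G[X \cup Y \cup Z]$ of size at most $\exp(O_{C,r}(\Delta)) =: \exp(C'\Delta)$.

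\textbf{Main obstacle.} The delicate point is the bookkeeping that keeps \emph{all} the chosen vertices $u_j, v_j, w_j, y_j^{(3)}$ distinct across \emph{all} $j \in [k]$ and all parts $Y_i$, while simultaneously respecting the colour conditions — this is exactly why Definition~\ref{def:absorber}\eqref{cond:switching} demands $\eta|Y_i|$ \emph{pairwise disjoint} pairs rather than just many pairs, and why we need $k$ to have been driven down below $\theta|Y|$ in Phase~1 with $\theta$ exponentially small compared to $\eta$. The other place requiring care is verifying at each invocation of Corollary~\ref{cor:big mono graph} / Corollary~\ref{cor:mono cover} that the relevant vertex set is large enough relative to $\Delta r^\Delta$ times the order of the subgraph sought; this is where the quantitative gaps between $|Y|$, $\eta|Y|$, $\theta|Y|$, and $k$ are spent, and it is routine but must be tracked to confirm the final bound is genuinely of the form $\exp(C'\Delta)$ with $C'$ depending only on $C$ and $r$.
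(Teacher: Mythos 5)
Your high-level strategy matches the paper's: first peel off most of $Z$ with Corollary~\ref{cor:mono cover}, then absorb the small remainder $Z'$ together with $Y$ into a modified red copy of $F$ via the four-vertex switching of Condition~\eqref{cond:switching}. The paper organizes this by first partitioning $Z$ into $Z_1,\dots,Z_t$ (with $Z_i$ assigned to $Y_i$ by a pigeonhole using Condition~\eqref{cond:almost cover}) and then invoking an abstract ``switch matching'' lemma for each pair $(Y_i,Z_i)$ separately; you instead choose the targets $y_j^{(3)}\in\bigcup_i Y_i$ globally and then distribute. These are essentially the same argument, and either bookkeeping can be made to work.

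However, there is a genuine gap in Phase~1, which does not drive $|Z'|$ small enough. You stop as soon as $|Z'|<\theta|Y|$, but Phase~2 actually needs $|Z'|$ to be small compared to $\eta|Y_i|$ (so that the disjoint pairs from Condition~\eqref{cond:switching} can be selected greedily while avoiding each other and the $u_j$'s and $y_j^{(3)}$'s) and also compared to $|Y_i|/(32\Delta r^{\Delta})$ (so that Corollary~\ref{cor:big mono graph} applies inside $Y_i$). Since $|Y_i|$ can be as small as $\eta|Y|$, you in fact need something like $|Z'|\lesssim \frac{\eta^2}{\Delta r^\Delta}|Y|$. Your justification for why $\theta|Y|$ suffices reads ``using $\theta\leq\exp(-\exp(C\Delta))$ and $\eta\geq\exp(-C\Delta)$, so $\theta/\eta$ is minuscule,'' but the lemma's hypothesis is $\theta\geq\exp(-\exp(C\Delta))$ — a \emph{lower} bound. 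There is no upper bound on $\theta$ (beyond the trivial $\theta\le 1/2$), and indeed the application in Lemma~\ref{lem:find good subgraph} has $\theta=\frac{1}{2\Delta^2(32r)^\Delta}$ and $\eta=\Omega(\theta\eps^{27})$, so $\theta$ is typically \emph{larger} than $\eta$. The fix is simple and costs nothing: stop Phase~1 only when $|Z'|<\min\bigl(\theta,\ \tfrac{\eta}{100},\ \tfrac{1}{32\Delta r^\Delta}\bigr)\,\eta|Y|$. All three quantities are at least $\exp(-\exp(O_{C,r}(\Delta)))$, so $\log(|Z|/|Z'|)\le\exp(O_{C,r}(\Delta))$ and the Phase-1 cost remains $64\Delta r^\Delta\cdot\exp(O_{C,r}(\Delta))=\exp(O_{C,r}(\Delta))$. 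This is precisely the role of the minimum $t=\min(\frac{\eta}{100}|Y|,\theta|Y|,\frac{1}{32\Delta r^\Delta}|Y|,|Z|)$ in the paper's Lemma~\ref{lem:switch matching}. (A second, minor omission: one should first replace $Z$ by $Z\setminus(X\cup Y)$ so that $Z$ is disjoint from $X\cup Y$, and when the chosen $u_j$'s happen to coincide with some $y_{j'}^{(3)}$'s one must reorder so the coincidences are handled as fixed points, as the paper does.)
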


To prove this result, we use the following technical lemma.

\begin{lemma} \label{lem:switch matching}
    Let $\eta,\theta,K,C>0$ satisfy $\eta,\theta\geq \exp(-\exp(C\Delta))$ and $K\leq \exp(\exp(C\Delta))$. Then there exists a constant $C'=C'(C,r)$ such that the following is true.
    
    Let $G$ be an $r$-edge coloured complete graph, let $Y$ and $Z$ be disjoint subsets of $V(G)$ and let $\sim$ be a binary relation defined over $Y\times (Y\cup Z)$ (i.e. a subset of $Y\times (Y\cup Z)$) which satisfies that $y\sim y$ for all $y\in Y$. Assume that for any distinct $x,y\in Y$, there exist at least $\eta|Y|$ pairwise disjoint pairs $(z,w)\in Y^2$ of distinct vertices such that $x\sim z$, $z\sim w$ and $w\sim y$. Assume also that $|Z|\leq K|Y|$ and that for every $z\in Z$, there are at least $\theta |Y|$ vertices $y\in Y$ such that $y\sim z$.
    
    Then there exists an injective map $f:Y\rightarrow Y\cup Z$ such that $y\sim f(y)$ for all $y\in Y$ and $G[Y\cup Z\setminus f(Y)]$ has a monochromatic $\FF$-tiling of size at most $\exp(C'\Delta)$.
\end{lemma}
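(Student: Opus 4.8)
The plan is to first cover almost all of $Z$ using a few monochromatic subgraphs from $\FF$, and then absorb the small leftover using the switching condition. First I would iterate the following step: as long as the uncovered part $Z'$ of $Z$ has size at least $\theta|Y|$, use Corollary \ref{cor:big mono graph} to find a large monochromatic copy of $F_\ell$ inside $G[Z']$ and add it to the tiling; since the relation $\sim$ is not needed here, this is just repeated application of the multicolour Ramsey bound exactly as in Corollary \ref{cor:mono cover}. Because $|Z|\leq K|Y|$ with $K\leq \exp(\exp(C\Delta))$ and we stop once $|Z'|<\theta|Y|$ with $\theta\geq \exp(-\exp(C\Delta))$, the number of steps is $O\bigl(\Delta r^\Delta \log(K/\theta)\bigr)=\exp(O_r(\Delta))$. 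So after this phase we have a set $Z'\subseteq Z$ with $|Z'|<\theta|Y|$ covered by at most $\exp(O_r(\Delta))$ subgraphs, and it remains to find an injection $f\colon Y\to Y\cup Z'$ with $y\sim f(y)$ such that $G[(Y\cup Z')\setminus f(Y)]$ has a small monochromatic tiling; note $(Y\cup Z)\setminus f(Y)$ is then $Z\setminus Z'$ together with $(Y\cup Z')\setminus f(Y)$, and both pieces are tiled.

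Next I would handle $Z'$. Write $Z'=\{z_1,\dots,z_k\}$ with $k<\theta|Y|$. Using that for every $z\in Z'$ there are at least $\theta|Y|>k$ vertices $y\in Y$ with $y\sim z$, I greedily pick distinct $y_1,\dots,y_k\in Y$ with $y_i\sim z_i$ for each $i$ (at step $i$, fewer than $k$ vertices are used and at least $\theta|Y|\geq k$ are available). Now I want to "free up" the vertices $y_1,\dots,y_k$ so they can be mapped to $z_1,\dots,z_k$, while keeping an injection on all of $Y$. Since $k$ is much smaller than $|Y|$, by Corollary \ref{cor:big mono graph} there is a monochromatic copy of $F_k$ inside $G[Y]$; let its vertex set be $\{u_1,\dots,u_k\}$ — this copy will be one of the tiles. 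For each $i$, I apply the three-step switching hypothesis to the pair $(u_i,y_i)$ (taking $x=u_i$, $y=y_i$) to obtain a pair $(v_i,w_i)\in Y^2$ with $u_i\sim v_i$, $v_i\sim w_i$, $w_i\sim y_i$; the hypothesis gives $\eta|Y|$ pairwise disjoint such pairs, so I can choose them greedily to keep all the $v_i,w_i$ distinct from each other and from $\{u_1,\dots,u_k\}\cup\{y_1,\dots,y_k\}$, provided $4k$-ish many vertices are avoidable, which holds since $\eta|Y|\gg k$. Then the map sending $u_i\mapsto v_i$, $v_i\mapsto w_i$, $w_i\mapsto y_i$, $y_i\mapsto z_i$ for each $i$, and $y\mapsto y$ for all other $y\in Y$, is the desired injection $f\colon Y\to Y\cup Z'$: it satisfies $y\sim f(y)$ (using $y\sim y$ for the fixed points), it is injective because the $4k$ vertices $u_i,v_i,w_i,y_i$ are all distinct and the $z_i$ lie outside $Y$, and its image misses exactly $\{u_1,\dots,u_k\}$ inside $Y$ and all of $Z'$, so $(Y\cup Z')\setminus f(Y)=\{u_1,\dots,u_k\}$, which is covered by the single tile $F_k$.

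Finally I would collect the tiles: the $\exp(O_r(\Delta))$ subgraphs covering $Z\setminus Z'$ from the first phase, plus the single copy of $F_k$ on $\{u_1,\dots,u_k\}$. This gives a monochromatic $\FF$-tiling of $G[(Y\cup Z)\setminus f(Y)]$ of size $\exp(C'\Delta)$ for a suitable $C'=C'(C,r)$, which is what we need. The main technical point to be careful about is the bookkeeping in the greedy choices: I must verify that at every stage the number of already-used or forbidden vertices is genuinely smaller than the number of available ones. For the choice of $y_1,\dots,y_k$ this needs $k<\theta|Y|$, which holds by construction of $Z'$. For the switching pairs it needs roughly $4k\leq \eta|Y|$; since $k<\theta|Y|$ and $\theta$ can be as large as allowed while $\eta\geq\exp(-C\Delta)$ is relatively large, one checks $4\theta\le\eta$ under the stated bounds $\eta,\theta\geq\exp(-\exp(C\Delta))$ — actually here one should be slightly more careful, using that the relevant comparison is between $k$ and $\eta|Y|$ and reducing $Z'$ further in the first phase (stopping at $|Z'|<\eta|Y|/5$, say, rather than $\theta|Y|$) if necessary; this only changes the first-phase count by a constant factor in the exponent and so is absorbed into $C'$. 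Everything else is a direct substitution into the hypotheses.
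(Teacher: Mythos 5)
Your approach is structurally the same as the paper's: cover almost all of $Z$ by repeated Ramsey, then switch the small remainder $Z'$ into $Y$ via the three-step relation and tile the freed-up set $\{u_1,\dots,u_k\}$ with a single copy of $F_k$. However there is a concrete gap in the construction of the map $f$. You pick $y_1,\dots,y_k\in Y$ with $y_i\sim z_i$, then find a monochromatic copy of $F_k$ on $\{u_1,\dots,u_k\}\subset Y$, and define $f$ by $u_i\mapsto v_i$, $v_i\mapsto w_i$, $w_i\mapsto y_i$, $y_i\mapsto z_i$, identity elsewhere. Nothing forces $\{u_1,\dots,u_k\}$ to be disjoint from $\{y_1,\dots,y_k\}$, and if some $u_j=y_{j'}$ the map is ill-defined (you correctly arrange disjointness of the $v_i,w_i$ from these sets, but not disjointness of the $u$'s from the $y$'s). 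The paper deals with exactly this via a reordering argument (matching up the intersection $R\cap g(T)$ so that $x_i=y_i$ for $i>\ell$ and then defining $f(x_i)=g^{-1}(y_i)$ directly on those indices); the simpler fix within your scheme is to find $F_k$ inside $G[Y\setminus\{y_1,\dots,y_k\}]$, but that then requires $k\le |Y|/(32\Delta r^\Delta+1)$, a constraint you do not impose.

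This points to the second, quantitative gap. You stop the cover phase when $|Z'|<\theta|Y|$, which gives $k<\theta|Y|$; but you also need $k$ small compared to $\eta|Y|$ (for the greedy switching choices) and compared to $|Y|/(32\Delta r^\Delta)$ (to embed $F_k$ in $G[Y]$ via Corollary \ref{cor:big mono graph}). Your claim that $4\theta\le\eta$ follows from $\eta,\theta\ge\exp(-\exp(C\Delta))$ is false — those are only lower bounds on both parameters, with no relation between them — and you seem to notice this, proposing to instead stop at $|Z'|<\eta|Y|/5$. But if $\eta>5\theta$ this breaks the bound $k<\theta|Y|$ needed to choose the $y_i$'s greedily, and it still ignores the Ramsey constraint. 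The clean resolution, as in the paper, is to set the stopping threshold to $t=\min\bigl(\tfrac{\eta}{100}|Y|,\ \theta|Y|,\ \tfrac{1}{32\Delta r^\Delta}|Y|,\ |Z|\bigr)$; each term in the minimum is at least $\exp(-\exp(O(C\Delta)))\,|Y|$, so $\log(|Z|/t)=\exp(O(C\Delta))$ and the size of the resulting tiling is still $\exp(O_{C,r}(\Delta))$.
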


\begin{proof}
Let $t=\min(\frac{\eta}{100} |Y|,\theta |Y|,\frac{1}{32\Delta r^{\Delta}}|Y|,|Z|)$. Applying Corollary \ref{cor:mono cover}, we find that there exists a collection of at most $64\Delta r^{\Delta}(\log(|Z|/t)+2)$ pairwise vertex-disjoint monochromatic subgraphs in $G[Z]$, each isomorphic to an element of $\mathcal{F}$, whose union is $Z\setminus T$ for some $T\subset Z$ of size at most $t$.

Since $|T|\leq t\leq \theta |Y|$ and for every $u\in T$ there are at least $\theta |Y|$ vertices $y\in Y$ with $y\sim u$, we can find an injective map $g:T\rightarrow Y$ such that $g(u)\sim u$ for all $u\in T$. Also, $|T|\leq t\leq \frac{1}{32\Delta r^{\Delta}}|Y|$, so by Corollary~\ref{cor:big mono graph}, there exists a monochromatic copy of $F_{|T|}$ in $G[Y]$. Let $R=\{x_1,x_2,\dots,x_{|T|}\}$ be the vertex set of this copy of $F_{|T|}$ and let $g(T)=\{y_1,y_2,\dots,y_{|T|}\}\subset Y$. After reordering the vertices if necessary, we may assume that there exists $0\leq \ell\leq |T|$ such that $x_i=y_i$ for all $\ell<i\leq |T|$ but $\{x_1,\dots,x_{\ell}\}\cap \{y_1,\dots,y_{\ell}\}=\emptyset$. Since $\ell\leq |T|\leq t\leq \frac{\eta}{100}|Y|$, we can choose pairwise disjoint vertices $z_1,w_1,z_2,w_2,\dots,z_{\ell},w_{\ell}$ in $Y\setminus \{x_1,x_2,\dots,x_{\ell},y_1,y_2,\dots,y_{\ell}\}$ such that for every $1\leq i\leq \ell$, $x_i\sim z_i$, $z_i\sim w_i$ and $w_i\sim y_i$.

Now define $f(v)=v$ for every $v\in Y\setminus (\bigcup_{i\leq \ell} \{x_i,y_i,z_i,w_i\}\cup \bigcup_{\ell<i\leq |T|} \{x_i\})$, let $f(x_i)=g^{-1}(y_i)$ for every $\ell<i\leq |T|$ and let $f(x_i)=z_i$, $f(z_i)=w_i$, $f(w_i)=y_i$ and $f(y_i)=g^{-1}(y_i)$ for every $i\leq \ell$. Then $v\sim f(v)$ holds for all $v\in Y$. Moreover, $f$ is a bijection between $Y$ and $(Y\setminus R)\cup T$, so $Y\cup Z\setminus f(Y)=R\cup (Z\setminus T)$. Hence $G[Y\cup Z\setminus f(Y)]$ has a monochromatic $\FF$-tiling of size at most $64\Delta r^{\Delta}(\log(|Z|/t)+2)+1$. It is not hard to see that the choice of $t$ implies the lemma. 
\end{proof}

\begin{proof}[Proof of Lemma \ref{lem:good partition absorbs}]
	First note that we may assume (by replacing $Z$ with $Z\setminus (X\cup Y)$ if necessary) that $Z$ is disjoint from $X\cup Y$.
	
	Let $Y_1,\dots,Y_t\subset Y$ witness the $(\eta,\theta)$-goodness of $F$. Fix some $z\in Z$. By assumption, there are at least $2\theta|Y|$ vertices $y\in Y$ with $N_F(y)\subset N_{\textrm{red}}(z)$. By condition (\ref{cond:almost cover}) of Definition \ref{def:absorber}, we have that the number of $y\in \cup_i Y_i$ such that $N_F(y)\subset N_{\textrm{red}}(z)$ is at least $\theta |Y|$. Thus, there exists $i\in [t]$ such that the number of $y\in Y_i$ with $N_F(y)\subset N_{\textrm{red}}(z)$ is at least $\theta |Y_i|$.
	
	Hence, we can obtain a partition $Z=\cup_{i=1}^t Z_i$ such that for every $z\in Z_i$ the number of $y\in Y_i$ with $N_F(y)\subset N_{\textrm{red}}(z)$ is at least $\theta |Y_i|$.
	
	For every $i\in [t]$, we define a binary relation $\sim_i$ on $Y_i\times (Y_i\cup Z_i)$ as follows. For $u\in Y_i$ and $v\in Y_i\cup Z_i$, we take $u\sim_i v$ if and only if $N_F(u)\subset N_{\textrm{red}}(v)$. Since $F$ is monochromatic red, we have $u\sim_i u$ for all $u\in Y_i$. By condition (\ref{cond:switching}) from Definition \ref{def:absorber}, for any distinct $x,y\in Y_i$, there exist at least $\eta |Y_i|$ pairwise disjoint pairs $(z,w)\in Y_i^2$ of distinct vertices such that $x\sim_i z$, $z\sim_i w$ and $w\sim_i y$. Moreover, for every $z\in Z_i$ there are at least $\theta |Y_i|$ vertices $y\in Y_i$ such that $y\sim_i z$. Finally, $|Z_i|\leq |Z|\leq K|Y|\leq K\eta^{-1} |Y_i|$. It follows by Lemma~\ref{lem:switch matching} that there exists an injection $f_i:Y_i\rightarrow Y_i\cup Z_i$ with $y\sim_i f_i(y)$ for all $y\in Y_i$ such that $G[Y_i\cup Z_i\setminus f_i(Y_i)]$ has a monochromatic $\FF$-tiling of size at most $\exp(C''\Delta)$, where $C''$ depends only on $C$ and $r$.
	
	Since clearly $t\leq \eta^{-1}$, $G[\bigcup_{i=1}^t (Y_i\cup Z_i\setminus f_i(Y_i))]$ has a monochromatic $\FF$-tiling of size at most $\eta^{-1}\exp(C''\Delta)$. The remaining set of vertices in $X\cup Y\cup Z$ is precisely $X\cup (Y\setminus \cup_{i=1}^t Y_i)\cup (\cup_{i=1}^t f_i(Y_i))$. We claim that this is the vertex set of a red subgraph isomorphic to $F$. Indeed, for every $y\in Y_i$ we have $y\sim_i f_i(y)$, so the edges in $G$ between $f_i(y)$ and $N_F(y)$ are all red. Hence, we can replace every $y\in Y_i$ (for every $i$) in $F$ with $f_i(y)$ and we get another red subgraph isomorphic to $F$ with vertex set $X\cup (Y\setminus \cup_{i=1}^t Y_i)\cup (\cup_{i=1}^t f_i(Y_i))$.
\end{proof}

\subsection{Constructing the absorbers} \label{sec:construct absorber}

In view of the previous subsection, we want to find certain good subgraphs. The key condition in Definition \ref{def:absorber} is property (\ref{cond:switching}). The next lemma will be used to obtain subsets which satisfy this property.

\begin{lemma} \label{lem:find one good set}
For any $0<\eps<1/100$ there exists a positive constant $c=c(\eps)=\Omega(\eps^{27})$ such that the following is true. Let $H=(A,B)$ be a bipartite graph with $|A|\leq |B|$. Suppose that for every $x\in A$, $|N(x)|\geq \eps|B|$. Then there exists a set $S\subset A$ of size at least $c|A|$ and a matching $f:S\rightarrow B$ in $H$ such that for any distinct $x,y\in S$, there are at least $c|S|$ pairwise disjoint pairs $(z,w)\in S^2$ of distinct vertices such that $f(x)\in N(z)$, $f(z)\in N(w)$ and $f(w)\in N(y)$.
\end{lemma}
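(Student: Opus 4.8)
The goal is to find, inside the bipartite graph $H=(A,B)$ with all left-degrees at least $\eps|B|$, a linear-sized set $S\subseteq A$ with a matching $f\colon S\to B$ so that every pair of vertices of $S$ has many disjoint ``switching pairs'' $(z,w)$. The natural approach is a two-step application of dependent random choice (via Lemma \ref{lem:dependentrc for pairs}) followed by a greedy matching. The crucial object to produce is a large set $S\subseteq A$ which is ``robustly connected'' in the sense that for any $x\in S$, very many $z\in S$ have $f(x)\in N(z)$ — and moreover this happens simultaneously for a positive proportion of pairs. First I would pass to a subset on which the degree condition is upgraded: applying Lemma \ref{lem:dependentrc for pairs} to $H$ (with $\eps$ as given, $\delta$ a small constant multiple of $\eps^4$), I obtain $S_0\subseteq A$ of size $\Omega(\eps^4)|A|$ in which all but a $\delta$-fraction of pairs have $\Omega(\eps^3)|B|$ common neighbours. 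The point of having common neighbours of \emph{pairs} is that once I fix a matching partner $f(x)\in B$ for $x$, I can ask which $z$ have $f(x)\in N(z)$, i.e. which $z$ are ``reachable'' from $x$; a common-neighbour condition on pairs of $A$-vertices does not immediately give this, so the matching has to be built carefully.

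Here is how I would build $f$ and verify the switching property. Work inside $S_0$. For each $x\in S_0$, consider its neighbourhood $N(x)\subseteq B$, which has size $\ge \eps|B|$. I want to choose $f(x)\in N(x)$ such that $f(x)$ has large degree \emph{back into} $S_0$ — specifically $|N(f(x))\cap S_0|\ge \eps'|S_0|$ for some $\eps'=\Omega(\eps)$. Since $e(H[S_0,B])\ge \eps|S_0||B|$ (every vertex of $S_0$ still has $\ge\eps|B|$ neighbours), the number of $b\in B$ with $|N(b)\cap S_0|\ge \tfrac{\eps}{2}|S_0|$ accounts for at least $\tfrac{\eps}{2}|S_0||B|$ of the edges, hence for at least a $\tfrac{\eps}{2}$-fraction of each $N(x)$ on average — but I need it for \emph{every} $x$. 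A cleaner route: restrict to $B'=\{b\in B: |N(b)\cap S_0|\ge \tfrac{\eps}{2}|S_0|\}$; then every $x\in S_0$ has $|N(x)\cap B'|\ge \tfrac{\eps}{2}|B|$ (since the edges from $x$ to $B\setminus B'$ number at most $\tfrac{\eps}{2}|B|$, as those $b$ are counted among all $x$, wait — this needs the averaging done globally). Doing this properly, one runs dependent random choice a second time, or iterates a cleaning step, to arrive at a set $S\subseteq S_0$, still of size $\Omega(\eps^{O(1)})|A|$, and a choice of $f(x)\in N(x)\cap B$ for each $x\in S$ with $|N(f(x))\cap S|\ge c_1|S|$ for a suitable $c_1=\Omega(\eps^{O(1)})$, while also retaining that most pairs in $S$ have many common neighbours in $B$ — actually what I really want is: for most pairs $(x,y)\in S^2$, the set of $w\in S$ with $f(w)\in N(y)$ is large (call such $w$ ``good for $y$''), and $x$ has many $z\in S$ with $f(x)\in N(z)$. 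The first holds for all but few $y$ by the same degree argument applied to $f(w)$; the second holds for all $x$ by construction.

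Given such an $S$ and $f$, the switching property is a counting exercise. Fix distinct $x,y\in S$. Let $Z_x=\{z\in S: f(x)\in N(z)\}$, so $|Z_x|\ge c_1|S|$, and let $W_y=\{w\in S: f(w)\in N(y)\}$; arranging for $|W_y|\ge c_1|S|$ for all $y$ is again a degree/cleaning condition on $S$. For each $z\in Z_x$, I need $w\in W_y$ with $f(z)\in N(w)$; the number of such $w$ is at least $|W_y|-|S\setminus N(f(z))|$, wait — I need $|\{w: f(z)\in N(w)\}\cap W_y|$ to be large, which holds provided $f(z)$ has large back-degree $|N(f(z))\cap S|\ge c_1|S|$, which is exactly the property built into $f$; then $|\{w\in W_y: f(z)\in N(w)\}|\ge c_1|S|+c_1|S|-|S|$, which is positive (in fact $\Omega(|S|)$) once $c_1>1/2$ — but $c_1$ is small, so this naive bound fails, and instead I should pick $f$ so that back-degrees are $\ge(1-c_2)|S|$ for small $c_2$, which is false in general. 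So the honest version: I need, for each $z\in Z_x$, the count $|\{w\in W_y : z\in N(f(w))\}|$ to be $\Omega(|S|)$; this is a statement about the bipartite graph between $Z_x$ and $W_y$ given by $z\sim w \iff z\in N(f(w))$, and it has $\ge (\text{avg back-degree})\cdot|W_y|$ edges, so at least $\Omega(|S|)$ of the $z\in Z_x$ have $\Omega(|S|)$ partners — this is enough, since I only need $c|S|$ disjoint pairs $(z,w)$, not all $z$ to work. Finally, to extract $c|S|$ \emph{pairwise disjoint} pairs $(z,w)$ I take any such $z$, pick a valid $w$, delete both, and repeat; each step removes $2$ vertices and only destroys $O(1)$ candidates per remaining vertex on average, so a greedy argument yields $\Omega(|S|)$ disjoint pairs, giving the claimed $c=\Omega(\eps^{27})$ after tracking the polynomial losses (two DRC steps cost $\eps^4\cdot\eps^4$-ish, the cleaning costs more, and the final disjointness extraction a constant factor).

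\textbf{Main obstacle.} The delicate point is not the dependent random choice or the Chernoff-style concentration — it is arranging the matching $f$ so that the \emph{composed} reachability ($x\to f(x)\to z\to f(z)\to w\to f(w)\to y$) works for \emph{every} ordered pair $(x,y)$, not just most. The common-neighbour guarantees from dependent random choice are ``for all but a $\delta$-fraction of pairs,'' and one must ensure that the exceptional pairs are small enough and that the greedy disjoint-pair extraction is not derailed by them; this forces the cleaning/iteration step where I discard vertices whose back-degree into the surviving set is too small, and one has to check this iteration terminates with a linear-sized set rather than collapsing. Getting the exponent down to $27$ rather than something larger is purely bookkeeping, but confirming that every one of the finitely many ``bad pair'' sets stays below the threshold needed for the greedy matching is where the real care lies.
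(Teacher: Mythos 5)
Your high-level plan (one application of dependent random choice for pairs, a cleaning step, then a matching $f$ into $B$ with a final counting argument) matches the structure of the paper's proof, and you correctly identify the hard step. But the counting argument you sketch has a genuine gap that the paper's proof resolves with an idea you explicitly set aside.

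The gap is in the sentence claiming that the auxiliary bipartite graph between $Z_x=\{z\in S: f(x)\in N(z)\}$ and $W_y=\{w\in S:f(w)\in N(y)\}$, with $z\sim w$ iff $z\in N(f(w))$, has at least $(\text{avg back-degree})\cdot|W_y|$ edges. The edge count is $\sum_{w\in W_y}|N(f(w))\cap Z_x|$, and a lower bound on $|N(f(w))\cap S|$ gives you nothing about $|N(f(w))\cap Z_x|$: both $Z_x$ and $N(f(w))\cap S$ are small (roughly $c_1$) fractions of $S$ and can, a priori, be disjoint. Nothing in the back-degree conditions you impose on $f$ links the set $Z_x$ (determined by $f(x)$) to the sets $N(f(w))$ for $w\in W_y$. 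So ``at least $\Omega(|S|)$ of the $z\in Z_x$ have $\Omega(|S|)$ partners'' does not follow, and the greedy extraction of disjoint pairs has no starting point.

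What the paper actually does is precisely what you dismiss as not being the hard part: it randomizes the matching and uses Chernoff plus a union bound over all ordered pairs $(x,y)$. After one application of Lemma~\ref{lem:dependentrc for pairs} and a cleaning step to $S_1$, one defines $B'$ as the vertices of $B$ with back-degree at least $\delta|S_1|$ into $S_1$, and shows (using the cleaned pair-common-neighbourhood property) that for every $x\in S_1$, all but $|S_1|/3$ vertices $y\in S_1$ satisfy $|N(x)\cap N(y)\cap B'|\geq\frac{\eps^3}{2}|B|$. Then $g(x)$ is chosen \emph{uniformly at random} from $N(x)\cap B'$. For a fixed pair $(x,y)$, conditioning on $g(x)=u\in B'$, one greedily finds $\Omega(\delta|S_1|)$ disjoint pairs $(z,w)$ with $z\in N(u)$ and both $|N(z)\cap N(w)\cap B'|$ and $|N(w)\cap N(y)\cap B'|$ at least $\frac{\eps^3}{2}|B|$; for each such pair, the events $g(z)\in N(w)$ and $g(w)\in N(y)$ each hold with probability at least $\eps^3/2$ (since, e.g., the probability that a uniformly random element of $N(z)\cap B'$ lands in $N(z)\cap N(w)\cap B'$ is at least $\frac{\eps^3}{2}|B|/|B|$), and these events are independent across disjoint pairs. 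Chernoff then gives failure probability $\exp(-\Omega(\eps^9|S_1|))$ for a fixed $(x,y)$, and a union bound over the $\leq|S_1|^2$ pairs finishes the claim. The non-injectivity of $g$ is fixed afterwards by a first-moment argument, which uses the assumption $|A|/|B|$ small to show few collisions. The essential point you are missing is that it is the common-neighbourhood guarantee on \emph{pairs} (the output of Lemma~\ref{lem:dependentrc for pairs}), not any back-degree condition on the matching, that controls the events $g(z)\in N(w)$; and this control is probabilistic, so the simultaneous statement for all $(x,y)$ must come from concentration plus a union bound, not from a deterministic counting argument.
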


\begin{proof}
First note that by replacing $A$ with a suitable subset of size $\frac{1}{128}\eps^{13}|A|$, it suffices to prove that the conclusion of the lemma holds for some $c=\Omega(\eps^{14})$ assuming that $|A|/|B|\leq \frac{1}{128}\eps^{13}$. Moreover, we may assume that $|A|\geq C\eps^{-14}$ for some sufficiently large absolute constant $C$ (else the statement becomes trivial). 

Let $\delta$ be chosen so that $\delta+\frac{2\delta}{\eps^3}=1/3$. Then $\delta/8\geq 2\eps^4$, so by Lemma \ref{lem:dependentrc for pairs} (applied with $\delta/8$ in place of $\delta)$, there exists a set $S_0\subset A$ of size at least $\frac{1}{2}\eps^4|A|$ such that the number of ordered pairs $(x,y)\in S_0^2$ for which $|N(x)\cap N(y)|<\eps^3|B|$ is at most $\frac{\delta}{4}|S_0|^2$. Let $S_1$ be the collection of those vertices $x\in S_0$ for which there are at most $\frac{\delta}{2}|S_0|$ vertices $y\in S_0$ satisfying $|N(x)\cap N(y)|<\eps^3|B|$. By a double counting argument, $|S_0\setminus S_1|\leq |S_0|/2$, so $|S_1|\geq |S_0|/2$. Hence, for any $x\in S_1$ there are at most $\delta |S_1|$ vertices $y\in S_1$ for which $|N(x)\cap N(y)|< \eps^3|B|$. Let $B'$ be the set of vertices $u\in B$ for which $|N(u)\cap S_1|\geq \delta |S_1|$. Observe that for any $x\in S_1$, $e(N(x)\setminus B',S_1)\leq |B|\delta |S_1|$, so the number of vertices $y\in S_1$ with $|N(x)\cap N(y)\setminus B'|\geq \frac{\eps^3}{2}|B|$ is at most $\frac{|B|\delta |S_1|}{\frac{\eps^3}{2}|B|}=\frac{2\delta}{\eps^3}|S_1|$. It follows that for any $x\in S_1$, the number of vertices $y\in S_1$ with $|N(x)\cap N(y)\cap B'|<\frac{\eps^3}{2}|B|$ is at most $(\delta+\frac{2\delta}{\eps^3})|S_1|=|S_1|/3$. This implies that for any $x,y\in S_1$, there are at least $|S_1|/3$ vertices $z\in S_1$ such that $|N(x)\cap N(z)\cap B'|\geq \frac{\eps^3}{2}|B|$ and $|N(y)\cap N(z)\cap B'|\geq \frac{\eps^3}{2}|B|$.

\noindent \emph{Claim.} For each $x\in S_1$, let $g(x)$ be a random neighbour of $x$ in $B'$.
Then with probability at least $2/3$, for any $x,y\in S_1$, there are at least $\frac{1}{16} \eps^{10} |S_1|$ pairwise disjoint pairs $(z,w)\in S_1^2$ of distinct vertices such that $g(x)\in N(z)$, $g(z)\in N(w)$ and $g(w)\in N(y)$.

\noindent \emph{Proof of Claim.} Let us estimate the probability that such pairs do not exist for some fixed $x$ and $y$. Condition on the event $g(x)=u$. Since $u\in B'$, we have $|N(u)\cap S_1|\geq \delta |S_1|$. Moreover, for any $z\in N(u)\cap S_1$, there are at least $|S_1|/3$ vertices $w\in S_1$ such that $|N(z)\cap N(w)\cap B'|\geq \frac{\eps^3}{2}|B|$ and $|N(w)\cap N(y)\cap B'|\geq \frac{\eps^3}{2}|B|$. Thus, we can greedily find at least $\delta |S_1|/2$ pairwise disjoint pairs $(z,w)\in S_1^2$ of distinct vertices such that $z\in N(u)$, $|N(z)\cap N(w)\cap B'|\geq \frac{\eps^3}{2}|B|$ and $|N(w)\cap N(y)\cap B'|\geq \frac{\eps^3}{2}|B|$. All but at most two such pairs are disjoint from $\{x,y\}$. For such a pair $(z,w)$, the probability that $g(z)\in N(w)$ and $g(w)\in N(y)$ is at least $(\frac{\eps^3}{2})^2$. Moreover, for disjoint pairs $(z,w)$ these events are independent. Hence, by a standard application of the Chernoff bound (Lemma \ref{lem:chernoff}), the probability that the number of good pairs $(z,w)$ is less than $\frac{1}{2}\cdot \frac{\delta |S_1|}{2} \cdot (\frac{\eps^3}{2})^2$ is at most $\exp(-\Omega(\delta \eps^6 |S_1|))\leq \exp(-\Omega(\eps^9 |S_1|))$. Thus, by the union bound, using that $|S_1|\geq |S_0|/2 \geq \eps^4|A|/4\geq C'\eps^{-10}$ for some sufficiently large $C'$, we have that with probability at least $2/3$, for any $x,y\in S_1$, there are at least $\frac{1}{2}\cdot \frac{\delta |S_1|}{2} \cdot (\frac{\eps^3}{2})^2=\frac{1}{16}\delta \eps^6 |S_1|$ pairwise disjoint pairs $(z,w)\in S_1^2$ of distinct vertices such that $g(x)\in N(z)$, $g(z)\in N(w)$ and $g(w)\in N(y)$. Since $\delta>\eps^4$, the proof of the claim is complete. $\Box$

This is close to what we need, but $g$ is not necessarily injective. Let $S$ be a maximal subset of $S_1$ on which $g$ is injective and set $f(x)=g(x)$ for every $x\in S$. Note that $|S_1\setminus S|$ is at most the number of pairs $(x,y)\in S_1^2$ with $x\neq y$ and $g(x)=g(y)$. For any $x\in S_1$, we have $|N(x)\cap B'|\geq \frac{\eps^3}{2}|B|$, so for any $x\neq y$, the probability that $g(x)=g(y)$ is at most $\frac{2}{\eps^3 |B|}$. Hence, the expected number of pairs $(x,y)\in S_1^2$ with $x\neq y$ and $g(x)=g(y)$ is at most $|S_1|^2\cdot \frac{2}{\eps^3 |B|}\leq \frac{2}{\eps^3}|S_1|\frac{|A|}{|B|}\leq \frac{1}{64}\eps^{10} |S_1|$ by our assumption that $|A|/|B|\leq \frac{1}{128}\eps^{13}$.
Hence, with probability at least $1/2$, $|S_1\setminus S|$ is less than $\frac{1}{32}\eps^{10} |S_1|$. If this holds and there are at least $\frac{1}{16}\eps^{10} |S_1|$ pairwise disjoint pairs $(z,w)\in S_1^2$ of distinct vertices such that $f(x)\in N(z)$, $f(z)\in N(w)$ and $f(w)\in N(y)$, then there are at least $\frac{1}{32}\eps^{10} |S_1|$ pairwise disjoint pairs $(z,w)\in S^2$ of distinct vertices such that $f(x)\in N(z)$, $f(z)\in N(w)$ and $f(w)\in N(y)$.

Using the claim and the last paragraph, we find that with probability at least $1/6$, for any $x,y\in S$, there are at least $\frac{1}{32}\eps^{10} |S_1|\geq \frac{1}{32}\eps^{10} |S|$ pairwise disjoint pairs $(z,w)\in S^2$ of distinct vertices such that $f(x)\in N(z)$, $f(z)\in N(w)$ and $f(w)\in N(y)$. Since $|S|\geq |S_1|/2\geq |S_0|/4\geq \eps^4|A|/8$, we may indeed find a suitable $c=\Omega(\eps^{14})$.
\end{proof}

The next lemma allows us to obtain not just one suitable subset, but an almost-cover by such sets.

\begin{lemma} \label{lem:find many good sets}
    For any $0<\eps< 1/100$ and $0<\theta<1$, there exists a positive constant $\eta=\eta(\eps,\theta)=\Omega(\theta \eps^{27})$ such that the following is true. Let $H=(A,B)$ be a bipartite graph with $|A|\leq \frac{\eps}{2}|B|$. Suppose that for every $x\in A$, $|N(x)|\geq \eps|B|$. Then there exist pairwise disjoint sets $S_1,S_2,\dots,S_t\subset A$ and an injection $f:A\rightarrow B$ such that
    \begin{enumerate}
    \item For every $i\in [t]$, $|S_i|\geq \eta|A|$.
    \item For every $x\in A$, $f(x)\in N(x)$.
    \item For any $i\in [t]$ and any distinct $x,y\in S_i$, there exist at least $\eta|S_i|$ pairwise disjoint pairs $(z,w)\in S_i^2$ of distinct vertices such that $f(x)\in N(z)$, $f(z)\in N(w)$ and $f(w)\in N(y)$. \label{cond:switching in S_i}
    \item $|A\setminus \bigcup_{i\in [t]} S_i|\leq \theta|A|$.
\end{enumerate}
\end{lemma}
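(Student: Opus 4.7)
The plan is to iterate Lemma~\ref{lem:find one good set}, peeling off one good subset at a time from $A$ (and the corresponding matched vertices from $B$) until the leftover of $A$ has size at most $\theta|A|$, and then extending the partial matching greedily to all of $A$.

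More precisely, set $A_0=A$, $B_0=B$, and at step $i\ge 1$ proceed as follows. If $|A_{i-1}|\le \theta|A|$, stop and let $t=i-1$. Otherwise, apply Lemma~\ref{lem:find one good set} with parameter $\eps/2$ to the bipartite graph $(A_{i-1},B_{i-1})$ (inherited from $H$) to obtain a set $S_i\subset A_{i-1}$ with $|S_i|\ge c|A_{i-1}|$ (where $c=c(\eps/2)=\Omega(\eps^{27})$) and a matching $f_i\colon S_i\to B_{i-1}$ with the switching property of Lemma~\ref{lem:find one good set}. Set $A_i=A_{i-1}\setminus S_i$ and $B_i=B_{i-1}\setminus f_i(S_i)$. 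To see that the hypotheses of Lemma~\ref{lem:find one good set} are met at each step, note that $|A_{i-1}|\le |A|\le \frac{\eps}{2}|B|\le \frac12|B|\le |B_{i-1}|$, and for every $x\in A_{i-1}$,
$$|N_H(x)\cap B_{i-1}|\ge \eps|B|-(|A|-|A_{i-1}|)\ge \eps|B|-|A|\ge \tfrac{\eps}{2}|B|\ge \tfrac{\eps}{2}|B_{i-1}|.$$
Since $|A_i|\le (1-c)|A_{i-1}|$, the process terminates after $t\le c^{-1}\log(1/\theta)$ steps with $|A_t|\le \theta|A|$.

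Define $f$ on $\bigcup_i S_i$ as $f|_{S_i}=f_i$; this is injective because the images lie in the disjoint sets $B_{i-1}\setminus B_i$. To extend $f$ to the leftover $A_t$, match each $x\in A_t$ greedily into $B_t$: by the same computation as above, $|N_H(x)\cap B_t|\ge \frac{\eps}{2}|B|$, while at any step of the greedy procedure at most $|A_t|-1<|A|$ vertices of $B_t$ have already been used, and $\frac{\eps}{2}|B|-(|A|-1)\ge 1$, so an unused neighbour exists. Taking $\eta=c\theta=\Omega(\theta\eps^{27})$, conditions (1) and (4) are immediate from the stopping rule, condition (2) holds by construction, and condition (3) follows from the switching property provided by Lemma~\ref{lem:find one good set} at step $i$ (which guarantees $c|S_i|\ge \eta|S_i|$ disjoint switching pairs, noting that neighbourhoods in $(A_{i-1},B_{i-1})$ coincide with $H$-neighbourhoods of vertices lying in this subgraph).

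The argument is essentially a clean iteration, so the only real thing to check is that the hypotheses of Lemma~\ref{lem:find one good set} survive the removal of matched vertices from both sides; the inequality $|A|\le \frac{\eps}{2}|B|$ from the statement is exactly what ensures this, both for keeping the minimum degree bounded below by a constant fraction of $|B_i|$ and for making the final greedy extension succeed. Since all ingredients reduce to a single application of Lemma~\ref{lem:find one good set} per iteration, the proof will be short.
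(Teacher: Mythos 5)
Your proof is correct and follows essentially the same approach as the paper's: iteratively apply Lemma~\ref{lem:find one good set} with parameter $\eps/2$ to the shrinking bipartite graph, using $|A|\le\frac{\eps}{2}|B|$ to keep the relative degree at least $\eps/2$ after removing matched vertices, take $\eta=c(\eps/2)\theta$, and extend the matching greedily to the leftover set. The paper's proof is the same, differing only in minor presentational details.
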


\begin{proof}
Let $c=c(\eps/2)$ be the constant provided by Lemma \ref{lem:find one good set} and take $\eta=\theta c$. Note that $\eta\leq c$.

We define the sets $S_1,S_2,\dots$ recursively. Suppose that we have already found pairwise disjoint sets $S_1,S_2,\dots,S_k$ and matchings $f_i:S_i\rightarrow B$ in $H$ such that $f_1(S_1),\dots,f_k(S_k)$ are pairwise disjoint, $|S_i|\geq \eta|A|$ for every $i\leq k$, and for every $i\leq k$ and distinct $x,y\in S_i$, there exist at least $\eta|S_i|$ pairwise disjoint pairs $(z,w)\in S_i^2$ of distinct vertices such that $f_i(x)\in N(z)$, $f_i(z)\in N(w)$ and $f_i(w)\in N(y)$. Set $A_k=A\setminus \bigcup_{i\leq k} S_i$ and $B_k=B\setminus \bigcup_{i\leq k} f_i(S_i)$. If $|A_k|\leq \theta|A|$, terminate the process. Else, consider the bipartite graph $H_k=H[A_k,B_k]$. Observe that $|B\setminus B_k|\leq |A|\leq \frac{\eps}{2}|B|$. Hence, every $x\in A_k$ has $|N_{H_k}(x)|\geq \frac{\eps}{2}|B|\geq \frac{\eps}{2}|B_k|$. Moreover, $|A|\leq |B|$ easily implies that $|A_k|\leq |B_k|$. Therefore, we can apply Lemma \ref{lem:find one good set} (with $H_k$ in place of $H$ and $\eps/2$ in place of $\eps$) to find $S_{k+1}\subset A_k$ of size at least $c|A_k|$ and a matching $f_{k+1}:S_{k+1}\rightarrow B_k$ in $H_k$ such that for any distinct $x,y\in S_{k+1}$, there are at least $c|S_{k+1}|$ pairwise disjoint pairs $(z,w)\in S_{k+1}^2$ of distinct vertices such that $f_{k+1}(x)\in N_{H_k}(z)$, $f_{k+1}(z)\in N_{H_k}(w)$ and $f_{k+1}(w)\in N_{H_k}(y)$. Note that $|S_{k+1}|\geq c|A_k|\geq c\theta|A|=\eta|A|$.

The process must eventually terminate and then we have pairwise disjoint sets $S_1,S_2,\dots,S_t\subset A$ of size at least $\eta|A|$ each such that $|A\setminus \bigcup_i S_i|\leq \theta|A|$. Define $f:A\rightarrow B$ as follows. For $x\in S_i$, set $f(x)=f_i(x)$. For every $x\in A\setminus \bigcup_i S_i$, define $f(x)$ arbitrarily in a way that $f(x)\in N_H(x)$ and $f$ remains injective. This is possible since for every $x\in A$, $|N_H(x)|\geq \eps |B|\geq |A|$. It is easy to verify that these choices satisfy the conditions of the lemma.
\end{proof}

\subsection{Finding good subgraphs} \label{sec:good subgraph}

Call a hypergraph $\mathcal{G}$ \emph{down-closed} if for every $e\in E(\mathcal{G})$ and $e'\subset e$, we have $e'\in E(\mathcal{G})$. The following lemma is a straightforward modification of Lemma 2.2 from \cite{FS09}, but we provide a proof for completeness. By an embedding of a multihypergraph $\mathcal{H}$ into a simple hypergraph $\mathcal{G}$, we just mean an embedding of $\mathcal{H}'$ into $\mathcal{G}$, where $\mathcal{H}'$ is the simple hypergraph obtained by replacing the multiedges of $\mathcal{H}$ with simple edges.

\begin{lemma} \label{lem:embed hypergraph}
    Let $\mathcal{H}$ be an $m$-vertex multihypergraph with maximum degree at most $\Delta$ in which every hyperedge has size at most $\Delta$. Let $0<\lambda<1/(2\Delta)$ and let $\mathcal{G}$ be a down-closed hypergraph with $n\geq 2m$ vertices and more than $(1-\lambda^{\Delta})\binom{n}{\Delta}$ hyperedges of size $\Delta$. Then there are at least $(1-2\Delta \lambda)^m n(n-1)\cdots (n-m+1)$ labelled embeddings of $\mathcal{H}$ into $\mathcal{G}$.
\end{lemma}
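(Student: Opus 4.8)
\textbf{Proof proposal for Lemma~\ref{lem:embed hypergraph}.}
The plan is to embed the vertices of $\mathcal{H}$ one at a time in a fixed order $v_1,v_2,\dots,v_m$, and at each step count the number of ``bad'' choices for the image of $v_i$ that would prevent us from completing the embedding of the hyperedges already spanned. Concretely, after choosing images $u_1,\dots,u_{i-1}$ of $v_1,\dots,v_{i-1}$, we pick $u_i$ from the remaining $n-i+1$ vertices, and we call $u_i$ \emph{bad} if there is a hyperedge $e$ of $\mathcal{H}$ with $v_i\in e\subseteq\{v_1,\dots,v_i\}$ whose image $\{u_j: v_j\in e\}$ is not a hyperedge of $\mathcal{G}$. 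Since $\mathcal{G}$ is down-closed, once every maximum-size superset is realized as an edge, all subsets are too; but more to the point, I will bound bad choices by bounding, for each such $e$, the number of $u_i$ for which the $\Delta$-set extending $\{u_j:v_j\in e\}$ (padded arbitrarily with already-used or fresh vertices to size $\Delta$) is a non-edge. The key point is that $v_i$ lies in at most $\Delta$ hyperedges, so it suffices to show that for each hyperedge through $v_i$, at most $2\lambda n$ (roughly) choices of $u_i$ are bad; a union bound over the at most $\Delta$ hyperedges then gives at most $2\Delta\lambda n$ bad choices, leaving at least $(n-i+1)-2\Delta\lambda n\geq (1-2\Delta\lambda)(n-i+1)$ good choices (here I use $n\geq 2m$ to absorb the ``$-i+1$'' and keep the bound clean). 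Multiplying these lower bounds over $i=1,\dots,m$ yields the claimed $(1-2\Delta\lambda)^m n(n-1)\cdots(n-m+1)$.

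To make the per-hyperedge bound work, I would argue as follows. Fix a hyperedge $e$ with $v_i\in e\subseteq\{v_1,\dots,v_i\}$ and let $W=\{u_j: v_j\in e, j<i\}$ be the (already fixed) partial image, of size $|e|-1\leq \Delta-1$. Extend $W$ to a set $W'$ of size $\Delta-1$ by adding arbitrary fresh vertices not yet used (possible since $n\geq 2m$ leaves room). The number of $(\Delta-1)$-sets that are not contained in any $\Delta$-edge of $\mathcal{G}$ is small: since $\mathcal{G}$ has more than $(1-\lambda^\Delta)\binom n\Delta$ edges of size $\Delta$, the number of non-edges of size $\Delta$ is less than $\lambda^\Delta\binom n\Delta$, and a $(\Delta-1)$-set $W'$ together with more than $\lambda(n-\Delta+1)$ vertices forming non-edges would already account for more than $\lambda^\Delta\binom n\Delta$ non-$\Delta$-edges if too many $(\Delta-1)$-sets were ``bad''. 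Thus all but at most $\lambda^{\Delta-1}\binom n{\Delta-1}$ of the $(\Delta-1)$-sets are ``good'', meaning extendable by more than $(1-\lambda)n$ vertices to an edge; and for a good $W'$, the choices of $u_i$ for which $W\cup\{u_i\}$ fails to be an edge of $\mathcal{G}$ number at most $\lambda n$ (up to lower-order terms, since $\mathcal{G}$ is down-closed and $W\cup\{u_i\}\subseteq W'\cup\{u_i\}$). I will need to handle the bookkeeping that $W'$ might fail to be good, but I can instead choose the padding vertices for $W'$ adaptively/greedily, or simply note that the fraction of bad $(\Delta-1)$-sets is at most $\lambda$ and fold this into the $2\Delta\lambda$ slack (this is exactly why the bound has a factor $2$ rather than $1$).

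The main obstacle, and the place requiring care, is the double role of $\lambda$: one must simultaneously control (i) the scarcity of bad $(\Delta-1)$-sets (a ``typical $(\Delta-1)$-set extends to an edge'') and (ii) for a typical $(\Delta-1)$-set, the scarcity of bad extensions to a $\Delta$-set. These are two applications of the same averaging argument, one level apart, and the clean statement ``more than $(1-\lambda^\Delta)\binom n\Delta$ edges'' is engineered precisely so that a single bad level costs a factor $\lambda$ and the nesting gives $\lambda^\Delta$. I would set this up as a short sub-claim: \emph{in a down-closed hypergraph with more than $(1-\lambda^\Delta)\binom n\Delta$ $\Delta$-edges, for every $0\le s\le \Delta$, all but at most $\lambda^{\Delta-s}\binom ns$ of the $s$-sets lie in more than $(1-\lambda)\binom{n-s}{\Delta-s}$ $\Delta$-edges}, proved by downward induction on $s$. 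Granting this sub-claim with $s=\Delta-1$ and $s=\Delta-2$ (or iterating it once), the per-step count of good images of $v_i$ is at least $(n-i+1)-\Delta\cdot(\lambda+\lambda)n\ge(1-2\Delta\lambda)(n-i+1)$, and the lemma follows by taking the product over all $m$ steps. I expect the argument to be short once the sub-claim is isolated; the rest is the standard greedy embedding with a union bound over the $\le\Delta$ hyperedges incident to the current vertex.
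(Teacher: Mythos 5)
Your overall framework (greedy embedding, per-step counting of at most $2\Delta\lambda n$ bad choices, product over $m$ steps) is exactly the paper's, but there is a real gap in the invariant you maintain. Your definition of a ``bad'' choice for $u_i$ only looks at hyperedges $e\ni v_i$ with $e\subseteq\{v_1,\dots,v_i\}$, i.e.\ hyperedges that are \emph{completed} at step $i$. You never impose any condition on the partial images $\{u_j: v_j\in e,\ j\le i\}$ of hyperedges that are not yet finished. This is what breaks the argument: when a hyperedge $e$ is finally completed at some later step with a partial image $W$ already fixed, you need $W$ to lie in many $\Delta$-edges of $\mathcal{G}$ in order for ``most $u_i$ make $W\cup\{u_i\}$ an edge'' to hold. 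But nothing you have maintained rules out that $W$ was chosen to be a set contained in \emph{no} $\Delta$-edges at all (e.g., a single vertex avoided by every $\Delta$-edge, which is entirely compatible with the hypothesis that fewer than $\lambda^\Delta\binom{n}{\Delta}$ of the $\Delta$-sets are non-edges). Your proposed remedies do not repair this: padding $W$ to $W'$ greedily cannot make $W'$ ``good'' if $W$ itself is contained in too few $\Delta$-edges, and ``folding the fraction of bad $(\Delta-1)$-sets into the slack'' is an averaging argument that is not available once $W$ is a fixed set determined by earlier (possibly adversarial) choices.

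The paper fixes this precisely by strengthening the inductive invariant: it defines a set $S$ of size $s\le\Delta$ to be \emph{rich} if it lies in more than $(1-\lambda^{\Delta-s})\binom{n-s}{\Delta-s}$ $\Delta$-edges, and then shows by induction on $k$ that one can build the embedding so that $f(e\cap V_k)$ is rich for \emph{every} $e\in E(\mathcal{H})$ and \emph{every} $k$ — not just for completed hyperedges. The single counting step needed is that if $S$ is rich of size $<\Delta$, then at most $\lambda n$ vertices $v$ make $S\cup\{v\}$ not rich; applying this to the at most $\Delta$ hyperedges through $v_{k+1}$ gives the $\Delta\lambda n$ bad choices per step, and the $n\ge 2m$ hypothesis turns $n-k-\Delta\lambda n$ into $(1-2\Delta\lambda)(n-k)$. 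Your sub-claim is in the right spirit (it is a statement about most $s$-sets being good), but it is a \emph{global} count, not a statement conditional on the richness of the previously built partial image, so it does not slot into the induction. To repair your proof, replace ``$u_i$ is bad if some completed hyperedge has a non-edge image'' with ``$u_i$ is bad if for some hyperedge $e\ni v_i$, the set $\{u_j: v_j\in e,\ j\le i\}$ is not rich,'' and prove the per-step bound via the one-level richness lemma above; this is exactly the paper's argument.
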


\begin{proof}
    Let us call a set $S\subset V(\GG)$ of size at most $\Delta$ \emph{rich} if it is contained in more than $(1-\lambda^{\Delta-|S|})\binom{n-|S|}{\Delta-|S|}$ hyperedges of $\GG$ of size $\Delta$. Moreover, let us call a vertex $v\in V(\GG)\setminus S$ \emph{rich with respect to $S$} if $S\cup \{v\}$ is rich.
    
    We claim that for any rich set $S$ of size less than $\Delta$, there are at most $\lambda n$ vertices in $V(\GG)\setminus S$ which are not rich with respect to $S$. Indeed, if more than $\lambda n$ such vertices exist, then $S$ is contained in at least $\lambda n\cdot  \lambda^{\Delta-(|S|+1)}\binom{n-(|S|+1)}{\Delta-(|S|+1)}/(\Delta-|S|)\geq \lambda^{\Delta-|S|}\binom{n-|S|}{\Delta-|S|}$ sets of size $\Delta$ which are non-edges in $\GG$, which contradicts the assumption that $S$ is rich.
    
    Let $V(\HH)=\{v_1,\dots,v_m\}$ and write $V_k=\{v_1,\dots,v_k\}$ for every $0\leq k\leq m$. We will prove by induction on $k$ that for any $0\leq k\leq m$, there are at least $\prod_{0\leq i\leq k-1} (n-i-\Delta \lambda n)$ injective maps $f:V_k\rightarrow V(\GG)$ such that for every $e\in E(\HH)$, $f(e\cap V_k)$ is a rich set. For $k=0$, this follows from the fact that the empty set is rich. Now let $0\leq k\leq m-1$ and assume that $f:V_k\rightarrow V(\GG)$ is an injective map such that for $e\in E(\HH)$, $f(e\cap V_k)$ is a rich set. It suffices to prove that there are at least $n-k-\Delta \lambda n$ ways to extend $f$ to an injective map $f':V_{k+1}\rightarrow V(\GG)$ satisfying that for every $e\in E(\HH)$, $f'(e\cap V_{k+1})$ is a rich set. Note that $\HH$ has at most $\Delta$ edges containing $v_{k+1}$; let us call them $e_1,\dots,e_{\ell}$. Now if we choose $f'(v_{k+1})$ to be a vertex that does not belong to $f(V_k)$ and which is rich with respect to the set $f(e_i\cap V_k)$ for every $1\leq i\leq \ell$, then we get a desired extension of $f$. Thus, the number of choices for $f'(v_{k+1})$ which do not give a desired extension is at most $k+\ell\lambda n\leq k+\Delta \lambda n$. This completes the induction step.
    
    Since $m\leq n/2$, we have $n-i-\Delta \lambda n\geq (1-2\Delta \lambda)(n-i)$ for every $0\leq i\leq m-1$. Hence, there are at least $\prod_{0\leq i\leq m-1} (1-2\Delta \lambda)(n-i)=(1-2 \Delta \lambda)^{m} n(n-1)\dots (n-m+1)$ injective maps $f:V(\HH)\rightarrow V(\GG)$ such that $f(e)$ is rich for every $e\in E(\HH)$. For any such $f$ and any $e\in E(\HH)$, $f(e)$ is contained in at least one hyperedge of $\GG$, so $f(e)\in E(\GG)$, since $\GG$ is down-closed. Hence, any such $f$ gives a labelled embedding of $\HH$ into $\GG$.
\end{proof}

\begin{lemma} \label{lem:random embedding}
    Let $\mathcal{H}$ be an $m$-vertex multihypergraph with minimum degree at least one and maximum degree at most $\Delta$ in which every hyperedge has size at most $\Delta$. Let $U$ be a set of size $n\geq 16rm$ and let $R\subset U$ have size at least $\frac{n}{8r}$. Let $f$ be a uniformly random injective map from $V(\mathcal{H})$ to $U$. Then the probability that there are fewer than $\frac{e(\mathcal{H})}{\Delta^2 (32r)^{\Delta}}$ edges of $\mathcal{H}$ whose image under $f$ lie entirely in $R$ is at most $\exp(-\Omega(\frac{m}{\Delta^3 (16r)^{\Delta}}))$.
\end{lemma}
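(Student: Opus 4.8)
The plan is to pass to a large matching in $\mathcal{H}$, expose the random injection $f$ one matched edge at a time, bound from below the number of matched edges landing inside $R$ by a binomial random variable, and apply the Chernoff bound (Lemma~\ref{lem:chernoff}). Write $X$ for the number of edges of $\mathcal{H}$ whose $f$-image lies in $R$, and set $T=\tfrac{e(\mathcal{H})}{\Delta^2(32r)^{\Delta}}$. I will use three elementary facts: $e(\mathcal{H})\ge m/\Delta$ (minimum degree at least one, edges of size $\le\Delta$); $e(\mathcal{H})\le\Delta m$ (maximum degree $\le\Delta$, nonempty edges); and the greedy algorithm that repeatedly picks an edge and deletes all (at most $\Delta^2$) edges meeting it yields a matching $e_1,\dots,e_N$ of pairwise disjoint edges with $N\ge e(\mathcal{H})/\Delta^2\ge m/\Delta^3$. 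I also note a crude single-edge estimate: for any edge $e$, since $|e|\le m\le n/(16r)$ and $|R|\ge n/(8r)$ give $|R|-|e|+1\ge n/(16r)$, we have $\mathbb{P}(f(e)\subset R)=\prod_{j=0}^{|e|-1}\frac{|R|-j}{n-j}\ge\big(\tfrac{|R|-|e|+1}{n}\big)^{|e|}\ge (16r)^{-\Delta}$. We may assume $\Delta\ge 2$ (when $\Delta=1$, $X=|f(V(\mathcal{H}))\cap R|$ is hypergeometric with mean $\ge m/(8r)\ge 2T$, so a routine tail bound applies) and $m\ge 2\Delta^2$ (when $m<2\Delta^2$ we have $e(\mathcal{H})\le\Delta m<2\Delta^3$, hence $T<\tfrac{2\Delta}{(32r)^\Delta}<1$, so ``$X<T$'' means no edge lies in $R$, and by the single-edge estimate $\mathbb{P}(X=0)\le 1-(16r)^{-\Delta}\le\exp(-(16r)^{-\Delta})\le\exp(-\tfrac{m}{\Delta^3(16r)^\Delta})$ since $m<2\Delta^2\le\Delta^3$).

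So assume $\Delta\ge 2$ and $m\ge 2\Delta^2$. Expose $f$ on $e_1,e_2,\dots$ in this order. When $e_i$ is exposed, the set $D'\subset U$ of previously exposed vertices has $|D'|=\sum_{j<i}|e_j|\le m$, so $|R\setminus D'|\ge |R|-m\ge n/(8r)-n/(16r)=n/(16r)\ge m\ge 2\Delta^2$ and $|U\setminus D'|\le n$; and, conditioned on the exposure, $f|_{e_i}$ is a uniform injection into $U\setminus D'$ (the $e_i$ being pairwise disjoint), so the conditional probability that $f(e_i)\subset R$ is
\[
\prod_{j=0}^{|e_i|-1}\frac{|R\setminus D'|-j}{|U\setminus D'|-j}\ \ge\ \Big(\frac{|R\setminus D'|-\Delta}{n}\Big)^{|e_i|}\ \ge\ \Big(\frac{|R\setminus D'|}{n}\Big)^{\Delta}\Big(1-\frac{\Delta}{|R\setminus D'|}\Big)^{\Delta}\ \ge\ \tfrac12(16r)^{-\Delta}=:p,
\]
using $|R\setminus D'|/n\ge 1/(16r)$ and $(1-\Delta/|R\setminus D'|)^{\Delta}\ge 1-\Delta^2/|R\setminus D'|\ge\tfrac12$. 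Since all these conditional probabilities are at least $p$, a standard coupling shows that $X_M:=\#\{i:f(e_i)\subset R\}$ stochastically dominates $\mathrm{Bin}(N,p)$, and $X\ge X_M$. With $\mu:=Np\ge\tfrac{e(\mathcal{H})}{2\Delta^2}(16r)^{-\Delta}$, the identity $T=\tfrac{e(\mathcal{H})}{\Delta^2}2^{-\Delta}(16r)^{-\Delta}$ together with $\Delta\ge 2$ gives $\mu\ge 2T$, whence by Lemma~\ref{lem:chernoff}
\[
\mathbb{P}(X<T)\ \le\ \mathbb{P}\big(\mathrm{Bin}(N,p)\le\mu/2\big)\ \le\ \exp(-\mu/8)\ \le\ \exp\!\Big(-\tfrac{m}{16\Delta^3(16r)^{\Delta}}\Big),
\]
the last step using $\mu\ge\tfrac{e(\mathcal{H})}{2\Delta^2}(16r)^{-\Delta}\ge\tfrac{m}{2\Delta^3(16r)^{\Delta}}$. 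This is the desired bound.

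The delicate point is the final inequality above: the target exponent carries $(16r)^{-\Delta}$ while $T$ carries $(32r)^{-\Delta}$, so there is only a multiplicative gap of $2^{\Delta}$, and any constant-factor loss \emph{per matched edge} in the conditional probabilities would accumulate to a fatal $\exp(\Theta(\Delta))$ factor. What rescues the argument is that after exposing at most $m$ vertices the set $R$ still contains at least $n/(16r)\ge 2\Delta^2$ unexposed vertices, so by Bernoulli's inequality the ``$-\Delta$'' corrections cost only a single global factor $\ge\tfrac12$ per edge, keeping each conditional probability at $\tfrac12(16r)^{-\Delta}$ rather than letting it degrade to $(32r)^{-\Delta}$. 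The regime $m<2\Delta^2$, where $R$ is not large enough for this, has to be isolated, but there $T<1$ and one edge already suffices.
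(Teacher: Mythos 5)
Your proof is correct and follows the same strategy as the paper: extract a matching of size at least $e(\mathcal{H})/\Delta^2$, expose the random injection one matched edge at a time, stochastically dominate the count of matched edges landing in $R$ by a binomial, and apply the Chernoff bound. The only difference is that the paper bounds the conditional probability per vertex directly — since after at most $m$ exposures at least $|R|-m\geq n/(8r)-n/(16r)=n/(16r)$ vertices of $R$ remain out of at most $n$ available slots, each vertex lands in $R$ with conditional probability at least $1/(16r)$, giving $p\geq(16r)^{-\Delta}$ with no loss — which avoids your factor-$\tfrac12$ degradation and hence the separate $\Delta=1$ and $m<2\Delta^2$ cases.
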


\begin{proof}
    Since every edge of $\mathcal{H}$ has size at most $\Delta$ and the maximum degree of $\mathcal{H}$ is at most $\Delta$, it follows that $\mathcal{H}$ has at least $e(\mathcal{H})/\Delta^2$ pairwise disjoint edges. Choose such edges $e_1,e_2,\dots,e_k$ with $k\geq e(\mathcal{H})/\Delta^2$. We can construct $f$ by taking an arbitrary list of the vertices of $\mathcal{H}$ and mapping them one by one to $U$, in each step randomly choosing one of the still available vertices. Take an ordering in which the vertices of $e_1$ come first, followed by the vertices of $e_2$ and so on. After the vertices of $e_k$, the remaining vertices are mapped in an arbitrary order. Note that $|V(\mathcal{H})|=m\leq \frac{n}{16r}\leq |R|/2$, so in each step at least $|R|/2\geq \frac{n}{16r}$ vertices of $R$ are still available. Hence, for every $i$, conditional on any mapping of the vertices of $e_1,\dots,e_{i-1}$, the probability that every vertex in $e_i$ gets mapped to $R$ is at least $(\frac{1}{16r})^{\Delta}$. Thus, the probability that fewer than $\frac{k}{(32r)^{\Delta}}$ of the edges $e_1,e_2,\dots,e_k$ get mapped to $R$ is upper bounded by the probability that the binomial random variable $\textrm{Bin}(k,(\frac{1}{16r})^{\Delta})$ takes value less than $k/(32r)^{\Delta}$. Again by a standard application of the Chernoff bound (Lemma \ref{lem:chernoff}), this probability is $\exp(-\Omega(k/(16r)^{\Delta}))$. Since $k\geq e(\mathcal{H})/\Delta^2$ and $e(\mathcal{H})\geq m/\Delta$ (the latter holds because of the minimum degree condition), the lemma follows.
\end{proof}

\begin{lemma} \label{lem:embed hypergraph carefully}
    There exists a constant $c=c(r)>0$ with the following property. Let $\mathcal{H}$ be an $m$-vertex multihypergraph with minimum degree at least one and maximum degree at most $\Delta$ in which every hyperedge has size at most $\Delta$. Let $\mathcal{G}$ be a down-closed hypergraph with $n\geq 16rm$ vertices and more than $\left(1-c^{\Delta^2}\right)\binom{n}{\Delta}$ hyperedges of size $\Delta$. Let $t\leq \exp(c^{\Delta}m)$ and for every $1\leq i\leq t$, let $R_i\subset V(\mathcal{G})$ have size at least $\frac{n}{8r}$.
    
    Then there exists an embedding of $\mathcal{H}$ into $\mathcal{G}$ such that for each $1\leq i\leq t$, the number of edges of $\mathcal{H}$ which are entirely in $R_i$ is at least $\frac{e(\mathcal{H})}{\Delta^2 (32r)^{\Delta}}$.
\end{lemma}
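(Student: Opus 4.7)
The plan is to prove this via a random injection argument that combines Lemma \ref{lem:embed hypergraph} with Lemma \ref{lem:random embedding}. The strategy is to sample a uniformly random injective map $f: V(\mathcal{H}) \to V(\mathcal{G})$, show that with positive probability $f$ is simultaneously a valid embedding of $\mathcal{H}$ into $\mathcal{G}$ and satisfies the covering requirement for every $R_i$, and then extract such an $f$.

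First, I would apply Lemma \ref{lem:embed hypergraph} with parameter $\lambda = c^{\Delta}$. Choosing $c = c(r)$ sufficiently small ensures $\lambda < 1/(2\Delta)$, and the hypothesis $e_\Delta(\mathcal{G}) > (1 - c^{\Delta^2})\binom{n}{\Delta} = (1 - \lambda^{\Delta})\binom{n}{\Delta}$ together with $n \geq 16rm \geq 2m$ yields at least $(1 - 2\Delta c^{\Delta})^m (n)_m$ labelled embeddings of $\mathcal{H}$ into $\mathcal{G}$, where $(n)_m = n(n-1)\cdots(n-m+1)$ is the total number of injections. Therefore a uniformly random injection $f$ is a valid embedding with probability at least $(1 - 2\Delta c^{\Delta})^m \geq \exp(-5 m \Delta c^{\Delta})$ (using $\log(1-x) \geq -2x$ for small $x$).

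Second, for each fixed $i \in [t]$, Lemma \ref{lem:random embedding} applied to the set $R_i$ (whose size is at least $n/(8r)$) bounds the probability that $f$ maps fewer than $e(\mathcal{H})/(\Delta^2(32r)^{\Delta})$ edges of $\mathcal{H}$ entirely into $R_i$ by $\exp\bigl(-\Omega(m/(\Delta^3(16r)^{\Delta}))\bigr)$. Taking a union bound over the $t \leq \exp(c^{\Delta} m)$ sets, the probability that $f$ fails the covering condition for some $R_i$ is at most $\exp\bigl(c^{\Delta} m - \Omega(m/(\Delta^3(16r)^{\Delta}))\bigr)$. Combining this with the previous paragraph, it suffices to choose $c = c(r) > 0$ so that
\[
    (5\Delta + 1) c^{\Delta} < \Omega\!\left(\frac{1}{\Delta^3 (16r)^{\Delta}}\right)
    \quad\text{for all }\Delta \geq 1,
\]
equivalently $(5\Delta + 1)\Delta^3 (16rc)^{\Delta}$ is bounded by a fixed constant uniformly in $\Delta$. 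Picking $c = c(r)$ small enough that $16rc$ is, say, less than $e^{-1}$ makes the exponential factor dominate the polynomial factor, and the inequality holds for all $\Delta \geq 1$.

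The proof is then concluded by noting that under these choices, the probability that a uniformly random injection is a valid embedding strictly exceeds the probability it fails the covering condition for some $R_i$, so a desired embedding exists. I expect the main obstacle to be purely bookkeeping: verifying that a single constant $c = c(r)$ can be chosen so that the two exponential bounds fit together uniformly in $\Delta$, and checking that the side conditions (such as $\lambda < 1/(2\Delta)$ needed for Lemma \ref{lem:embed hypergraph} and the fact that $\mathcal{H}$ has minimum degree at least one so that Lemma \ref{lem:random embedding} applies) are automatically satisfied by the hypotheses of the lemma.
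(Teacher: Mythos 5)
Your proposal is correct and matches the paper's proof essentially step for step: apply Lemma \ref{lem:embed hypergraph} with $\lambda = c^{\Delta}$ to lower-bound the probability that a uniformly random injection is a valid embedding, apply Lemma \ref{lem:random embedding} plus a union bound over the $t$ sets $R_i$ to upper-bound the failure probability, and choose $c=c(r)$ small enough that the former exceeds the latter uniformly in $\Delta$. The only differences are cosmetic choices of constants (e.g.\ $\exp(-5m\Delta c^{\Delta})$ versus the paper's $\exp(-4\Delta c^{\Delta}m)$).
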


\begin{proof}
    Let $c$ be a sufficiently small positive constant, depending only on $r$. By Lemma \ref{lem:embed hypergraph} with $\lambda=c^{\Delta}$, there are at least $(1-2\Delta c^{\Delta})^mn(n-1)\cdots (n-m+1)$ labelled embeddings of $\mathcal{H}$ into $\mathcal{G}$. This means that a random injective map $f:V(\mathcal{H})\rightarrow V(\mathcal{G})$ defines a valid embedding with probability at least $(1-2\Delta c^{\Delta})^m$. Using $1-2\Delta c^{\Delta}\geq \exp(-4\Delta c^{\Delta})$, we get that $f$ is an embedding with probability at least $\exp(-4\Delta c^{\Delta}m)$. By Lemma \ref{lem:random embedding}, for any fixed $i$, the probability that $f$ maps fewer than $\frac{e(\mathcal{H})}{\Delta^2 (32r)^{\Delta}}$ edges of $\mathcal{H}$ to subsets of $R_i$ is at most $\exp(-\Omega(\frac{m}{\Delta^3 (16r)^{\Delta}}))$. Taking union bound over all $1\leq i\leq t$, we find that the probability that there exists $1\leq i\leq t$ for which $f$ maps fewer than $\frac{e(\mathcal{H})}{\Delta^2 (32r)^{\Delta}}$ edges of $\mathcal{H}$ to subsets of $R_i$ is at most $\exp(c^{\Delta}m-\Omega(\frac{m}{\Delta^3 (16r)^{\Delta}}))$. If $c$ is small enough, then this is less than $\exp(-4\Delta c^{\Delta}m)$, so there exists a choice for $f$ which defines a valid embedding and which maps at least $\frac{e(\mathcal{H})}{\Delta^2 (32r)^{\Delta}}$ edges of $\mathcal{H}$ to subsets of $R_i$ for each $1\leq i\leq t$.
\end{proof}

Observe that in the proof of Theorem \ref{thm:tiling with bipartite sequence}, we may assume that every $F_k\in \FF$ has at most one isolated vertex (else we may add edges, keep the graph bipartite and keep the maximum degree at most $\Delta$). In what follows, let $\FF$ satisfy this condition. For an edge-coloured graph $G$ and a vertex $w\in V(G)$, we write $N_{\textrm{red}}(w)$ for the set of vertices $u\in V(G)$ such that $wu$ is a red edge.

\begin{lemma} \label{lem:find good subgraph}
    Let $\eps=\Omega_r(1)^{\Delta}$ satisfy $\eps<1/100$, let $\theta=\frac{1}{2\Delta^2(32r)^{\Delta}}$ and let $c=c(r)$ be the constant from Lemma \ref{lem:embed hypergraph carefully}. Let $G$ be an $r$-edge coloured complete graph. Let $U$, $V$ and $W$ be subsets of $V(G)$ such that $U\cap V=\emptyset$ and $|U|\geq 100r^2$. Assume that for every $w\in W$, we have $|N_{\textrm{red}}(w)\cap U|\geq \frac{|U|}{8r}$. Moreover, assume that more than $(1-c^{\Delta^2})\binom{|U|}{\Delta}$ of the $\Delta$-sets in $U$ have at least $\eps|V|$ common red neighbours in $V$. Finally, assume that $|U|\leq \frac{\eps}{2}|V|$ and that $|W|\leq \exp(\frac{c^{\Delta}}{80r^2\Delta}|U|)$.
    
    \begin{enumerate}[label=(\alph*)]
        \item Then there exist $\eta=\Omega_r(1)^{\Delta}$ and a red $(\eta,\theta)$-good subgraph $F=(X,Y)$ in $G[U\cup V]$ with at most $|U|/(16r^2)$ vertices such that for every $w\in W$, the number of $y\in Y$ with $N_F(y)\subset N_{\textrm{red}}(w)$ is at least $2\theta |Y|$. \label{statement:good subgraph}
        
        \item Moreover, if $|W|\leq \exp(\exp(O_r(\Delta)))|U|$, then for any $Z\subset W$, $G[X\cup Y\cup Z]$ has a monochromatic $\FF$-tiling of size at most $O_r(1)^{\Delta}$. \label{statement:absorber}
    \end{enumerate}
\end{lemma}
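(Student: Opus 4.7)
The plan for part (a) is to construct the good subgraph $F$ in two stages. First, find a red copy of $F_k$ in $G[U\cup V]$ using the hypergraph embedding machinery of Lemma~\ref{lem:embed hypergraph carefully}, and then refine its $V$-side via Lemma~\ref{lem:find many good sets} to install the switching property of Definition~\ref{def:absorber}. Choose $k$ to be the largest integer at most $|U|/(16r^2)$ and let $(P,Q)$ be the bipartition of $F_k$. Since we may assume $F_k$ has at most one isolated vertex, a simple edge count gives $k-1\leq |P|(\Delta+1)$ and the same for $|Q|$, hence $\min(|P|,|Q|)\geq (k-1)/(\Delta+1)=\Omega(|U|/(r^2\Delta))$; in particular $|P|\geq |U|/(80r^2\Delta)$.

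For the first stage, let $\HH$ be the multihypergraph on vertex set $P$ with hyperedges $\{N_{F_k}(q):q\in Q\}$, and let $\GG$ be the down-closure on $U$ of the family of $\Delta$-subsets of $U$ with at least $\eps|V|$ common red neighbours in $V$; the hypothesis guarantees $\GG$ has more than $(1-c^{\Delta^2})\binom{|U|}{\Delta}$ hyperedges of size $\Delta$. Apply Lemma~\ref{lem:embed hypergraph carefully} with $R_w:=N_{\textrm{red}}(w)\cap U$ for each $w\in W$; the hypotheses $|R_w|\geq |U|/(8r)$ and $|W|\leq \exp(c^\Delta |U|/(80r^2\Delta))\leq \exp(c^\Delta |P|)$ are satisfied. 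This produces an embedding $f_0:P\to U$ such that for each $w\in W$, at least $|Q|/(\Delta^2(32r)^\Delta)=2\theta|Q|$ of the hyperedges $N_{F_k}(q)$ are mapped entirely into $R_w$. Extend $f_0$ to $Q$ greedily: for each $q\in Q$, the set $f_0(N_{F_k}(q))\in E(\GG)$ has at least $\eps|V|$ common red neighbours in $V$ and $|Q|\leq k\leq \eps|V|/2$, so we can pick distinct images $f_0(q)\in V$. This yields a monochromatic red copy $F'=(X,Y_0)$ of $F_k$ with $X=f_0(P)$ and $Y_0=f_0(Q)$, satisfying that for every $w\in W$, at least $2\theta|Y_0|$ vertices $y_0\in Y_0$ have $N_{F'}(y_0)\subset N_{\textrm{red}}(w)$.

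For the second stage, apply Lemma~\ref{lem:find many good sets} to the auxiliary bipartite graph on sides $(Y_0, V)$ in which $y_0\in Y_0$ is joined to $v\in V$ iff $v$ is a common red neighbour of $N_{F'}(y_0)$. Each $y_0$ has at least $\eps|V|$ neighbours here and $|Y_0|\leq \eps|V|/2$, so taking the parameter in the lemma to be our $\theta$ gives pairwise disjoint $S_1,\dots,S_t\subset Y_0$ and an injection $f_1:Y_0\to V$ with the three-step switching property, where $\eta=\Omega(\theta\eps^{27})=\Omega_r(1)^\Delta$. Let $Y:=f_1(Y_0)$ and $Y_i:=f_1(S_i)$, and let $F$ be obtained from $F'$ by replacing every $y_0$ with $f_1(y_0)$, so that $N_F(f_1(y_0))=N_{F'}(y_0)\subset X$. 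Since each $f_1(y_0)$ is a common red neighbour of $N_{F'}(y_0)$, the subgraph $F$ is monochromatic red; it is isomorphic to $F_k\in\FF$; and the conditions of Definition~\ref{def:absorber} translate directly from Lemma~\ref{lem:find many good sets}. The condition concerning $W$ transfers verbatim from $F'$ to $F$ via $f_1$.

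For part (b), apply Lemma~\ref{lem:good partition absorbs} to the $(\eta,\theta)$-good subgraph produced in part (a). The parameters satisfy $\eta,\theta\geq \exp(-O_r(\Delta))$; using $|Y|=|Q|=\Omega(|U|/(r^2\Delta))$ together with $|Z|\leq |W|\leq \exp(\exp(O_r(\Delta)))|U|$ gives $K:=|Z|/|Y|\leq \exp(\exp(O_r(\Delta)))$; and the absorbing condition on each $z\in Z\subset W$ was secured in part (a). The lemma then produces a monochromatic $\FF$-tiling of $G[X\cup Y\cup Z]$ of size $\exp(O_r(\Delta))=O_r(1)^\Delta$. The main technical hurdle is part (a): the hypergraph embedding must simultaneously place all $\Delta$-neighbourhoods of $F_k$ into rich positions of $\GG$ and drop many such neighbourhoods inside each $R_w$, and the subsequent switching on the $V$-side must install the switching property of Definition~\ref{def:absorber} without disturbing either the embedding in $U$ or the condition concerning $W$.
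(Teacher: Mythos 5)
Your argument follows the paper's route: embed the multihypergraph of $\Delta$-neighbourhoods of $F_k$ into the down-closed hypergraph of rich $\Delta$-sets in $U$ via Lemma~\ref{lem:embed hypergraph carefully}, install the switching property on the $V$-side with Lemma~\ref{lem:find many good sets}, and conclude part~(b) from Lemma~\ref{lem:good partition absorbs}. Your intermediate greedy extension of $f_0$ to $Q$ (forming $F'$ and $Y_0$ before invoking the switching lemma) is a harmless detour; the paper applies Lemma~\ref{lem:find many good sets} directly to the bipartite graph with $A$-side $E(\mathcal{H})$, which is in natural bijection with $Q$, thereby also avoiding the slightly awkward situation of having $A=Y_0\subset V=B$ in that application.

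There is one small but genuine gap. Lemma~\ref{lem:embed hypergraph carefully} (via Lemma~\ref{lem:random embedding}) requires the multihypergraph $\mathcal{H}$ to have \emph{minimum degree at least one}. You take $V(\mathcal{H})=P$ for an arbitrary bipartition $(P,Q)$, but $F_k$ is allowed to have a single isolated vertex; if it lies in $P$, the corresponding vertex of $\mathcal{H}$ has degree zero and the lemma cannot be applied as stated. The fix is exactly what the paper does: choose the bipartition so that every vertex of the $P$-side has degree at least one in $F_k$, placing the potential isolated vertex on the $Q$-side (its hyperedge is then $\emptyset$, which is harmless). With this adjustment your proof goes through and is essentially identical to the paper's.
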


\begin{proof}
    Let $k=\left\lfloor \frac{|U|}{16r^2}\right\rfloor$. Since $F_k$ has at most one isolated vertex, we can choose a bipartition $(X',Y')$ of $F_k$ such that every vertex in $X'$ has degree at least one. Also, since $F_k$ has at most one isolated vertex, $e(F_k)\geq \frac{k-1}{2}$, so since the maximum degree is at most $\Delta$, we have $|X'|,|Y'|\geq \frac{k-1}{2\Delta}$. Define a multihypergraph $\mathcal{H}$ whose vertex set is $X'$ and whose hyperedges are $N_{F_k}(y)$ for every $y\in Y'$ (with repetition). Since $F_k$ has maximum degree at most $\Delta$, it follows that $\mathcal{H}$ has maximum degree at most $\Delta$ and every hyperedge in $\mathcal{H}$ has size at most $\Delta$. Moreover, since every vertex in $X'$ has degree at least one in $F_k$, the minimum degree of $\mathcal{H}$ is at least one. Write $m=|V(\mathcal{H})|=|X'|$.
    
    Define a hypergraph $\mathcal{G}$ whose vertex set is $U$ and whose edges are those subsets of $U$ which have at least $\eps|V|$ common red neighbours in $V$. It is clear that $\mathcal{G}$ is down-closed. Writing $n=|V(\mathcal{G})|$, we have $n=|U|\geq 16r^2k\geq 16rm$. By the assumption on the red common neighbourhood of $\Delta$-sets in $U$, $\mathcal{G}$ has more than $(1-c^{\Delta^2})\binom{n}{\Delta}$ hyperedges of size $\Delta$.
    
    For every $w\in W$, let $R_w=N_{\textrm{red}}(w)\cap U$. By assumption, $|R_w|\geq \frac{n}{8r}$ holds for all $w\in W$. Since $m=|X'|\geq \frac{k-1}{2\Delta}\geq \frac{|U|}{80r^2\Delta}$, we have $|W|\leq \exp(\frac{c^{\Delta}}{80r^2\Delta}|U|)\leq \exp(c^{\Delta}m)$.
    
    By Lemma \ref{lem:embed hypergraph carefully}, there exists an embedding $g$ of $\mathcal{H}$ into $\mathcal{G}$ such that for every $w\in W$, the number of edges of $\mathcal{H}$ whose images are entirely in $R_w$ is at least $\frac{e(\mathcal{H})}{\Delta^2(32r)^{\Delta}}$.
    
    Let us define a bipartite graph $H$ as follows. The parts of $H$ are defined to be $A=E(\mathcal{H})$ and $B=V$. For every $e\in E(\mathcal{H})$, the neighbourhood of $e$ in $H$ is defined to be the common red neighbourhood inside $V$ of the vertices in $g(e)$ (note that $g(e)\subset U$). For any $e\in E(\mathcal{H})=A$, $g(e)\in E(\mathcal{G})$, so $g(e)$ has at least $\eps|V|$ common red neighbours in $V$, hence $|N_H(e)|\geq \eps |V|=\eps |B|$. Moreover, $|A|=|E(\mathcal{H})|=|Y'|\leq k\leq |U|\leq \frac{\eps}{2}|V|=\frac{\eps}{2}|B|$, so we may apply Lemma \ref{lem:find many good sets}. Let pairwise disjoint sets $S_1,\dots,S_t\subset A$ and an injection $f:A\rightarrow B$ satisfy the four properties in Lemma \ref{lem:find many good sets} with some $\eta=\Omega(\theta \eps^{27})$. Clearly, $\eta=\Omega_r(1)^{\Delta}$.
    
    Let $X=g(X')\subset U$ and let $Y=f(A)\subset V$. Define a bipartite graph $F$ with parts $X$ and $Y$ as follows. For every $y\in Y$, let the neighbourhood of $y$ in $F$ be $g(f^{-1}(y))\subset X$. We claim that $F$ is $(\eta,\theta)$-good. It is clear by the definition that $F$ is monochromatic red and is isomorphic to $F_k$. For each $i$, let $Y_i=f(S_i)$. Clearly, $|Y_i|=|S_i|\geq \eta |A|=\eta |Y|$. Furthermore, $|Y\setminus \bigcup_i Y_i|=|A\setminus \bigcup_i S_i|\leq \theta |A|=\theta |Y|$. It remains to check property~(\ref{cond:switching}) from Definition \ref{def:absorber}. Let $y_0,y_3$ be distinct vertices in $Y_i$ for some $i\in [t]$. Let $e_0=f^{-1}(y_0)$ and let $e_3=f^{-1}(y_3)$. By condition (\ref{cond:switching in S_i}) from Lemma \ref{lem:find many good sets}, there exist at least $\eta |S_i|$ pairwise disjoint pairs $(e_1,e_2)\in S_i^2$ of distinct vertices such that for each $0\leq b\leq 2$, $f(e_b)\in N_H(e_{b+1})$. For $j\in \{1,2\}$, let $y_j=f(e_j)$. It suffices to prove that for every $0\leq b\leq 2$, the edges between $y_b$ and the elements of $N_F(y_{b+1})$ are all red. By definition, $N_F(y_{b+1})=g(f^{-1}(y_{b+1}))=g(e_{b+1})$. But $f(e_b)\in N_H(e_{b+1})$ means precisely that every edge between $y_b=f(e_b)$ and the elements of $N_F(y_{b+1})=g(e_{b+1})$ is red, completing the proof that $F$ is $(\eta,\theta)$-good.
    
    Now let $w\in W$. The number of $y\in Y$ with $N_F(y)\subset R_w$ is equal to the number of edges $e\in E(\mathcal{H})$ such that $g(e)\subset R_w$. We know that this number is at least $\frac{e(\mathcal{H})}{\Delta^2(32r)^{\Delta}}=2\theta e(\mathcal{H})=2\theta |Y|$. Since $R_w=N_{\textrm{red}}(w)\cap U$, the proof of \ref{statement:good subgraph} is complete.
    
    For part \ref{statement:absorber}, we shall use Lemma \ref{lem:good partition absorbs}. If suffices to verify that $|W|\leq K|Y|$ holds for some $K\leq \exp(\exp(O_r(\Delta)))$. However, $|Y|=|Y'|\geq \frac{k-1}{2\Delta}\geq \frac{|U|}{80r^2\Delta}$, so the required inequality follows from $|W|\leq \exp(\exp(O_r(\Delta)))|U|$.
\end{proof}

The next lemma follows fairly easily by a repeated application of Lemma \ref{lem:find good subgraph} \ref{statement:absorber}. In an edge-coloured graph $G$ whose colours are labelled by the first $r$ positive integers, we write $N_i(w)$ for the set of vertices $u\in V(G)$ such that the edge $wu$ has colour $i$.

\begin{lemma} \label{lem:combine absorbers}
    Let $\eps=\Omega_r(1)^{\Delta}$ satisfy $\eps<1/100$ and let $c=c(r)$ be the constant from Lemma \ref{lem:embed hypergraph carefully}. Let $G$ be an $r$-edge coloured complete graph with the colours being the first $r$ positive integers. Let $k\in [r]$ and let $U$, $V_1,\dots,V_k$ and $W$ be subsets of $V(G)$ such that $U\cap V_i=\emptyset$ for each $1\leq i\leq k$. Assume that for every $w\in W$, there exists some $i\in [k]$ such that $|N_{i}(w)\cap U|\geq \frac{|U|}{4r}$. Moreover, assume that for each $i\in [k]$, fewer than $4^{-\Delta}c^{\Delta^2}\binom{|U|}{\Delta}$ sets of size $\Delta$ in $U$ have fewer than $\eps|V_i|$ common neighbours in colour $i$ within $V_i$. Finally, assume that $|W|\leq \exp(\exp(O_r(\Delta)))|U|$ and that for each $i\in [k]$, $|U|\leq \frac{\eps}{8}|V_i|$.
        
    Then, there is a set $D\subset V(G)$ of size at most $|U|/(16r)$ such that for any $Z\subset W$, $G[D\cup Z]$ has a monochromatic $\FF$-tiling of size at most $O_r(1)^{\Delta}$.
\end{lemma}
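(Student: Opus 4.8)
The overall strategy is to split $W$ according to which colour witnesses its density hypothesis, build one absorber per colour using Lemma~\ref{lem:find good subgraph}, and then glue the resulting tilings. Before that, let me dispose of a degenerate case: there is a constant $C_0=C_0(r)$ such that if $|U|<\exp(C_0\Delta)$, then $|W|\le\exp(\exp(O_r(\Delta)))|U|\le\exp(\exp(O_r(\Delta)))$, so taking $D=\emptyset$ (which vacuously has size at most $|U|/(16r)$) and applying the second assertion of Corollary~\ref{cor:mono cover} to $G[Z]$ for any $Z\subset W$ gives a monochromatic $\FF$-tiling of size at most $64\Delta r^\Delta(\log|Z|+2)=O_r(1)^\Delta$. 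So from now on I assume $|U|\ge\exp(C_0\Delta)$, and I choose $C_0$ large enough that $|U'|\ge 100r^2$ and $|W|\le\exp\!\bigl(\tfrac{c^\Delta}{80r^2\Delta}|U'|\bigr)$ hold whenever $|U'|\ge\tfrac{15}{16}|U|$ — this is all the large-$|U|$ bookkeeping will need.

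Next I partition $W=W_1\sqcup\cdots\sqcup W_k$ by placing $w$ into $W_i$ for the least $i\in[k]$ with $|N_i(w)\cap U|\ge|U|/(4r)$. Then I build absorbers $F_1=(X_1,Y_1),\dots,F_k=(X_k,Y_k)$ greedily. Having built $F_1,\dots,F_{i-1}$, set $U_{i-1}=U\setminus\bigcup_{j<i}X_j$ and $V_i'=V_i\setminus\bigcup_{j<i}Y_j$; since $v(F_j)\le|U_{j-1}|/(16r^2)\le|U|/(16r^2)$ and $j$ runs over fewer than $r$ indices, $|\bigcup_{j<i}X_j|$ and $|\bigcup_{j<i}Y_j|$ are each less than $|U|/(16r)$, so $|U_{i-1}|\ge\tfrac{15}{16}|U|$ and $|V_i'|\ge\tfrac{15}{16}|V_i|$. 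I apply part~\ref{statement:absorber} of Lemma~\ref{lem:find good subgraph} with colour $i$ in the role of red, $U_{i-1}$ in place of $U$, $V_i'$ in place of $V$, $W_i$ in place of $W$, and $\eps/2$ in place of $\eps$ (keeping $\theta=\tfrac{1}{2\Delta^2(32r)^\Delta}$). The hypotheses hold: for $w\in W_i$, $|N_i(w)\cap U_{i-1}|\ge\tfrac{|U|}{4r}-\tfrac{|U|}{16r}\ge\tfrac{|U|}{8r}\ge\tfrac{|U_{i-1}|}{8r}$; also $|U_{i-1}|\le|U|\le\tfrac{\eps}{8}|V_i|\le\tfrac{\eps}{4}|V_i'|$; and for the density condition, a $\Delta$-set $S\subset U_{i-1}$ with fewer than $\tfrac{\eps}{2}|V_i'|$ common colour-$i$ neighbours in $V_i'$ has fewer than $\tfrac{\eps}{2}|V_i'|+|V_i\setminus V_i'|\le\tfrac{\eps}{2}|V_i|+\tfrac{|U|}{16r}\le\eps|V_i|$ common colour-$i$ neighbours in $V_i$, so there are fewer than $4^{-\Delta}c^{\Delta^2}\binom{|U|}{\Delta}\le(\tfrac{15}{16})^\Delta c^{\Delta^2}\binom{|U|}{\Delta}\le c^{\Delta^2}\binom{|U_{i-1}|}{\Delta}$ of them. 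This yields a red-$(\eta,\theta)$-good $F_i$ with $X_i\subset U_{i-1}$, $Y_i\subset V_i'$, $v(F_i)\le|U_{i-1}|/(16r^2)$, and the absorbing guarantee: $G[X_i\cup Y_i\cup Z_i]$ has a monochromatic $\FF$-tiling of size $O_r(1)^\Delta$ for every $Z_i\subset W_i$.

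Finally I set $D=\bigcup_{i=1}^k(X_i\cup Y_i)$, so $|D|\le\sum_{i=1}^k v(F_i)\le k\cdot|U|/(16r^2)\le|U|/(16r)$. Given $Z\subset W$, let $Z'=Z\setminus D$ and $Z_i=Z'\cap W_i$; then $D\cup Z=D\cup Z'$, and the sets $X_i\cup Y_i\cup Z_i$ $(i\in[k])$ partition it into pairwise disjoint blocks — the $X_i$ are disjoint because the $U_{i-1}$ are nested, the $Y_i$ because the $V_i'$ are nested, each $X_i$ avoids every $Y_j$ since $U\cap V_j=\emptyset$, the $Z_i$ are mutually disjoint since the $W_i$ partition $W$, and the $Z_i$ avoid $D$ by construction of $Z'$. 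Applying the absorbing guarantee of each $F_i$ to $Z_i$ and concatenating the $k\le r$ tilings gives a monochromatic $\FF$-tiling of $G[D\cup Z]$ of size $k\cdot O_r(1)^\Delta=O_r(1)^\Delta$, as required.

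I expect the only genuinely delicate point to be the quantitative bookkeeping in the greedy step: after removing the previously used vertices from $U$ and from each $V_i$, one must re-verify the three numerical hypotheses of Lemma~\ref{lem:find good subgraph}. The slacks built into the statement of the present lemma — the factor $4^{-\Delta}$ in the density hypothesis, $\tfrac14$ versus $\tfrac18$ in the $|N_i(w)\cap U|$ bound, and $\tfrac{\eps}{8}$ versus $\tfrac{\eps}{2}$ in the $|U|/|V_i|$ bound — are exactly what absorb the $O(r)$ deletions (together with dropping $\eps$ to $\eps/2$ to offset the shrinking of each $V_i$); matching up these constants is routine but requires care. Everything else — the partition of $W$, the disjointness of the blocks, and the concatenation of tilings — is straightforward.
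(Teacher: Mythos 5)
Your proof is correct and follows essentially the same greedy absorber-per-colour strategy as the paper (the paper's degenerate threshold is phrased as $|U|<200r^2\Delta$ or $|W|>\exp\bigl(\frac{c^{\Delta}}{160r^2\Delta}|U|\bigr)$ rather than $|U|<\exp(C_0\Delta)$, and it keeps the $W_i$ overlapping and disjointifies only the $Z_i$ at the end, but these are cosmetic differences). One tiny slip: the intermediate inequality $\bigl(\tfrac{15}{16}\bigr)^{\Delta}\binom{|U|}{\Delta}\le\binom{|U_{i-1}|}{\Delta}$ is not quite right (the ratio is only $\ge\bigl(\tfrac{7}{8}\bigr)^{\Delta}$ for large $|U|$), but since $\bigl(\tfrac{7}{8}\bigr)^{\Delta}\ge 4^{-\Delta}$ the needed conclusion $4^{-\Delta}\binom{|U|}{\Delta}\le\binom{|U_{i-1}|}{\Delta}$ still holds.
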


\begin{proof}
    First suppose that $|U|<200r^2\Delta$ or $|W|>\exp(\frac{c^{\Delta}}{160r^2\Delta}|U|)$. Then, since $|W|\leq \exp(\exp(O_r(\Delta)))|U|$, it follows that $|U|\leq \exp(\exp(O_r(\Delta)))$, and hence also $|W|\leq \exp(\exp(O_r(\Delta)))$. Now note that for any $Z\subset W$, $Z$ has size at most $\exp(O_r(1)^{\Delta})$ and therefore Corollary \ref{cor:mono cover} implies that $G[Z]$ has a monochromatic $\FF$-tiling of size at most $O_r(1)^{\Delta}$. Hence, in this case we can take $D=\emptyset$.
    
    Now assume that $|U|\geq 200r^2\Delta$ and $|W|\leq \exp(\frac{c^{\Delta^2}}{160r^2\Delta}|U|)$.
    
    For each $i\in [k]$, let $W_i$ be the set of vertices $w\in W$ for which $|N_i(w)\cap U|\geq \frac{|U|}{4r}$.

    \noindent \emph{Claim.} There exist pairwise disjoint sets $D_1,D_2,\dots,D_k\subset V(G)$ of size at most $|U|/(16r^2)$ each, such that for every $1\leq i\leq k$ and every $Z\subset W_i$, $G[D_i\cup Z]$ has a monochromatic $\FF$-tiling of size at most $O_r(1)^{\Delta}$.
    
    \noindent \emph{Proof of Claim.} We construct the sets one by one. Suppose that suitable $D_1,D_2,\dots,D_{i-1}$ have already been found. Let $\tilde{U}=U\setminus (D_1\cup \dots \cup D_{i-1})$ and $\tilde{V_i}=V_i\setminus (D_1\cup \dots \cup D_{i-1})$. We shall apply Lemma \ref{lem:find good subgraph} \ref{statement:absorber} with $\tilde{U}$ in place of $U$, $\tilde{V_i}$ in place of $V$ and $W_i$ in place of $W$. Since $|D_1\cup \dots \cup D_{i-1}|\leq r\cdot |U|/(16r^2)=|U|/(16r)$, it follows that every $w\in W_i$ satisfies that $|N_i(w)\cap \tilde{U}|\geq \frac{|U|}{8r}\geq \frac{|\tilde{U}|}{8r}$. Moreover, since $|\tilde{U}|\geq 2\Delta$ and $|\tilde{U}|/|U|\geq 1/2$, we have $\frac{\binom{|\tilde{U}|}{\Delta}}{\binom{|U|}{\Delta}}\geq (\frac{|\tilde{U}|}{2|U|})^{\Delta}\geq 4^{-\Delta}$, so there are fewer than $c^{\Delta^2}\binom{|\tilde{U}|}{\Delta}$ sets of $\Delta$ vertices in $\tilde{U}$ which have fewer than $\eps|V_i|$ common neighbours in colour $i$ inside $V_i$. Since $|D_1\cup \dots \cup D_{i-1}|\leq |U|/(16r)$ and $|U|\leq \frac{\eps}{8}|V_i|$, it follows that $|D_1\cup \dots \cup D_{i-1}|\leq \frac{\eps}{2}|V_i|$. Hence, there are fewer than $c^{\Delta^2}\binom{|\tilde{U}|}{\Delta}$ sets of $\Delta$ vertices in $\tilde{U}$ which have fewer than $\frac{\eps}{2}|\tilde{V_i}|$ common neighbours in colour $i$ inside $\tilde{V_i}$. Also, $|\tilde{U}|\leq |U|\leq \frac{\eps}{8}|V_i|\leq \frac{\eps}{4}|\tilde{V_i}|$. Moreover,
	$|\tilde{U}|\geq |U|/2\geq 100r^2$, $|W_i|\leq |W|\leq \exp(\frac{c^{\Delta}}{160r^2\Delta}|U|)\leq \exp(\frac{c^{\Delta}}{80r^2\Delta}|\tilde{U}|)$ and $|W_i|\leq \exp(\exp(O_r(\Delta)))|\tilde{U}|$.
    Thus, we can apply Lemma \ref{lem:find good subgraph} \ref{statement:absorber} (with $\eps/2$ in place of $\eps$), and we can take $D_i=X\cup Y$ for the sets $X$ and $Y$ provided by that lemma. This completes the proof of the claim. $\Box$
    
    Define $D=D_1\cup \dots \cup D_k$. Clearly, $|D|\leq k|U|/(16r^2)\leq |U|/(16r)$. Let $Z\subset W$. Define $Z_1=(Z\setminus D)\cap W_1$, $Z_2=(Z\setminus (D\cup W_1))\cap W_2$, \dots, $Z_k=(Z\setminus (D\cup W_1\cup \dots \cup W_{k-1}))\cap W_k$. Then $Z_1,\dots,Z_k$ partition $Z\setminus D$ and $Z_i\subset W_i$ holds for every $i$. By the claim above, for each $1\leq i\leq k$, $G[D_i\cup Z_i]$ has a monochromatic $\FF$-tiling of size $O_r(1)^{\Delta}$. But $D_1\cup Z_1,\dots,D_k\cup Z_k$ partition $D\cup Z$, so $G[D\cup Z]$ has a monochromatic $\FF$-tiling of size $O_r(1)^{\Delta}$.
\end{proof}

\subsection{Completing the proof of Theorem \ref{thm:tiling with bipartite sequence}} \label{sec:completing the proof}

\begin{lemma} \label{lem:induction step}
    Let $C>1$ be the constant from Lemma \ref{lem:k-set drc} and let $c$ be the constant from Lemma \ref{lem:embed hypergraph carefully}. Let $k\in [r]$. Let $\eps=\frac{1}{2^k r^{\Delta}}$, $\eps'=\frac{1}{2^{k+1}r^{\Delta}}$, $\delta=\exp(-(100c^{-1}Cr\Delta)^{2(r-k)+3})$ and $\delta'=\exp(-(100c^{-1}Cr\Delta)^{2(r-(k+1))+3})$. Let $G$ be an $r$-edge coloured complete graph with the colours labelled by the first $r$ positive integers. Let $A,B,V_1,V_2,\dots,V_{k-1}\subset V(G)$ and $K=\exp(\exp(O_r(\Delta)))$ satisfy the following properties.
    \begin{enumerate}
        \item $A$ is disjoint from $B\cup V_1\cup \dots \cup V_{k-1}$. \label{cond:l17cond1}
        \item There are at least $\frac{1}{r}|A||B|$ edges of colour $k$ between $A$ and $B$.
        \item For every $i\in [k-1]$, there are at most $\delta \binom{|A|}{\Delta}$ sets of size $\Delta$ in $A$ which have fewer than $\eps|V_i|$ common neighbours in colour $i$ inside $V_i$.
        \item $|A|\leq |B|\leq K|A|$ and $|A|\leq |V_i|$ for all $i\in [k-1]$. \label{cond:l17cond4}
    \end{enumerate}
    
    Then there are pairwise disjoint sets  $A'\subset A$, $B'\subset A\cup B$ and $D$ such that for any $Z\subset (A\cup B)\setminus (A'\cup B')$, $G[D\cup Z]$ has a monochromatic $\FF$-tiling of size at most $O_r(1)^{\Delta}$; and either $|A'\cup B'|\leq 2$ or there exist $V_1',V_2',\dots,V_k'\subset V(G)$ and $K'=\exp(\exp(O_r(\Delta)))$ such that the following hold.
    \begin{enumerate}[label=(\roman*)]
        \item $A'$ is disjoint from $B'\cup V_1'\cup \dots \cup V_k'$.
        \item There is some $j\in [r]\setminus [k]$ such that there are at least $\frac{1}{r}|A'||B'|$ edges of colour $j$ between $A'$ and $B'$.
        \item For every $i\in [k]$, there are at most $\delta' \binom{|A'|}{\Delta}$ sets of size $\Delta$ in $A'$ which have fewer than $\eps'|V_i'|$ common neighbours in colour $i$ inside $V_i'$.
        \item $|A'|\leq |B'|\leq K'|A'|$ and $|A'|\leq |V_i'|$ for all $i\in [k]$.
        \item $D$ is disjoint from $A'\cup B'\cup V_1'\cup \dots \cup V_k'$.
    \end{enumerate}
\end{lemma}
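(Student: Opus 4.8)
The plan is to establish the required structure by one round of dependent random choice in colour $k$, then build the absorber $D$ using Subsection~\ref{sec:good subgraph}, and finally analyse the part of $A\cup B$ that the absorber misses.

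\emph{The trivial case.} If $|A|$ is below a suitable threshold $N_0=\exp(\exp(O_r(\Delta)))$, then $|A\cup B|\le (K+1)|A|=\exp(\exp(O_r(\Delta)))$, so by Corollary~\ref{cor:mono cover} every $Z\subseteq A\cup B$ has a monochromatic $\FF$-tiling of size $O_r(1)^\Delta$; we take $A'=B'=D=\emptyset$ and are in the case $|A'\cup B'|\le 2$. So from now on assume $|A|>N_0$, and in particular that $|S|$ below is large.

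\emph{Dependent random choice and transfer of conditions.} Let $\tilde A\subseteq A$ consist of the $\lfloor\tfrac{\eps}{16}|A|\rfloor$ vertices of $A$ with the largest colour-$k$ degree into $B$; then $\tilde A$ sends at least $\tfrac1r|\tilde A||B|$ colour-$k$ edges to $B$, and $|\tilde A|\le\tfrac{\eps}{8}|A|\le\tfrac{\eps}{8}\min(|V_1|,\dots,|V_{k-1}|,|B|)$. Apply Lemma~\ref{lem:k-set drc} (with its parameter $k$ taken to be $\Delta$) to the colour-$k$ bipartite graph between $\tilde A$ and $B$, with a constant $\delta_1$ small enough that $\delta_1|S|^\Delta<4^{-\Delta}c^{\Delta^2}\binom{|S|}{\Delta}$ (e.g.\ $\delta_1=4^{-\Delta-1}c^{\Delta^2}\Delta^{-\Delta}$ works); this gives $S\subseteq\tilde A$ with $|S|\ge\delta_1^{O_r(1)}|A|$ such that all but at most $\delta_1|S|^\Delta$ of the $\Delta$-subsets of $S$ have at least $\tfrac{(1/r)^\Delta}{2}|B|\ge\eps|B|$ common colour-$k$ neighbours in $B$. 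Restricting the hypothesised colour-$i$ conditions ($i<k$) from $A$ down to $S$ costs only a factor $\binom{|S|}{\Delta}/\binom{|A|}{\Delta}=\exp(-O_r(\Delta^3))$, and this together with the DRC loss is absorbed by the generous gap between $\delta$ and $\delta'$ (the exponents differ by $2$, and $2(r-k)+3\ge 5$, so $\delta/\delta'$ is of the form $\exp(-\Omega_r(\Delta^5))$). Writing $R_i=V_i$ for $i<k$ and $R_k=B$, we conclude that for every $i\in[k]$ at most $\delta'\binom{|S|}{\Delta}<4^{-\Delta}c^{\Delta^2}\binom{|S|}{\Delta}$ of the $\Delta$-subsets of $S$ have fewer than $\eps|R_i|$ common colour-$i$ neighbours in $R_i$.

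\emph{The absorber and the leftover.} Apply Lemma~\ref{lem:combine absorbers} with $U=S$, colours $1,\dots,k$, reservoirs $R_1,\dots,R_k$, and $W:=\{w\in A\cup B:|N_i(w)\cap S|\ge|S|/(4r)\text{ for some }i\in[k]\}$; the hypotheses hold since $|W|\le|A\cup B|\le 2K|A|\le\exp(\exp(O_r(\Delta)))|S|$ and $|S|\le\tfrac{\eps}{8}|R_i|$. This yields $D_0$ with $|D_0|\le|S|/(16r)$ such that $G[D_0\cup Z]$ has a monochromatic $\FF$-tiling of size $O_r(1)^\Delta$ for every $Z\subseteq W$. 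Set $V_i':=R_i\setminus D_0$; then $D_0\cap V_i'=\emptyset$, and because $|D_0|\le\tfrac{\eps'}{128r}|R_i|$ and $\eps=2\eps'$, every $\Delta$-subset of $S$ good for threshold $\eps|R_i|$ in $R_i$ remains good for threshold $\eps'|V_i'|$ in $V_i'$; thus (iii) will hold for any $A'\subseteq S$, and likewise $|A'|\le|V_i'|$. Put $W^*:=(A\cup B)\setminus(W\cup D_0)$. If $|W^*|\le 2$, split $W^*$ into $A'\subseteq A$ and $B'\subseteq A\cup B$, take $D:=D_0$, and note that $(A\cup B)\setminus(A'\cup B')\subseteq W\cup D_0$, where for $Z\subseteq W\cup D_0$ one has $G[D_0\cup Z]=G[D_0\cup(Z\cap W)]$, which has the required tiling. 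Otherwise every $w\in W^*$ satisfies $\sum_{j>k}|N_j(w)\cap S|>\tfrac34|S|-1$, so some colour $j>k$ has $|N_j(w)\cap S|\ge|S|/(2r)$; averaging over the $\le r-k$ colours exceeding $k$ produces a colour $j^*\in[r]\setminus[k]$ witnessing this for a set $W^*_{j^*}\subseteq W^*$ with $|W^*_{j^*}|\ge|W^*|/r$. Taking (a suitable subset of) $S\setminus D_0$ as $A'$ and $W^*_{j^*}\setminus S$ as $B'$ gives $A'\subseteq A$ and $B'\subseteq A\cup B$, both disjoint from $D_0$, with many colour-$j^*$ edges between them, $|A'|\le|B'|$, and $|B'|/|A'|\le|A\cup B|/|S|=\exp(\exp(O_r(\Delta)))$. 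The colour classes $W^*_j$ ($j\ne j^*$) not covered by $A'\cup B'$ are dealt with by adjoining to $D_0$, for each such $j$, one further absorber obtained from another application of Lemma~\ref{lem:k-set drc} and Lemma~\ref{lem:find good subgraph} to the dense colour-$j$ bipartite graph between $W^*_j$ and $S$; call the result $D$. Then $A'\cup B'$ together with $D$ covers all of $A\cup B$ as required, and after relabelling colour $j^*$ as $k+1$ one verifies (i)--(v) with $K'=\exp(\exp(O_r(\Delta)))$.

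\emph{Main obstacle.} Two things need care. The arithmetic point is to check that the single-step decreases $\delta\to\delta'$ and $\eps\to\eps'$ swallow every multiplicative loss — from the DRC, from restricting $\Delta$-set conditions to $S$, and from deleting $D$ from the reservoirs — which is exactly what the shape $\delta=\exp\!\big(-(100c^{-1}Cr\Delta)^{2(r-k)+3}\big)$ and the numerical factor $100$ are designed for. The structural point, and the genuinely delicate one, is to guarantee that $D$ together with the one passed-down pair $(A',B')$ accounts for \emph{all} of $A\cup B$: the vertices the colour-$(\le k)$ absorber cannot handle have their colour to $S$ concentrated on $\{k+1,\dots,r\}$, and the work is to peel off from them a single $(1/r)$-dense bipartite pair in one new colour while folding the remaining colour classes into $D$.
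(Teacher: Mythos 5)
Your opening moves mirror the paper's: dispose of small $|A|$ via Corollary \ref{cor:mono cover}, run dependent random choice (Lemma \ref{lem:k-set drc}) in colour $k$ between $A$ and $B$, append $B$ as a $k$-th reservoir $V_k$, define $W$ to be the vertices whose colour-$(\le k)$ degree into the DRC output is large, and then invoke Lemma \ref{lem:combine absorbers} to get the absorber. The arithmetic checks on $\delta\to\delta'$ and $\eps\to\eps'$, and the bookkeeping of $K'$, are also of the same flavour as the paper's and are plausible if done carefully. Up to this point you and the paper agree.

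The genuine gap is in how you carve out $A'$ and $B'$ and, correspondingly, what the leftover $(A\cup B)\setminus(A'\cup B')$ looks like. The lemma demands that \emph{every} subset $Z$ of this leftover be absorbable by $D$. The paper achieves this by brute design: it sets $A'\cup B'$ equal to the entire set $(A\cup B)\setminus(D\cup W)$ (with $A'$ a random subset of $U\setminus D$ and $B'$ the complement within that set), so the leftover is automatically contained in $D\cup W$, which the absorber handles. The fact that some colour $j>k$ then has density $\ge 1/r$ between $A'$ and $B'$ is not built in by choosing $j$ first; it is \emph{deduced a posteriori} from the observation that every $z\in B'$ avoids $W$, so $\sum_{i\le k,\, z\in B'}|N_i(z)\cap A'|$ is small in expectation over the random $A'$, and $G$ being complete forces the remaining $\ge(1-k/r)$ fraction of pairs into colours $>k$.

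Your proposal instead fixes a single colour $j^*$ first and sets $B':=W^*_{j^*}\setminus S$, with $A'$ a subset of the DRC output $S\subset A$. This leaves a large portion of $A\cup B$ outside $A'\cup B'\cup W\cup D_0$: all of $W^*_j$ for $j\neq j^*$, the parts of $W^*_{j^*}$ lying in $S$, all of $A\setminus S$ that escaped $W$, and so on. None of these are handled by $D_0$ (which only absorbs subsets of $W$). You acknowledge this and propose to build one extra absorber per leftover colour class $W^*_j$ by a fresh application of Lemma \ref{lem:k-set drc} and Lemma \ref{lem:find good subgraph} on the colour-$j$ graph between $W^*_j$ and $S$, but this does not close the gap: those absorbers themselves consume vertices of $S$ (competing with $A'$ and with $D_0$, and all must remain pairwise disjoint and disjoint from $A'\cup B'$), and even after adding them the leftover still contains vertices of $A\cup B$ that are outside every $W^*_j$-absorber's designated target set $W$ (e.g.\ $W^*_{j^*}\cap S$, and the parts of $A\setminus S$ outside $W$). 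The claim that ``$A'\cup B'$ together with $D$ covers all of $A\cup B$'' is not established and, as set up, is false. The fix is exactly the paper's: do not pre-commit to one colour; take $A'\cup B'=(A\cup B)\setminus(D\cup W)$, and use randomness in the choice of $A'$ to extract a colour $j>k$ that is dense between $A'$ and $B'$.
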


\begin{proof}
    Let $\delta''=\exp(-(100c^{-1}Cr\Delta)^{2(r-k)+2})$. If $|A|< 4\Delta(\delta'')^{-C}$, then by Corollary \ref{cor:mono cover} we may take $A'=B'=D=\emptyset$. So assume that $|A|\geq 4\Delta(\delta'')^{-C}$.
    
    By Lemma~\ref{lem:k-set drc} applied to the graph formed by edges of colour $k$ between $A$ and $B$ (with $\delta''$ playing the role of $\delta$), there is a set $U\subset A$ of size at least $(\delta'')^{C}|A|$ such that at most $\delta'' |U|^{\Delta}$ sets of $\Delta$ vertices in $U$ have fewer than $\eps|B|$ common neighbours in colour $k$ inside $B$. Since $|U|\geq 2\Delta$, it follows that $|U|^{\Delta}/\binom{|U|}{\Delta}\leq (2\Delta)^{\Delta}$, so at most $(2\Delta)^{\Delta}\delta''\binom{|U|}{\Delta}$ sets of $\Delta$ vertices in $U$ have fewer than $\eps |B|$ common neighbours in colour $k$ inside $B$. By replacing $U$ with a random subset of size $\lceil (\delta'')^C |A|\rceil$, we may assume that $(\delta'')^{C}|A|\leq |U|\leq 2(\delta'')^{C}|A|$. Again, by $|U|\geq 2\Delta$, we have $\frac{\binom{|U|}{\Delta}}{\binom{|A|}{\Delta}}\geq (\frac{|U|}{2|A|})^{\Delta}\geq 2^{-\Delta}(\delta'')^{C\Delta}$. Note that $\frac{\delta}{2^{-\Delta}(\delta'')^{C\Delta}}\leq \delta''$, so for every $1\leq i\leq k-1$, there are at most $\delta''\binom{|U|}{\Delta}$ sets of size $\Delta$ in $U$ which have fewer than $\eps|V_i|$ common neighbours in colour $i$ inside $V_i$. Setting $V_k=B$, we get that for every $1\leq i\leq k$, there are at most $(2\Delta)^{\Delta}\delta''\binom{|U|}{\Delta}$ sets of $\Delta$ vertices in $U$ which have fewer than $\eps|V_i|$ common neighbours in colour $i$ inside $V_i$. Let $W$ be the set of vertices $w\in A\cup B$ for which there exists $i\in [k]$ with $|N_i(w)\cap U|\geq \frac{|U|}{4r}$. Note that $|W|\leq |A|+|B|\leq 2K|A|\leq 2K(\delta'')^{-C}|U|\leq \exp(\exp(O_r(\Delta)))|U|$. Moreover, for every $i\in [k]$, $|U|\leq 2(\delta'')^C|A|\leq \frac{\eps}{8}|A|\leq \frac{\eps}{8}|V_i|$.
    
    Since $(2\Delta)^{\Delta}\delta''< 4^{-\Delta}c^{\Delta^2}$, Lemma \ref{lem:combine absorbers} implies that there is a set $D\subset V(G)$ of size at most $|U|/(16r)$ such that for any $Z\subset W$, $G[D\cup Z]$ has a monochromatic $\FF$-tiling of size at most $O_r(1)^{\Delta}$.
    
    Let $S=(A\cup B)\setminus (D\cup W)$ and let $U'=U\setminus D$. If $|U'|<|S|/2$, let $A'=U'$; otherwise let $A'$ be a uniformly random subset of $U'$ of size $\lfloor |S|/2\rfloor$. In either case, let $B'=S\setminus A'$. It is easy to see that $A'$, $B'$ and $D$ are pairwise disjoint. Also, $S\subset A'\cup B'$, so $(A\cup B)\setminus (A'\cup B')\subset (A\cup B)\setminus S\subset D\cup W$. Let $Z\subset (A\cup B)\setminus (A'\cup B')$. By the above, we have $Z\subset D\cup W$, so $Z\setminus D\subset W$. Hence, $G[D\cup Z]=G[D\cup (Z\setminus D)]$ has a monochromatic $\FF$-tiling of size at most $O_r(1)^{\Delta}$. This means that we are done if $|A'\cup B'|\leq 2$, so let us assume that $|A'\cup B'|\geq 3$. Then necessarily $|S|>1$ and $A'\neq \emptyset$.
    
    For every $1\leq i\leq k$, let $V_i'=V_i\setminus D$. It is clear that $D$ is disjoint from $A'\cup B'\cup V_1'\cup \dots \cup V_k'$. Moreover, since $A'\subset A$ and $V_i'\subset V_i$ for all $i\in [k]$, it follows that $A'$ is disjoint from $B'\cup V_1'\cup \dots \cup V_k'$.
    
    Note that $|A'|\leq |S|/2\leq |B'|$. Also, if $|U'|<|S|/2$, then $|A'|=|U'|\geq |U|/2\geq \frac{(\delta'')^C}{2}|A|\geq \frac{(\delta'')^C}{2(K+1)}|A\cup B|\geq \frac{(\delta'')^C}{2(K+1)}|B'|$. On the other hand, if $|U'|\geq |S|/2$, then $|A'|=\lfloor |S|/2\rfloor$, whereas $|B'|\leq |S|$. Thus, in both cases, $|B'|\leq K'|A'|$ for some $K'=\exp(\exp(O_r(\Delta)))$. Finally, for every $1\leq i\leq k$, $|V_i'|\geq |V_i|/2\geq |A|/2$ and $|A'|\leq |U|\leq |A|/2$, so $|A'|\leq |V_i'|$.
    
    Recall that for every $1\leq i\leq k$, there are at most $(2\Delta)^{\Delta}\delta''\binom{|U|}{\Delta}$ sets of $\Delta$ vertices in $U$ which have fewer than $\eps|V_i|$ common neighbours in colour $i$ inside $V_i$. Moreover, $\binom{|U|}{\Delta}/\binom{|U'|}{\Delta}\leq 4^{\Delta}$ (since $|U'|\geq |U|/2\geq 2\Delta$) and $|D|\leq |U|\leq \frac{\eps}{8}|V_i|$, so there are at most $4^{\Delta}(2\Delta)^{\Delta}\delta''\binom{|U'|}{\Delta}$ sets of $\Delta$ vertices in $U'$ which have fewer than $\frac{\eps}{2}|V_i'|$ common neighbours in colour $i$ inside $V_i'$. Since $A'$ is a uniformly random subset of $U'$ of certain prescribed size, it follows that for each $1\leq i\leq k$, the expected number of sets of $\Delta$ vertices in $A'$ which have fewer than $\frac{\eps}{2}|V_i'|$ common neighbours in colour $i$ inside $V_i'$ is at most $(8\Delta)^{\Delta}\delta''\binom{|A'|}{\Delta}$. Hence, by Markov's inequality, for any $1\leq i\leq k$, the probability that the number of sets of $\Delta$ vertices in $A'$ which have fewer than $\frac{\eps}{2}|V_i'|$ common neighbours in colour $i$ inside $V_i'$ is at least $6k\cdot (8\Delta)^{\Delta}\delta''\binom{|A'|}{\Delta}$ is at most $\frac{1}{6k}$. Thus, by the union bound, it holds with probability at least $5/6$ that for every $1\leq i\leq k$, the number of sets of $\Delta$ vertices in $A'$ which have fewer than $\frac{\eps}{2}|V_i'|$ common neighbours in colour $i$ inside $V_i'$ is at most $6k\cdot (8\Delta)^{\Delta}\delta''\binom{|A'|}{\Delta}$. Note that $\delta'\leq 6k\cdot (8\Delta)^{\Delta}\delta''$.
    
    Now let $1\leq i\leq k$ and let $z\in S$. Then $z\not \in W$, so $|N_i(z)\cap U|<\frac{|U|}{4r}$. Moreover, $|U'|\geq \frac{3}{4}|U|$, so $|N_i(z)\cap U'|<\frac{|U'|}{3r}$. Hence, $\sum_{i\in [k], z\in S} |N_i(z)\cap U'|<\frac{k}{3r}|S||U'|$. Recall that either $A'=U'$ or $A'$ is a uniformly random subset of $U'$ of size $\lfloor |S|/2\rfloor$. In either case,
    $\mathbb{E}\left[\sum_{i\in [k], z\in S} |N_i(z)\cap A'|\right]<\frac{k}{3r}|S||A'|$.
    Thus, the probability that $\sum_{i\in [k], z\in S} |N_i(z)\cap A'|\geq \frac{k}{2r}|S||A'|$ is at most $2/3$. But $|S|\leq 2|B'|$, so then the probability that $\sum_{i\in [k], z\in B'} |N_i(z)\cap A'|\geq \frac{k}{r}|B'||A'|$ is also at most $2/3$.
    It follows from this and the previous paragraph that there exists an outcome for the random set $A'$ such that the number of edges between $A'$ and $B'$ with colour in $[k]$ is less than $\frac{k}{r}|A'||B'|$ and for each $i\in [k]$, the number of sets of $\Delta$ vertices in $A'$ which have fewer than $\eps'|V_i'|$ common neighbours in colour $i$ inside $V_i'$ is at most $\delta'\binom{|A'|}{\Delta}$. Then there is some $j\in [r]\setminus [k]$ such that there are at least $\frac{1}{r}|A'||B'|$ edges of colour $j$ between $A'$ and $B'$.
    
    This completes the proof of the lemma.
\end{proof}

The next result follows easily from an iterative application of Lemma \ref{lem:induction step}.

\begin{corollary} \label{cor:iterated absorbers}
    Let $G$ be an $r$-edge coloured complete graph. Then there exist $0\leq \ell\leq r$, pairwise disjoint sets $D_1,\dots,D_{\ell}\subset V(G)$ and sets $T_{\ell+1}\subset T_{\ell}\subset \dots \subset T_1=V(G)$ such that $|T_{\ell+1}|\leq 2$ and for any $1\leq i\leq \ell$ and $Z\subset T_i\setminus T_{i+1}$, $G[D_i\cup Z]$ has a monochromatic $\FF$-tiling of size $O_r(1)^{\Delta}$.
\end{corollary}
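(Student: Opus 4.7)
The plan is to prove the corollary by iteratively invoking Lemma~\ref{lem:induction step}. If $|V(G)|\le 2$, take $\ell=0$ and $T_1=V(G)$. Otherwise, fix any balanced partition $V(G)=A_1\sqcup B_1$ with $|A_1|\le|B_1|\le 2|A_1|$; by pigeonhole some colour has at least $\tfrac{1}{r}|A_1||B_1|$ edges between $A_1$ and $B_1$, and after relabelling we may take it to be colour $1$. This furnishes the hypotheses of Lemma~\ref{lem:induction step} at level $k=1$ with $T_1=A_1\cup B_1$ (the $V_i$ conditions being vacuous).

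Assume inductively that at the start of iteration $i$ we have disjoint sets $A_i,B_i$ with $T_i=A_i\cup B_i$, auxiliary sets $V_1^{(i)},\dots,V_{i-1}^{(i)}$, pairwise disjoint absorbers $D_1,\dots,D_{i-1}$, and that the hypotheses of Lemma~\ref{lem:induction step} at level $k=i$ hold with colour $i$ as the edge colour. Apply the lemma to obtain $A_i'\subset A_i$, $B_i'\subset A_i\cup B_i$, the absorber $D_i$, and (unless we halt) the sets $V_1^{(i+1)},\dots,V_i^{(i+1)}$. Put $A_{i+1}=A_i'$, $B_{i+1}=B_i'$, $T_{i+1}=A_i'\cup B_i'$; the lemma's conclusion gives exactly the absorption property required by the corollary on $T_i\setminus T_{i+1}$. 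If $|T_{i+1}|\le 2$ we halt with $\ell=i$; otherwise the lemma supplies a colour $j\in[r]\setminus[i]$ with at least $\tfrac{1}{r}|A_{i+1}||B_{i+1}|$ edges of colour $j$ between $A_{i+1}$ and $B_{i+1}$, and since the constraints on $V_1^{(i+1)},\dots,V_i^{(i+1)}$ only involve colours in $[i]$, swapping the labels $j\leftrightarrow i+1$ places us in the level-$(i+1)$ setting without disturbing anything already established.

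The iteration must stop with $\ell\le r$: at level $k=r$ the conclusion would require a colour in $[r]\setminus[r]=\emptyset$, forcing $|A_r'\cup B_r'|\le 2$. For the pairwise disjointness of the $D_i$'s, clause~(v) of Lemma~\ref{lem:induction step} gives $D_i\cap (A_{i+1}\cup B_{i+1}\cup V_1^{(i+1)}\cup\dots\cup V_i^{(i+1)})=\emptyset$. A short induction on $i'-i$ shows that for all $i'>i$, the sets $A_{i'},B_{i'}$ and $V_j^{(i')}$ are contained in this union: subsequent $A\cup B$'s only shrink, each retained $V_j$ only shrinks, and every newly introduced $V_{j'}^{(j'+1)}=B_{j'}\setminus D_{j'}$ lies inside the previous $B_{j'}\subset A_{i+1}\cup B_{i+1}$. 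Since the construction inside Lemma~\ref{lem:combine absorbers} places $D_{i'}$ inside $A_{i'}\cup B_{i'}\cup V_1^{(i')}\cup\dots\cup V_{i'-1}^{(i')}$, we conclude $D_i\cap D_{i'}=\emptyset$.

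There is no real obstacle; the main bookkeeping is the colour relabelling and a routine verification that the concrete $\eps_k,\delta_k,K_k$ in Lemma~\ref{lem:induction step} propagate correctly through up to $r$ iterations, which has already been arranged by the specific forms chosen there.
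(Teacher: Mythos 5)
Your proof is correct and takes essentially the same route as the paper: iterate Lemma~\ref{lem:induction step} starting from a balanced bipartition with the majority colour labelled $1$, relabel the special colour at each step, set $T_{i+1}=A'\cup B'$, and stop once $|A'\cup B'|\le 2$, which must occur within $r$ steps since the output colour lives in $[r]\setminus[k]$. You also carefully verify a detail the paper compresses into the phrase ``after each step removing the subset $D$ from the graph''---namely that the $D_i$'s are pairwise disjoint---by combining clause (v) of the lemma with the inclusion $D_{i'}\subset A_{i'}\cup B_{i'}\cup V_1^{(i')}\cup\dots\cup V_{i'-1}^{(i')}$, and your accounting is correct.
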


\begin{proof}
    Let $T_1=V(G)$. If $|T_1|\leq 2$, then we can take $\ell=0$. Else, let $A$ be an arbitrary subset of $V(G)$ of size $\lfloor |V(G)|/2 \rfloor$ and let $B=V(G)\setminus A$. Label the most frequent colour between $A$ and $B$ by the number~1. Then the conditions (\ref{cond:l17cond1})-(\ref{cond:l17cond4}) from Lemma \ref{lem:induction step} are satisfied for $k=1$. We can now repeatedly apply the lemma, after each step removing the subset $D$ from the graph, until we eventually (after at most $r$ steps) obtain $A',B'$ with $|A'\cup B'|\leq 2$. Let $\ell$ be the number of iterations of Lemma \ref{lem:induction step} before this happens. For $1\leq i\leq \ell$, let $D_i$ be the set $D$ obtained by Lemma \ref{lem:induction step} in the $i$th iteration, and let $T_{i+1}$ be the set $A'\cup B'$ obtained by Lemma \ref{lem:induction step} in the $i$th iteration.
\end{proof}

It is easy to deduce our main result from this.

\begin{proof}[Proof of Theorem \ref{thm:tiling with bipartite sequence}.]
    Let $G$ be an $r$-edge coloured complete graph. Choose $\ell$, $D_1,\dots,D_{\ell}$, $T_1,\dots,T_{\ell+1}$ according to Corollary \ref{cor:iterated absorbers}. For each $i\in [\ell]$, let $Z_i=T_i\setminus (T_{i+1}\cup D_1\cup \dots \cup D_{\ell})$. For each $i\in [\ell]$, $G[D_i\cup Z_i]$ has a monochromatic $\FF$-tiling of size $O_r(1)^{\Delta}$. Since the sets $D_1\cup Z_1,\dots,D_{\ell}\cup Z_{\ell},T_{\ell+1}\setminus (D_1\cup \dots \cup D_{\ell})$ partition $V(G)$ and $|T_{\ell+1}|\leq 2$, it follows that $G$ has a monochromatic $\FF$-tiling of size $O_r(1)^{\Delta}$.
\end{proof}

\section{Concluding remarks}

A graph is called $d$-degenerate if each of its subgraphs has a vertex of degree at most $d$. Proving a conjecture of Burr and Erd\H os \cite{BE75}, Lee \cite{Lee17} showed that the Ramsey number of an $n$-vertex $d$-degenerate graph is at most $c(d)n$. Hence, it is natural to wonder whether it is sufficient to assume that each $F_k\in \FF$ is $d$-degenerate (rather than that it has maximum degree $d$) to guarantee the boundedness of $\tau_r(\FF)$. However, this is not the case, even when each $F_k\in \FF$ is bipartite. In fact, it is not even enough that each $F_k$ is bipartite with maximum degree at most $d$ on one side. Indeed, it was observed by Pokrovskiy (see \cite{CM21} for a proof) that when $\FF=\{S_1,S_2,S_3,\dots\}$ is the collection of stars, $\tau_r(\FF)=\infty$ holds for any $r\geq 2$.

Hence, it is perhaps slightly surprising that using similar techniques as in this paper, we can prove the following result.

\begin{theorem}
    For any $r\in \mathbb{N}$, there is a $C_r$ such that the following is true. Let $\FF=\{K_1',K_2',K_3',\dots\}$, where $K_t'$ denotes the $1$-subdivision of $K_t$ and let $G$ be an $r$-edge coloured complete graph. Then $G$ has a monochromatic $\FF$-tiling of size at most $C_r$. 
\end{theorem}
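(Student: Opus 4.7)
Our plan is to follow the proof of Theorem \ref{thm:tiling with bipartite sequence} almost step by step, treating the subdivision side of each $K_t'$ as a bipartite graph of maximum degree two. Every $K_t'$ has a bipartition $(X,Y)$ with $|X|=t$, $|Y|=\binom{t}{2}$, in which each $y\in Y$ satisfies $|N_{K_t'}(y)|=2$, and $|X|=o(|Y|)$. Hence the absorbing apparatus of Subsection \ref{sec:absorption process} (Definition \ref{def:absorber} and Lemmas \ref{lem:good partition absorbs}--\ref{lem:switch matching}) and the good-set construction of Subsection \ref{sec:construct absorber} (Lemmas \ref{lem:find one good set}--\ref{lem:find many good sets}) operate only on the low-degree side, so they go through verbatim with $\Delta=2$, and the branch vertices $X$ contribute only a negligible overhead throughout.

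The one genuinely new ingredient is a Ramsey-type step replacing Corollaries \ref{cor:big mono graph}--\ref{cor:mono cover}: in any $r$-edge coloured $K_n$ there is a monochromatic $K_t'$ with $t\geq c_r\sqrt{n}$, hence covering a constant fraction of $V(G)$. To prove this, take the majority colour (say red) and apply the pair dependent random choice (Lemma \ref{lem:dependentrc for pairs}, with parameters chosen so that the resulting $\delta$ is a sufficiently small constant) to produce $U\subset V(G)$ of size $\Omega_r(\sqrt{n})$ in which all but a tiny fraction of pairs have $\Omega_r(n)$ common red neighbours; then pass to a further subset $U'\subset U$ of comparable size with no bad pair (a standard Tur\'an-type clean-up), take $X=U'$, and greedily assign distinct common red neighbours to the $\binom{|X|}{2}$ pairs of $X$ to form $Y$. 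The greedy step succeeds because $\binom{|X|}{2}\le n/r^{O(1)}$. From this, analogues of Corollaries \ref{cor:big mono graph}--\ref{cor:mono cover} for the sequence $\FF$ follow at once.

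The rest of the argument then mirrors Subsections \ref{sec:good subgraph}--\ref{sec:completing the proof}. We construct good absorbers via a variant of Lemma \ref{lem:find good subgraph} which, for a prescribed set $W$ of potential absorbees, additionally ensures that for every $w\in W$ a positive fraction of the chosen branch pairs lies entirely in $N_{\textrm{red}}(w)\cap U$; this is arranged by a union bound over $W$ in the spirit of Lemma \ref{lem:random embedding}. We then iterate across the $r$ colours exactly as in Lemma \ref{lem:induction step} and Corollary \ref{cor:iterated absorbers} to obtain a monochromatic $\FF$-tiling of size $O_r(1)$. The main obstacle is the simultaneous enforcement, inside the adapted Lemma \ref{lem:find good subgraph}, of both (i) the pairwise-density property needed to turn $X$ into the branch side of a $K_t'$, and (ii) the absorption property for every $w\in W$; both follow from the dependent-random-choice and averaging tools already in the paper, but correctly interleaving them is the one real piece of bookkeeping the argument requires.
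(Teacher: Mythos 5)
The paper omits the proof of this theorem (it only remarks that the proof ``would require several additional pages''), so there is no authored proof to compare against; the following assesses your sketch on its own terms.

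Your high-level strategy (absorption, with the subdivision vertices playing the role of the low-degree side $Y$) is plausibly the intended one, but two of your steps do not work as stated. First, the Ramsey step: Lemma~\ref{lem:dependentrc for pairs} forces $\delta\geq 2\eps^4=2/r^4$, a constant, and produces $S\subset A$ of size $\Omega_r(|A|)$, \emph{linear} in $n$, with up to $\delta|S|^2=\Omega_r(n^2)$ bad pairs. A Tur\'an-type cleanup of the bad-pair graph on $S$ then yields an independent set of size only $\Theta(1/\delta)=O_r(1)$, not $\Omega_r(\sqrt n)$, so this route to a monochromatic $K'_t$ covering a constant fraction of the vertices collapses. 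To make the Ramsey step work one needs the general Lemma~\ref{lem:dependent random choice} with $\delta$ polynomially small in $n$ (so that the number of bad pairs is smaller than $|S|$), which forces $|S|\approx\sqrt n$ and pins down $\gamma$ and the DRC depth $t$ in a way that does depend delicately on $r$.

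Second, and more seriously, the claim that Subsection~\ref{sec:good subgraph} ``goes through verbatim with $\Delta=2$'' is not correct. In Lemma~\ref{lem:find good subgraph} the hypergraph $\mathcal H$ lives on the branch side $X'$; for $K'_t$ this is the $2$-uniform hypergraph $K_t$, whose maximum degree is $t-1=\Omega(\sqrt n)$, not a constant. Lemma~\ref{lem:embed hypergraph} then gives a validity probability of order $(1-2(t-1)\lambda)^t$, which is a genuine constant only if $\lambda=O(1/t^2)$; with the milder choice $\lambda=\Theta(1/(r^2t))$ needed to beat the Chernoff bound in Lemma~\ref{lem:random embedding} for the union bound over $W$, the validity probability is still $\exp(-\Theta(t/r^2))$, and one has to verify that the union bound actually closes, which is not automatic. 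More importantly, the hypothesis of Lemma~\ref{lem:embed hypergraph} now demands that all but a $\lambda^2=O(1/(r^4t^2))$ fraction of pairs in $U$ be rich — a polynomially small fraction, whereas the paper's Lemma~\ref{lem:induction step} only delivers a constant fraction via Lemma~\ref{lem:k-set drc}. Feeding this back forces $|U|$ to be polynomially smaller than $|A|$, which in turn makes $|Y|=\binom{t}{2}$ and the absorption ratio $K=|Z|/|Y|$ behave very differently than in the bounded-degree case, so the ``negligible overhead from $X$'' claim is precisely where the argument needs work. These parameter tensions can be resolved (a consistent regime exists for suitable $K=\Theta_r(1)$, $\gamma=\Theta_r(1)$ and DRC depth $\tau=\Theta_r(\log|A|)$), but resolving them is the substance of the missing proof, not bookkeeping.
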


As the proof would require several additional pages, we omit it.

We conclude by mentioning a question asked by Corsten and Mendon\c{c}a.

\begin{problem}[Corsten--Mendon\c{c}a \cite{CM21}]
    Is there a function $g:\mathbb{N}\rightarrow \mathbb{N}$ with $\lim_{n\rightarrow \infty} g(n)=\infty$ such that the following is true for all positive integers $r$ and $d$? If $\FF=\{F_1,F_2,\dots\}$ is a sequence of $d$-degenerate graphs with $|V(F_i)|=i$ and $\Delta(F_i)\leq g(i)$ for all~$i$, then $\tau_r(\FF)<\infty$.
\end{problem}

It would also be interesting to answer this question in the case where all $F_i$ are bipartite.

\bibliographystyle{abbrv}
\bibliography{bibliography}

\end{document}